\def\E{\ifmmode{\mathbb E}\else{$\mathbb E$}\fi} 
\def\N{\ifmmode{\mathbb N}\else{$\mathbb N$}\fi} 
\def\R{\ifmmode{\mathbb R}\else{$\mathbb R$}\fi} 
\def\Q{\ifmmode{\mathbb Q}\else{$\mathbb Q$}\fi} 
\def\C{\ifmmode{\mathbb C}\else{$\mathbb C$}\fi} 
\def\H{\ifmmode{\mathbb H}\else{$\mathbb H$}\fi} 
\def\Z{\ifmmode{\mathbb Z}\else{$\mathbb Z$}\fi} 
\def\P{\ifmmode{\mathbb P}\else{$\mathbb P$}\fi} 
\def\T{\ifmmode{\mathbb T}\else{$\mathbb T$}\fi} 
\def\SS{\ifmmode{\mathbb S}\else{$\mathbb P$}\fi} 
\def\DD{\ifmmode{\mathbb D}\else{$\mathbb D$}\fi} 
\newcommand{\del}{\partial}
\newcommand{\Cont}{{\operatorname{Cont}}}
\newcommand{\ben}{\begin{enumerate}}
\newcommand{\een}{\end{enumerate}}
\newcommand{\be}{\begin{equation}}
\newcommand{\ee}{\end{equation}}
\newcommand{\bea}{\begin{eqnarray}}
\newcommand{\eea}{\end{eqnarray}}
\newcommand{\beastar}{\begin{eqnarray*}}
\newcommand{\eeastar}{\end{eqnarray*}}
\newcommand{\bc}{\begin{center}}
\newcommand{\ec}{\end{center}}
\theoremstyle{theorem}
\newtheorem{thm}{Theorem}[section]
\newtheorem{cor}[thm]{Corollary}
\newtheorem{lem}[thm]{Lemma}
\newtheorem{prop}[thm]{Proposition}
\theoremstyle{definition}
\newtheorem{defn}[thm]{Definition}
\newtheorem{rem}[thm]{Remark}
\newtheorem{ques}[thm]{Question}
\newtheorem{prob}[thm]{Problem}
\newtheorem*{thm*}{Theorem}
\numberwithin{equation}{section}
\def\R{{\mathbb R}}
\def\E{{\mathbb E}}
\def\Z{{\mathbb Z}}
\def\C{{\mathbb C}}
\def\R{{\mathbb R}}
\def\P{{\mathbb P}}
\def\N{{\mathbb N}}
\def\11{{\mathbb I}}
\def\id{\text{\rm id}}
\def\C{\mathbb{C}}
\def\Z{\mathbb{Z}}
\def\T{\mathbb{T}}
\def\Q{\mathbb{Q}}
\def\E{\ifmmode{\mathbb E}\else{$\mathbb E$}\fi} 
\def\N{\ifmmode{\mathbb N}\else{$\mathbb N$}\fi} 
\def\R{\ifmmode{\mathbb R}\else{$\mathbb R$}\fi} 
\def\Q{\ifmmode{\mathbb Q}\else{$\mathbb Q$}\fi} 
\def\C{\ifmmode{\mathbb C}\else{$\mathbb C$}\fi} 
\def\H{\ifmmode{\mathbb H}\else{$\mathbb H$}\fi} 
\def\Z{\ifmmode{\mathbb Z}\else{$\mathbb Z$}\fi} 
\def\P{\ifmmode{\mathbb P}\else{$\mathbb P$}\fi} 
\def\SS{\ifmmode{\mathbb S}\else{$\mathbb P$}\fi} 
\def\DD{\ifmmode{\mathbb D}\else{$\mathbb D$}\fi} 
\def\R{{\mathbb R}}
\def\E{{\mathbb E}}
\def\Z{{\mathbb Z}}
\def\C{{\mathbb C}}
\def\R{{\mathbb R}}
\def\N{{\mathbb N}}
\def\FF{{\mathcal F}}
\def\CD{{\mathcal D}}
\def\CF{{\mathcal F}}
\def\CG{{\mathcal G}}
\def\CI{{\mathcal I}}
\def\CJ{{\mathcal J}}
\def\CL{{\mathcal L}}
\def\CM{{\mathcal M}}
\def\CO{{\mathcal O}}
\def\CP{{\mathcal P}}
\def\CP{{\mathcal P}}
\def\CS{{\mathcal S}}
\def\CZ{{\mathcal Z}}
\def\EC{\mathfrak{C}}
\def\darr#1{\raise1.5ex\hbox{$\leftrightarrow$}
\mkern-16.5mu #1}
\def\roughly#1{\raise.3ex\hbox{$#1$\kern-.75em
\lower1ex\hbox{$\sim$}}}
\def\opname#1{\mathop{\kern0pt{\rm #1}}\nolimits}
\def\dim{\opname{dim}}
\def\vol{\opname{vol}}
\def\coker{\operatorname{Coker}}
\def\Cont{\operatorname{Cont}}
\def\Diff{\operatorname{Diff}}
\def\Image{\operatorname{Image}}
\def\pr{\text{\rm pr}}
\DeclareFontFamily{U}{MnSymbolC}{}
\DeclareSymbolFont{MnSyC}{U}{MnSymbolC}{m}{n}
\DeclareFontShape{U}{MnSymbolC}{m}{n}{
    <-6>  MnSymbolC5
   <6-7>  MnSymbolC6
   <7-8>  MnSymbolC7
   <8-9>  MnSymbolC8
   <9-10> MnSymbolC9
  <10-12> MnSymbolC10
  <12->   MnSymbolC12}{}
\DeclareMathSymbol{\intprod}{\mathbin}{MnSyC}{'270}
\begin{document}
\quad \vskip1.375truein

\title{Thermodynamic reduction of contact dynamics}

\author{Hyun-Seok Do, Yong-Geun Oh}
\address{Department of Mathematics POSTECH \text{\&}
 Center for Geometry and Physics, Institute for Basic Science(IBS),}
 \email{hyunseokdo@postech.ac.kr}
\address{
Center for Geometry and Physics, Institute for Basic Science (IBS),
77 Cheongam-ro, Nam-gu, Pohang-si, Gyeongsangbuk-do, Korea 790-784
\& POSTECH, Gyeongsangbuk-do, Korea}
\email{yongoh1@postech.ac.kr}

\begin{abstract}A universal algorithm to derive a macroscopic dynamics from the microscopic
dynamical system via the averaging process and symplecto-contact reduction was
introduced by Jin-wook Lim and the author in \cite{lim-oh}. They
apply the algorithm to derive  non-equilibrium thermodynamics
from the statistical mechanics  utilizing the relative information entropy as a
generating function of the associated thermodynamic equilibrium. 
In the present paper, we apply this algorithm
to the contact Hamiltonian dynamical systems. We describe a procedure of obtaining a 
discrete set of dynamical invariants of the given contact Hamiltonian system, 
or more generally of a contact multi-Hamiltonian system in a canonical way by 
deriving a (finite-dimensional non-equilibrium) \emph{thermodynamic system}. 
We call this reduction the \emph{thermodynamic reduction} of contact dynamics. 
\end{abstract}

\thanks{This work is supported by the IBS project \# IBS-R003-D1}

\keywords{ contact structure, contact forms, contact multi-Hamiltonian systems, contact relative entropy,
contact thermodynamic reduction, contact volume, thermodynamic generating function}

\maketitle

\tableofcontents

\section{Introduction}
A dynamical system is said to be \emph{volume-preserving} or \emph{measure-preserving}
 if the volume of an open subset of the phase space  is invariant under the flow.  
Poincar\'e recurrence theorem is a celebrated theorem that holds for any such
measure-preserving system which reads as follows.

\medskip

\noindent{\bf Poincar\'e Recurrence Theorem:} \emph{
 If a flow preserves volume and has only bounded orbits, then, for each open set, 
 any orbit that intersects this open set intersects it infinitely often.}

\subsection{Contact dynamics is not measure-preserving}

It is well-known that symplectic Hamiltonian flow is measure-preserving with respect to
the canonical Liouville measure. Similarly the Reeb flow of a given contact form $\lambda$
on a contact manifold $(M,\xi)$ of dimension $2n+1$ is also measure-preserving with respect to
the canonical volume form $\lambda \wedge (d\lambda)^n$. Therefore both 
flows on a closed manifold satisfy the  Poincar\'e recurrent theorem.

On the other hand, the general contact flows $\psi_t$ of $(M,\xi)$ are not measure-preserving and hence
Poincar\'e recurrence fails for the general contact flows. While there are many
similarities in their Hamiltonian calculi, this dynamical perspective
makes a stark difference of general contact dynamics from that of either symplectic 
Hamiltonian dynamics or of Reeb dynamics. This being said, a fundamental question 
in contact dynamics is the question on how one can systematically study this dissipative aspect 
of contact dynamics in terms of the underlying \emph{contact Hamiltonian
formalism}.  This paper is the first one in a  series of papers in preparation
towards this goal.

\emph{Throughout the paper, we will assume that $M$ is orientable and an orientation
is fixed unless otherwise mentioned, except when we feel need to be emphasized.}

More precisely, let us consider a given contact manifold $(M,\xi)$ and consider 
the set of contact forms $\lambda$ i.e., those $\lambda$ satisfying $\ker \lambda = \xi$.
We define
\be\label{eq:C(M,xi)}
\mathfrak C(M,\xi) = \{\lambda \in \Omega^1(M) \mid \ker \lambda = \xi\}.
\ee
A self-diffeomorphism $\psi$ of contact manifold $(M,\xi)$ is called a \emph{contactomorphism}
if it preserves $\xi$, i.e., if $d\psi(\xi) \subset \xi$.  We denote by $\text{Cont}(M, \xi)$ the set of contactomorphisms, by $\text{Cont}_+(M, \xi)$ 
 the set of (co)orientation-preserving contactomorphisms, and by $\text{Cont}_0(M, \xi)$ 
 the identity component of $\text{Cont}_+(M,\xi)$. We emphasize that 
this definition depends only on $\xi$ and does not require presence of contact forms.
When we are given a contact form $\lambda \in \mathfrak{C}(M,\xi)$, 
which presumes the contact structure $\xi$ is coorienatable,  $\psi$ is a 
(co)orientation-preserving  contactomorphism if and only if
$$
\psi^*\lambda = f \lambda
$$
for some positive function $f = f_{(\psi;\lambda)}: M \to \R$: This is called the \emph{conformal factor} of $\psi$
relative to the contact form $\lambda$. \emph{We will always choose the coorientation compatible 
with the given orientation} so that for a positive contact form $\lambda$, we have the decomposition
$$
T_pM = \xi_p \oplus \R\langle R_\lambda(p) \rangle
$$
as an oriented vector space at any $p \in M$, where $R_\lambda$ is the Reeb vector field
associated to $\lambda$. (See Definition \ref{defn:positive-form}
for the standard definition of Reeb vector field $R_\lambda$ in contact geometry.)
The second-named author has been calling its logarithm
\be\label{eq:conformal-exponent}
g_{(\psi;\lambda)} : = \log f_{(\psi;\lambda)}
\ee
the \emph{conformal exponent} in his series of papers on contact instantons and others.
(See \cite{oh:contacton-Legendrian-bdy}, \cite{oh-yso:spectral},  in particular
for such practice.) We denote by $\mu_\lambda$ the measure associated to the volume form 
$\lambda \wedge (d\lambda)^n$.

\emph{Contact topology} is the study of contact distribution $\xi$. There are various reasons why one
 should enlarge the contact phase space to 
 the space of  the pairs $(\lambda,\psi)$ of contact forms $\lambda$ and diffeomorphisms 
 $\psi$ as two independent
information systems in the point of view of \emph{contact dynamics}, especially 
towards the thermodynamic formalism of \emph{contact dynamics}. 
(We postpone elaborate discussion on this formalism until the sequel \cite{do-oh:formalism} of the present paper. See Appendix \ref{sec:peek} for a brief peek of \cite{do-oh:formalism}.)

We denote by $\mathfrak{C}(M)$ to be the set of all nondegenerate one-forms, i.e., all
contact forms on $M$ with the associated contact distribution \emph{not specified}.

\begin{defn}[Contact form and its mass] 
Let $\mathfrak{C}(M)$ be the set of contact forms.
We define the mass function $V: \mathfrak{C}(M) \to \R$ by
$$
V(\lambda): = \int_M \lambda \wedge (d\lambda)^n.
$$
\begin{enumerate}
\item 
We denote by $\mathfrak{C}^\pm(M)$ the set of contact forms with positive and negative
masses respectively. When the mass is positive, we also call $V(\lambda)$ the \emph{volume}
of $\lambda$.
\item We say a contact form \emph{normalized} if $V(\lambda) = 1$ and define the subset
\be\label{eq:mass=1}
\mathfrak{C}^1(M): = \{ \lambda \in \mathfrak C^+(M) \mid V(\lambda) = 1 \}.
\ee
\end{enumerate}
\end{defn}
We would like to mention that the total mass $V(\lambda)$ is unchanged under the action by
diffeomorphisms by the change of variables:
\be\label{eq:diff-invariance}
V(\lambda) = V(\varphi^*\lambda)
\ee
for any orientation-preserving diffeomorphism $\varphi:M \to M$, where the orientation is 
given by the contact volume form $\lambda \wedge (d\lambda)^n$. 

\begin{defn}[Contact pair] We call a pair $(\psi,\lambda)$ a \emph{contact pair} if
they satisfy the relation $\psi^*\lambda = f \lambda$ for a nowhere 
vanishing $f$. We call it \emph{positive} if $f > 0$. 
When $f > 0$, we rename the logarithm 
$g_{\psi,\lambda} : = \log f_{\psi,\lambda}$ the \emph{thermodynamic potential}
of the pair $(\psi,\lambda)$.
\end{defn}

We refer readers to \cite{do-oh:formalism} for the justification of our renaming of
the conformal exponent $g_{(\psi,\lambda)}$ as thermodynamic potential
of the pair $(\psi,\lambda)$, or see Appendix \ref{sec:peek}.

\subsection{Contact dynamics and its thermodynamics reduction}

In \cite{lim-oh}, Lim and the present author derived non-equilibrium thermodynamics 
from the statistical mechanics as a symplecto-contact reduction. A fundamental aspect of
statistical mechanics phase space (SPS) used in that derivation is the probability distribution
space. Each local observable system
$$
\CF = \{F_1, \cdots, F_k\}
$$
then induces a system of functions on the probability distribution space $\CP(M)$
$$
\CO_\CF: = \{\langle F_i \rangle_\rho\}_{i=1}^k
$$
by considering the `observation' $\CO_{F_i}:  \CP(M) \to \R$ defined by 
$$
\CO_{F_i}(\rho): = \langle F_i \rangle_\rho = \int F_i \, \rho.
$$
Regarding this as a moment map (this is their first motto ``\emph{Observation is a moment map}'')
and applying the symplectic reduction, the system
can be reduced to the intermediate \emph{mesoscopic phase space} on which 
the \emph{reduced relative information entropy function}, denoted by
$\CS^{\text{\rm red}}_\CF$, generates a thermodynamic equilibrium.
This is the  second motto of \cite{lim-oh} which reads \emph{``Relative information entropy is 
the generating function of  a thermodynamic equilibrium.''}. 
We call the function $\CS^{\text{\rm red}}_\CF$
the \emph{thermodynamic generating function} of
the multi-Hamiltonian system $\CF$. (See \cite{lim-oh} or Section \ref{sec:reduced-entropy}
for the explanation.)

In this derivation of \cite{lim-oh}, there is
no room to accommodate one of the fundamental thermodynamic variable,  the volume $V$,
 \emph{in the microscopic level} of the given particle system. 
Only after another information system of 
the state of \emph{expandable container} is introduced, one can involve the volume
in the thermodynamics.  In the physical reality, it is natural to consider the gas confined
in an expandable container and study the change of its volume depending on the
various thermodynamic quantities.  

We will apply the above construction to an observable system defined on the contact manifolds
in the point of view of contact dynamics (on compact contact manifolds without boundary).
Then all these aspects already naturally
come into play in the canonical fashion by regarding the choice of contact forms
as another \emph{ information system} like the state of expandable containers in the ideal gas:
First, the contact form $\lambda$ associates a natural measure $\mu_\lambda$ 
induced by the top-degree form $\lambda \wedge (d\lambda)^n$ and all contactomorphisms
preserves the total mass: For any relatively compact open subset $U \subset M$, we define
\be\label{eq:measure-mulambda}
\mu_\lambda(U): = \int_U \lambda\wedge (d\lambda)^n
\ee
which canonically and uniquely extends to a measure. Secondly, we highlight the fact that
the choice of the local observable system $\{F_1, \ldots, F_N\}$ is a datum independent of
the choice of contact forms. In this regard, the pair $(\lambda, \{F_1, \dots, F_N\})$
should be regarded as the composition of two information system. We will call
the pair $(\lambda, \CF)$ a (random) \emph{contact multi-Hamiltonian system} on $M$.

\subsection{Where are we heading for?}

It was already indicated in \cite[Remark 2.2]{lim-oh} that the construction 
given therein should be applied to any smooth manifold with a reference measure 
given or in the relative context. In ibid., it has been used to derive the non-equilibrium thermodynamics
from the statistical mechanics from the first principle in a canonical way.

The main purpose of the present paper is reversed, i.e., is to go the other way around: 
we would like to utilize the aforementioned general scheme to systematically 
involve the relative information theory or the thermodynamic formalism into the 
study of contact dynamics.
Here the Kullback-Leibler divergence 
 $$
 D_{\text{\rm KL}}(\mu_{\lambda'}, \mu_\lambda)
 $$
 of the pair of measures $(\mu_{\lambda'}, \mu_\lambda)$ will
play a fundamental role in  that it
appears as the $(n+1)$-th power of \emph{conformal factor} of $\lambda'$
relative to $\lambda$ \emph{when $\lambda$ and $\lambda'$ have
the same contact structure.}  This is the starting point of our search for 
a thermodynamic formalism adapted to the contact dynamics, especially towards a
systematic study of \emph{dissipative aspect} of general contact dynamics.
The present paper is the first one towards this goal. We refer readers to
a sequel \cite{do-oh:formalism} in preparation for more details on this formalism.
For readers' convenience, we provide a sneak preview of \cite{do-oh:formalism} in Appendix 
\ref{sec:peek}.

\bigskip

\noindent{\bf Acknowledgement:}
This paper is essentially a survey paper for our ongoing joint research 
on the thermodynamic formalism \cite{do-oh:formalism} in preparation. It provides 
the general framework to and perspective on our research in \cite{do-oh:formalism}.


\section{Contact kinetic theory phase space: big phase space}

Let $M$  be a compact connected smooth manifold of dimension $2n+1$ with \emph{nonempty}
set of contact forms
$$
\EC (M)=\{\lambda \in \Omega^1(M) \mid \lambda \wedge (d\lambda)^n \text{ is nonvanishing }\}.
$$
It follows that $\EC(M)$ is an open subset of $\Omega^1(M)$ and so its tangent space 
is canonically identified with $\Omega^1(M)$ itself.  Recall that the set $\EC(M)$ is empty
\emph{unless $M$ carries a contact form}.

 When $M$ is equipped with an orientation, we also define 
$$
\mathfrak C^+(M)=\{\lambda \in  \mathfrak C(M) \mid  \lambda \wedge (d\lambda)^n  \text{
\rm is positive}\} \subset \Omega^1(M).
$$
We denote by this inclusion map by $\Upsilon^{\text{\rm big}}$ putting
\be\label{eq:big-Upsilon}
\Upsilon^{\text{\rm big}}(\lambda)  = \lambda
\ee
regarding $\mathfrak C^+(M)$ as an open subset of $\Omega^1(M)$.
We also consider the subset $\mathfrak{C}_1(M)$
consisting of normalized contact forms.
We define a measure induced by integrating the volume form $\mu_\lambda = \lambda \wedge 
(d\lambda)^n$,
and putting
$$
\mu_\lambda(U) : = \int_U \lambda \wedge  (d\lambda)^n
$$
which uniquely extends to the measure $\mu_\lambda$. One can check

\begin{lem}\label{lem:diff-actionon-mu} Assume that $M$ is oriented. For any pair 
$$
(\varphi, \lambda) \in  \Diff^+(M) \times  \mathfrak{C}^+(M)
$$
of orientation-preserving diffeomorphism and positive contact form, we have
\be\label{eq:diff-actionon-mu}
\varphi_*\mu_\lambda = \mu_{\varphi*\lambda}.
\ee
\end{lem}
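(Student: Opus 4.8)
The plan is to split the identity into two independent naturalities: the volume-form construction $\lambda \mapsto \lambda \wedge (d\lambda)^n$ commutes with pushforward of forms, and the measure attached to a top-degree form transforms correctly under pushforward. I would treat these in turn.

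First I would record the naturality at the level of forms. Since the exterior derivative commutes with pullback, it commutes with the pushforward $\varphi_* := (\varphi^{-1})^*$, and since pullback (hence pushforward) is an algebra homomorphism for the wedge product, one gets
\[
(\varphi_*\lambda)\wedge \bigl(d(\varphi_*\lambda)\bigr)^n = (\varphi_*\lambda)\wedge (\varphi_* d\lambda)^n = \varphi_*\bigl(\lambda\wedge(d\lambda)^n\bigr).
\]
Thus the volume form attached to $\varphi_*\lambda$ is precisely the pushforward of the one attached to $\lambda$; in particular, because $\varphi$ is orientation-preserving, $\varphi_*\lambda$ is again a positive contact form and $\mu_{\varphi_*\lambda}$ is a genuine positive measure.

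Next I would pass to measures via change of variables. Writing $\Omega = \lambda\wedge(d\lambda)^n$ and letting $\mu_\Omega = \mu_\lambda$ denote its induced measure, for any relatively compact open $U$ the definition of pushforward of a measure gives
\[
(\varphi_*\mu_\lambda)(U) = \mu_\lambda(\varphi^{-1}(U)) = \int_{\varphi^{-1}(U)} \Omega.
\]
Applying the change-of-variables formula to the orientation-preserving diffeomorphism $\varphi^{-1}$, this equals $\int_U (\varphi^{-1})^*\Omega = \int_U \varphi_*\Omega$, which by the first step is $\int_U (\varphi_*\lambda)\wedge(d(\varphi_*\lambda))^n = \mu_{\varphi_*\lambda}(U)$. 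Since the two measures agree on every relatively compact open set, they agree, proving \eqref{eq:diff-actionon-mu}.

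The computation is routine; the only point demanding care---rather than a genuine obstacle---is the orientation bookkeeping. The sign-free identity $\int_{\varphi^{-1}(U)}\Omega = \int_U \varphi_*\Omega$ holds exactly because $\varphi \in \Diff^+(M)$, i.e. the relevant Jacobian is positive; for an orientation-reversing $\varphi$ the measure identity would pick up a sign and fail. This is the same hypothesis that guarantees $\varphi_*\lambda \in \mathfrak C^+(M)$, and it is consistent with the diffeomorphism invariance $V(\lambda) = V(\varphi^*\lambda)$ recorded in \eqref{eq:diff-invariance}.
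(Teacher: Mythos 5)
Your proof is correct and follows essentially the same route as the paper's: the definition of the pushforward measure, the change-of-variables formula for the orientation-preserving diffeomorphism, and the compatibility of pushforward with wedge products and the exterior derivative. The only difference is presentational---you isolate the form-level naturality as a first step and make the role of the orientation hypothesis explicit, which the paper leaves implicit.
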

\begin{proof} For each relative compact open subset $U$, we evaluate
\beastar
\varphi_*\mu_\lambda(U) & =& \mu_\lambda(\varphi^{-1}(U))
=  \int_{\varphi^{-1}(U)} \lambda\wedge (d\lambda)^n \\
&= & \int_U \varphi_*(\lambda \wedge (d\lambda)^n) 
=    \int_U \varphi_*\lambda \wedge (d (\varphi_*\lambda))^n \\
& = & \int_U \mu_{\varphi_*\lambda} = \mu_{\varphi_*\lambda}(U).
\eeastar
\end{proof}
Based on this lemma, we will safely abuse the notation $\mu_\lambda$  both for 
a differential form and for the associated measure which should not confuse the readers.

We hope to apply the theory of symplectic reduction to an infinite 
dimensional symplectic manifold, the cotangent bundle of $\mathfrak{C}^+(M)$, 
$$
T^*\mathfrak{C}^+(M)
$$
as the Hamiltonian phase space \cite{lim-oh}. We denote by $(\lambda,\beta^\flat)$ 
an element thereof where $\beta^\flat$ indicates an element of $(\Omega^1(M))^*$
which is the set of  one-currents acting on $\Omega^1(M)$. 
Recall that we have a canonical embedding $\mathfrak X(M) \hookrightarrow (\Omega^1(M))^*$
given by the continuous linear funcitional 
$$
X \mapsto \left(\alpha \mapsto \int_M \alpha(X)\,d\mu_{\lambda_0}\right).
$$
(See \cite{federer}.)
Note that we have \emph{canonical} identification of its tangent space given by
$$
T_\lambda \mathfrak{C}^+(M) \cong \Omega^1(M)
$$ 
induced by the map $\alpha \mapsto \lambda + \alpha$, noting that nondegeneracy of
one-form is an open condition in $C^1$ topology which is obvious 
on compact $M$  and after taking is a suitable $C^\infty$ topology on noncompact $M$.
 So, $\mathfrak{C}^+(M)$ can be considered as an (infinite dimensional) smooth manifold
 modeled with $\Omega^1(M)$ equipped with a suitable Whitney $C^\infty$ topology
  (see \cite[Section 2.4]{Hirsch1976}), for example.

On $T^*\mathfrak{C}^+(M)$, we have the canonical Liouville 1-form $\theta$ defined by
$$
{\theta}_{(\lambda,\beta^\flat)}(v)=\beta^\flat(d\pi_{(\lambda,\beta^\flat)}(v)),
$$
where $(\lambda,\beta^\flat)\in T^*\mathfrak{C}^+(M)$, 
$v\in T_{(\lambda,\beta^\flat)}(T^*\mathfrak{C}^+(M))$, and  the canonical projection
$$
\pi: T^*\mathfrak{C}^+(M) \rightarrow \mathfrak{C}^+(M).
$$
Then $\omega_0: = - d\theta$ defines  a \textit{weak symplectic structure} on 
$T^*\mathfrak{C}^+(M)$.

\begin{defn} (Big contact kinetic theory phase space(b-CKTPS))\\
We call the cotangent bundle  $T^*\mathfrak{C}^+(M)$ equipped with its canonical symplectic structure $\omega_0$ the $\textit{big contact kinetic theory phase space}$ (b-CKTPS).    
\end{defn}

\section{Contact kinetic theory phase space: small phase space}

 Recall each contact form $\lambda$ uniquely defines the Reeb vector field $R_\lambda$
 determined by the equation
 \be\label{eq:defining-Rlambda}
 R_\lambda \intprod d\lambda = 0, \quad R_\lambda \intprod \lambda = 1.
 \ee
\begin{defn}\label{defn:positive-form} Let $(M,\xi)$ be contact manifold with coorientation. We say a contact form $\lambda$
 \emph{positive} (resp. \emph{negative}) if $R_\lambda$ is compatible with the given coorientation.
 \end{defn}
 Let $(M,\xi)$ be a coorientable compact connected contact manifold of dimension $2n+1$. 
 We denote by 
 $$
 \mathfrak C(M,\xi)
 $$
 the set of contact forms $\lambda$ of $\xi$, i.e. of those satisfying ker $\lambda=\xi$ 
 and $\lambda \wedge (d\lambda)^{n}$ is a volume form. By definition, if we fix an auxiliary 
 orientation of $M$, each $\lambda$ induces a canonical (smooth) measure induced by the volume 
 form $\lambda \wedge (d\lambda)^n$, which is independent of the choice of orientations of $M$.
We denote by
\be\label{eq:iota-lambda}
\iota_\xi: \mathfrak{C}(M,\xi) \hookrightarrow \mathfrak{C}(M)
\ee
the canonical inclusion for given contact structure $\xi$ on $M$.

\begin{lem}\label{lem:tangent-space} Assume that $(M,\xi)$ is cooriented.
The subset $\mathfrak{C}(M,\xi)$ 
forms a smooth submanifold of $\mathfrak{C}(M)$ whose tangent space at $\lambda$ is give by
\be\label{eq:Tlambda-CMxi}
T_\lambda \mathfrak{C}(M,\xi) = \{\alpha \in \Omega^1(M) \mid \alpha = h \lambda, \, h \in C^\infty(M,\R)\}.
\ee
\end{lem}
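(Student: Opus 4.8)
The plan is to realize $\mathfrak{C}(M,\xi)$ as the image of an open subset of $C^\infty(M,\R)$ under the ``multiply the reference form'' map, and to read off both the submanifold structure and the tangent space from that chart. First I would fix a reference form $\lambda_0 \in \mathfrak{C}(M,\xi)$; coorientability of $\xi$ is precisely the hypothesis guaranteeing that $\xi$ is the kernel of a globally defined $1$-form, so such $\lambda_0$ exists. The pointwise input is elementary linear algebra: for any $\lambda' \in \mathfrak{C}(M,\xi)$ the covectors $\lambda'_p$ and $(\lambda_0)_p$ share the same kernel hyperplane $\xi_p$ at each $p \in M$, hence are proportional, and the proportionality function $f := \lambda'(R_{\lambda_0})$ is smooth and nowhere vanishing (nonvanishing because $\lambda'$ is nonzero along the Reeb direction of $\lambda_0$). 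Thus every element of $\mathfrak{C}(M,\xi)$ has the form $f\lambda_0$ with $f$ nowhere vanishing.

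Conversely, I would check that the contact condition is then automatic. For $\lambda = f\lambda_0$ one computes, using $\lambda_0 \wedge df \wedge \lambda_0 = 0$ to annihilate every cross term,
\[
\lambda \wedge (d\lambda)^n = f^{n+1}\, \lambda_0 \wedge (d\lambda_0)^n,
\]
so $f$ nowhere vanishing forces $\lambda \wedge (d\lambda)^n$ to be a volume form with $\ker \lambda = \xi$. Consequently the map
\[
\Phi\colon \mathcal{U} \to \mathfrak{C}(M), \qquad \Phi(f) = f\lambda_0, \qquad \mathcal{U} := \{f \in C^\infty(M,\R) \mid f \text{ nowhere vanishing}\},
\]
is a bijection onto $\mathfrak{C}(M,\xi)$ with inverse $\alpha \mapsto \alpha(R_{\lambda_0})$, and $\mathcal{U}$ is open in $C^\infty(M,\R)$ (on compact $M$ nonvanishing is a $C^0$-open condition).

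The substantive step is to promote $\Phi$ to a chart. I would observe that $\Phi$ is the restriction to $\mathcal{U}$ of the continuous linear map $L\colon C^\infty(M,\R) \to \Omega^1(M)$, $L(f) = f\lambda_0$, which is a topological linear embedding onto the closed subspace $C^\infty(M,\R)\cdot\lambda_0 \subset \Omega^1(M)$ with continuous left inverse $\alpha \mapsto \alpha(R_{\lambda_0})$. Exhibiting this image as a split (complemented) closed subspace then shows that $\mathfrak{C}(M,\xi) = \Phi(\mathcal{U})$ is an embedded submanifold of the open set $\mathfrak{C}(M) \subset \Omega^1(M)$. For the tangent space, since $\mathcal{U}$ is open we have $T_{f_0}\mathcal{U} = C^\infty(M,\R)$, and $\Phi$ being the restriction of the linear map $L$ gives $d\Phi_{f_0}(h) = h\lambda_0$; writing $\lambda = f_0\lambda_0$ with $f_0$ nowhere vanishing and substituting $h \mapsto h/f_0$ yields
\[
T_\lambda \mathfrak{C}(M,\xi) = \{h\lambda_0 \mid h \in C^\infty(M,\R)\} = \{h\lambda \mid h \in C^\infty(M,\R)\},
\]
which is exactly \eqref{eq:Tlambda-CMxi}.

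I expect the only real obstacle to be the infinite-dimensional bookkeeping: verifying that $L$ is a topological embedding with split closed image in the chosen Whitney $C^\infty$ topology on $\Omega^1(M)$, so that the phrase ``smooth submanifold'' is justified rather than merely a set-theoretic parametrization. The pointwise proportionality, the identity for $\lambda \wedge (d\lambda)^n$, and the tangent-space computation are all routine once $\lambda_0$ is fixed.
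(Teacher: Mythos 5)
Your proof is correct and takes essentially the same route as the paper: the paper states the lemma without a formal proof, but its subsequent discussion introduces exactly your chart, namely the (non-canonical) diffeomorphism $\CI_{\lambda_0}(f) = f\lambda_0$ from nowhere-vanishing (resp.\ positive) functions onto $\mathfrak{C}(M,\xi)$, with tangent vectors identified via $h \mapsto h\lambda$. Your added details --- the pointwise proportionality $f = \lambda'(R_{\lambda_0})$, the identity $\lambda\wedge(d\lambda)^n = f^{n+1}\,\lambda_0\wedge(d\lambda_0)^n$, and the split closed-subspace argument justifying the word ``submanifold'' --- simply make explicit what the paper leaves implicit.
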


We also denote the set of positive contact forms by $\EC^+(M, \xi)$. Then we can write
any positive contact form $\lambda \in  \mathfrak C^+(M,\xi)$
as $\lambda = f\, \lambda_0$ for a unique $f \in C^{\infty}(M, \mathbb{R}_+)$ and vice versa
with $f^{n+1} = f_{\lambda ; \lambda_0}:=\frac{d \mu_\lambda}{d \mu_{\lambda_0}}$. 
We also define the subset
$$
\mathfrak C^1(M, \xi):=\{\lambda \in \mathfrak C^+(M, \xi) \mid V(\lambda)=1 \}
$$
consisting of normalized contact forms with its volume 1, i.e., $\int_M \mu_\lambda = 1$.

On $T^*\mathfrak{C}^+(M,\xi)$, we have the canonical 1-form $\theta$ defined by
$$
{\theta}_{(x,\beta)}(v)=\beta(d\pi_{(x,\beta)}(v)),
$$
where $(x,\beta)\in T^*\mathfrak{C}^+(M,\xi)$, $v\in T_{(x,\beta)}(T^*\mathfrak{C}^+(M,\xi))$, and  the canonical projection
$$
\pi: T^*\mathfrak{C}^+(M,\xi) \rightarrow \mathfrak{C}^+(M,\xi)
$$
with $\pi(x,\beta)=x$.
Then $\omega_0: = - d\theta$ defines  a \textit{weak symplectic structure} on 
$T^*\mathfrak{C}^+(M,\xi)$ (\cite[Section 48.2]{kriegl-michor} for the precise meaning thereof.)

\begin{defn} (Small contact kinetic theory phase space (s-CKTPS))\\
We call the cotangent bundle  $T^*\mathfrak{C}^+(M,\xi)$ equipped with its canonical 
symplectic structure $\omega_0$ the $\textit{small contact kinetic theory phase space}$ (s-CKTPS).    
\end{defn}
Now we denote by 
$$
\Upsilon^{\text{\rm sm}}_\xi: \mathfrak{C}(M,\xi) \hookrightarrow \Omega^1(M)
$$
the canonical inclusion map. Note that the map depends only on the choice of contact structure $\xi$.

Let us fix a reference contact form $\lambda_0 \in \mathfrak C(M,\xi)$ compatible with given
coorientation of $\xi$.  We recall readers that
the subset $\mathfrak C(M,\xi) \subset \Omega^1(M)$ is neither a linear subspace nor
an open subset of $\Omega^1(M)$.

This give rise to a  (non-canonical) diffeomorphism 
\be\label{eq:small-Upsilon-lambda0}
\CI_{\lambda_0} : C^\infty(M,\R_+) \to \mathfrak C^+(M,\xi) 
\ee
given by 
$$
\CI_{\lambda_0}(f) : =  f \lambda_0
 $$
 which depends on the choice of the reference form $\lambda_0$. Likewise, the set 
 $C^\infty(M,\R_+)$ is not a linear space, but its tangent space can be canonically
 identified with  $C^\infty(M,\R)$ via the map 
 $$
 h \mapsto h \lambda, \quad h := \frac{d f_t}{dt}\Big|_{t=0}
 $$
 for the germ of path $f_t$ with $f_0 = f$.
 We consider the composition
 $$
\Upsilon^{\text{\rm sm}}_{\lambda_0}: = 
\Upsilon^{\text{\rm sm}}_\xi \circ \CI_{\lambda_0}: C^\infty(M,\R_+) \to \Omega^1(M)
$$ 
so that we have the commutative diagram
$$
\xymatrix{C^\infty(M,\R_+) \ar[r]^{\Upsilon^{\text{\rm sm}}_{\lambda_0}}\ar[d]_{\CI_{\lambda_0}}  
& \Omega^1(M) \\
\mathfrak{C}(M,\xi) \ar[r]_{\iota_\xi} \ar[ur]_{\Upsilon^{\text{\rm sm}}_\xi}& \mathfrak{C}(M) \ar[u]_{\Upsilon^{\text{\rm big}}}
}
$$
We have the equalities
$$
\Upsilon^{\text{\rm sm}}_\xi = \Upsilon^{\text{\rm big}} \circ \iota_\xi
= \Upsilon^{\text{\rm sm}}_{\lambda_0}\circ  (\CI_{\lambda_0})^{-1}.
$$
In this way, we can remove $\lambda_0$-dependence in the description of of the tangent space 
$T_\lambda \mathfrak{C}^{\text{\rm sm}}(M,\xi)$ 
at $\lambda \in \mathfrak{C}^{\text{\rm sm}}(M,\xi)$.

\section{Moment maps and symplectic reductions}

The discussion about the \emph{infinite dimensional}
manifold $T^*\mathfrak{C}^+(M)$ in the present section and the next few
is largely in the formal level. However for those who feel uneasy of
this formal discussion, there is a way of making the discussion of this section rigorous 
by the procedure described in the following remark.

\begin{rem}\label{rem:strengthification} There is a canonical way of defining the weak symplectic manifold 
$(T^*\mathfrak C^+(M), \omega_0)$ by considering the weaker notion of  `dual space'  consisting of \emph{bounded
linear functionals} instead of `\emph{continuous functionals} in the sense of
\emph{locally convex topological vector spaces}, and this is described in
\cite[Section 48.2]{kriegl-michor}, for example. 
In this weaker notion of dual space, the model space of $\mathfrak{C}^+(M)$ 
is reflexive since it is a Fr\'echet \emph{Montel} space. 
(We refer to \cite[Results 6.5 \& 52.26]{kriegl-michor} for some relevant discussion and other
additional materials.)
Thanks to the reflexivity of $\mathfrak{C}^+(M)$ in this sense, $\omega_0$ becomes a (strong) symplectic structure on $T^*\mathfrak C^+(M)$ by \cite[Section 48.3.]{kriegl-michor}. 
 But we prefer not to talk much about these subtle treatments of 
 weak symplectic structure here but postpone elsewhere,  
 because the associated functional analysis issues 
 will dilute the main theme of the paper.
\end{rem}

\begin{defn}
Let $(P, \omega)$ be a symplectic manifold and $\Phi: G \times P \rightarrow $ a symplectic action of a (finite dimensional) Lie group $G$ on $P$; that is, for each $g \in G$, the map $\Phi_g: P \rightarrow P$ is symplectic. We say that a (smooth) map
$$
\mathcal{J} : P \rightarrow \mathfrak{g}^* \; (\text{the dual of the Lie algebra}\, \mathfrak{g} \,  \text{of} \,G)
$$
is a \textbf{moment map} if for every $\zeta \in \mathfrak{g}$, 
$$
d\langle \CJ, \zeta\rangle=\zeta_P \intprod \omega
$$
where $\langle \CJ, \zeta \rangle$ is an $\R$-valued function
defined via the canonical pairing between $\mathfrak g$ and $\mathfrak g^*$.
By definition, its Hamiltonian vector field is the infinitesimal generator 
 $\zeta_P$ corresponding to $\zeta$ under the action  $\Phi$.
 \end{defn}
 
Recall  that not every symplectic action has a moment map.

\begin{defn}
Let $\Phi: G \times P \to P$ be a symplectic action of $G$ on $P$. 
Assume that $\mathcal{J}: P \rightarrow \mathfrak{g}^*$ is a moment map for $\Phi$.\\
We say $\mathcal{J}$ is an $\text{Ad}^*$-equivariant moment map for the action $\Phi$ if 
$$
\langle \mathcal{J}(\Phi_g(x)),\zeta \rangle 
= \langle \mathcal{J}(x),\text{Ad}_{g^{-1}} \zeta \rangle 
=: \langle \text{Ad}_{g}^*\,\mathcal{J}(x) , \zeta \rangle
$$
for every $g\in G$, $x \in P$, and $\zeta \in \mathfrak{g}$.
\end{defn}

Finally, we recall the \textit{symplectic reduction} \cite{marsden-weinstein} which is usually 
applied to the finite dimensional case, but its formal construction can be applied to
the infinite dimensional case, ignoring the functional analytical issues arising from
the infinite dimensional situation.

\begin{thm}[Marsden-Weinstein]
Let $(P,\omega)$ be a symplectic manifold on which a (finite dimensional) Lie group $G$ acts symplectically and let $\mathcal{J}: P \rightarrow \mathfrak{g}^*$ be an $\text{Ad}^*$-equivariant moment map for this action. Assume $\mu \in \mathfrak{g}^*$ is a regular value of $\mathcal{J}$ and that the isotropy group $G_{\mu}:=\{g \in G : \text{Ad}_g^*\, \mu = \mu\}$ acts freely and properly on $\mathcal{J}^{-1}(\mu)(\neq \emptyset)$. Then $P_\mu:=\mathcal{J}^{-1}(\mu)/G_{\mu}$ has a unique symplectic form $\omega_{\mu}$ with the property
$$
\phi_{\mu}^* \omega_{\mu}=i_{\mu}^* \omega
$$
where $\phi_{\mu}: \mathcal{J}^{-1}(\mu) \rightarrow P_\mu$ is the canonical projection and $i_{\mu}: \mathcal{J}^{-1}(\mu) \rightarrow P$ is the inclusion.
\end{thm}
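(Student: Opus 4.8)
The plan is to follow the classical reduction argument of \cite{marsden-weinstein}, whose three pillars are the submanifold structure of $\CJ^{-1}(\mu)$, the identification of $\ker d\CJ$ as an $\omega$-orthogonal complement forced by the moment map equation, and a piece of linear symplectic algebra that yields nondegeneracy of the descended form. First I would set up the smooth framework. Since $\mu$ is a regular value, $\CJ^{-1}(\mu)$ is a smooth submanifold of $P$ with $T_x\CJ^{-1}(\mu) = \ker d\CJ_x$. By $\operatorname{Ad}^*$-equivariance the group $G_\mu$ preserves the level set: for $g \in G_\mu$ and $x \in \CJ^{-1}(\mu)$ one has $\CJ(\Phi_g(x)) = \operatorname{Ad}^*_g \CJ(x) = \operatorname{Ad}^*_g \mu = \mu$. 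Because $G_\mu$ acts freely and properly on $\CJ^{-1}(\mu)$, the quotient $P_\mu = \CJ^{-1}(\mu)/G_\mu$ is a smooth manifold and $\phi_\mu$ is a surjective submersion (indeed a principal $G_\mu$-bundle), so $\phi_\mu^*$ is injective on differential forms; this last fact immediately gives \emph{uniqueness} of $\omega_\mu$ once existence is established.

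The heart of the construction is the moment map identity. Writing $\mathfrak{g}\cdot x := \{\zeta_P(x) : \zeta \in \mathfrak{g}\}$ for the tangent space to the $G$-orbit, the defining relation $d\langle\CJ,\zeta\rangle = \zeta_P\intprod\omega$ gives, for every $v \in T_xP$,
\[
\langle d\CJ_x(v),\zeta\rangle = \omega(\zeta_P(x), v),
\]
so that $v \in \ker d\CJ_x$ if and only if $v$ is $\omega$-orthogonal to every orbit direction; that is, $T_x\CJ^{-1}(\mu) = (\mathfrak{g}\cdot x)^\omega$. To descend $i_\mu^*\omega$ I would verify that it is $G_\mu$-basic. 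Invariance is clear since the symplectic action gives $\Phi_g^*\omega = \omega$ and $i_\mu$ is $G_\mu$-equivariant. For horizontality, take $\zeta \in \mathfrak{g}_\mu$ and $v \in T_x\CJ^{-1}(\mu)$; then $\zeta_P(x) \in \mathfrak{g}\cdot x$ while $v \in (\mathfrak{g}\cdot x)^\omega$, whence $(i_\mu^*\omega)(\zeta_P(x), v) = \omega(\zeta_P(x),v) = 0$. Thus $i_\mu^*\omega$ is invariant and annihilates the vertical directions, so it is the pullback of a unique $2$-form $\omega_\mu$ on $P_\mu$ with $\phi_\mu^*\omega_\mu = i_\mu^*\omega$. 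Closedness descends as well, since $\phi_\mu^* d\omega_\mu = d\,i_\mu^*\omega = i_\mu^* d\omega = 0$ and $\phi_\mu^*$ is injective.

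The step I expect to be the main obstacle is \emph{nondegeneracy}. At a point $x$, the null space of $i_\mu^*\omega$ consists of those $v \in (\mathfrak{g}\cdot x)^\omega$ that are simultaneously $\omega$-orthogonal to all of $(\mathfrak{g}\cdot x)^\omega$, i.e.\ $v \in (\mathfrak{g}\cdot x)^\omega \cap \big((\mathfrak{g}\cdot x)^\omega\big)^\omega$. In the finite-dimensional (strong) symplectic case the double-orthogonal identity $\big(W^\omega\big)^\omega = W$ collapses this to $v \in \mathfrak{g}\cdot x \cap (\mathfrak{g}\cdot x)^\omega$. A short computation using equivariance, namely $d\CJ_x(\eta_P(x)) = \operatorname{ad}^*_\eta\mu$, then shows that $\zeta_P(x) \in (\mathfrak{g}\cdot x)^\omega$ holds precisely when $\langle\mu,[\eta,\zeta]\rangle = 0$ for all $\eta$, that is $\operatorname{ad}^*_\zeta\mu = 0$, i.e.\ $\zeta \in \mathfrak{g}_\mu$. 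Combined with freeness of the action (so $\zeta \mapsto \zeta_P(x)$ is injective), this identifies $\mathfrak{g}\cdot x \cap (\mathfrak{g}\cdot x)^\omega = \mathfrak{g}_\mu\cdot x = T_x(G_\mu\cdot x)$, exactly the vertical space. Hence every null vector of $i_\mu^*\omega$ is vertical, so $\omega_\mu$ is nondegenerate.

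I would flag explicitly that it is precisely the double-orthogonal identity $\big(W^\omega\big)^\omega = W$ that can fail for a merely \emph{weak} symplectic form, so the nondegeneracy argument is the one sensitive point in transporting this classical statement to the infinite-dimensional b-CKTPS and s-CKTPS. In that setting one recovers the identity only after the reflexivity/strengthening discussed in Remark \ref{rem:strengthification}; invoking it restores $\big(W^\omega\big)^\omega = W$ on the relevant model space and thereby the nondegeneracy of the reduced form, while all the remaining steps above are purely formal and transfer verbatim.
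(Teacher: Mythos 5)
Your proof is correct, and it is precisely the classical Marsden-Weinstein argument: the paper itself gives no proof of this statement, recalling it only by citation to \cite{marsden-weinstein}, and every step you take (level-set submanifold structure from regularity, $G_\mu$-invariance of the level set via $\operatorname{Ad}^*$-equivariance, descent of the invariant horizontal form along the principal $G_\mu$-bundle, uniqueness from injectivity of $\phi_\mu^*$, and nondegeneracy via the identification $\mathfrak{g}\cdot x \cap (\mathfrak{g}\cdot x)^\omega = \mathfrak{g}_\mu\cdot x$ using $(W^\omega)^\omega = W$) is the standard one from that source. Your closing caveat, that the double-orthogonal identity is the single point that fails for merely weak symplectic forms and hence the sensitive issue in the infinite-dimensional CKTPS setting, is also well placed: it matches exactly the paper's own hedging in Remark \ref{rem:strengthification} and its declaration that the reduction is applied there only formally.
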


\section{Multi-Hamiltonian system and relative information entropy}
\label{sec:family}

We adopt the following terminologies coming from statistical mechanics
as in \cite{lim-oh}.

\begin{defn} (Observables and observations)
We call a (smooth) function $F: M \rightarrow \mathbb{R}$ a $\textit{local observable}$
 and a collection of (smooth) functions 
$$
\mathcal{F}=\{F_1, \ldots , F_N\}
$$
a $\textit{local observable system}$.
\begin{enumerate}
\item  Each $\lambda \in \mathfrak{C}^+(M)$ with respect to which $F$ is an $L^1(\mu_{\lambda})$-function defines an $\textit{observation}$ of the local observable $F$ relative to $\lambda$ 
$$
\mathcal{O}_{F}(\lambda):=\int_M F d\mu_{\lambda}.
$$
\item We define a $\textit{collective}$ to be the map $\mathcal{O}_{\mathcal{F}}: \mathfrak{C}^+(M) \rightarrow \mathbb{R}^N$ given by 
$$
\mathcal{O}_{\mathcal{F}}=(\mathcal{O}_{F_1}, \ldots, \mathcal{O}_{F_N} ).
$$ 
\end{enumerate}
We call $\mathcal F$ a \emph{contact multi-Hamiltonian system} when a background
contact form $\lambda$ is equipped with $M$.
\end{defn}
We recall from \cite{lim-oh} that 
the relative information entropy $\mathcal{S}$ is  defined to be the observation 
associated to the $\textit{relative information density}$, which is $\textit{intrinsic}$ 
and $\textit{universal}$ in the sense that $\mathcal{S}$ 
\emph{depends only on the contact form $\lambda$ independent of other local observables}.

Recall the definition of the Kullback-Leibler divergence  in the information theory \cite{kullback-leibler}
is given by
\be\label{eq:DKL}
D_{\text{\rm KL}}(\rho'|\rho) = \int_M \log \frac{d\rho'}{d\rho}\, d\rho'.
\ee
We can apply this definition to the measure of the form $\rho = \mu_\lambda$.
In the present paper, we consider $\lambda \in \EC^+(M)$ not just those normalized
 $\lambda \in \EC_1(M)$ with no essential change of its definition, although 
$D_{KL}(\rho' | \rho )$ is usually defined for probability measures $\rho', \rho$.

\begin{defn} [Contact relative entropy] Let $\lambda, \, \lambda' \in \mathfrak{C}^+(M)$ and let
$\varphi \in \Diff^+(M)$ be an orientation-preserving diffeomorphism relative to the
orientation induced by the coorientation of $\lambda$.
\begin{enumerate}
\item 
We define the \emph{contact relative entropy} of $\lambda'$
relative to  $\lambda$ to be
$$
\mathcal{S}(\lambda' | \lambda):= D_{\text{\rm KL}}(\mu_{\lambda'}|\mu_\lambda)
$$
We denote by $\CS_\lambda(\cdot):=  \CS (\cdot | \lambda): \mathfrak{C}^+(M) \to \R$ the associated 
entropy function (relative to $\lambda)$.
\item We define the \emph{contact $\lambda$-entropy of $\varphi$} to be
$$
\mathcal{H}_{\lambda}(\varphi):=  \mathcal S(\varphi^*\lambda|\lambda) = 
\int_M  \log \left(\frac{d \mu_{\varphi^*\lambda}}{d \mu_\lambda}\right)\,
d\mu_{\varphi^*\lambda}. 
$$    
\end{enumerate}
\end{defn}
For the later purpose, we also name the assignment $\lambda \mapsto \lambda \wedge (d\lambda)^n$ 
\be\label{eq:vol}
\vol(\lambda) = \mu_\lambda.
\ee
Then we have $\CS = D_{\text{\rm KL}} \circ (\vol \times \vol)$ and $\CS_{\lambda} = D_{\text{\rm KL}}^{\mu_{\lambda}} \circ \vol$
where we put
\be\label{eq:Slambda0}
D_{\text{\rm KL}}^{\mu_{\lambda}}(\rho) = D_{\text{\rm KL}}(\rho|\mu_{\lambda}).
\ee
We mention that $D_{\text{\rm KL}}^{\mu_{\lambda}} $ is a function defined on the space of positive volume forms $\CD^+(M)$.\\

In fact, the following proposition shows that $\CS_{\lambda}$ is invariant under 
the action of the group $\Diff (M, \mu_{\lambda})$ consisting of $\mu_{\lambda}$-measure preserving 
diffeomorphisms.

\begin{prop} \label{prop:invariance-of-SS}
Let  $\lambda, \, \lambda' \in \mathfrak {C}^+(M)$. Then the following hold:
\begin{enumerate}
\item Suppose $V(\lambda) = V(\lambda') = V < \infty$. Then, the contact relative entropy $\mathcal{S}(\lambda'|\lambda)$ is nonnegative.
\item  $\CS_{\lambda}$ is invariant under the action of $\mu_\lambda$-preserving diffeomorphisms 
of $M$. 
\end{enumerate}
\end{prop}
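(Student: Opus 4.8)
The plan is to handle the two assertions separately, reducing each to a standard property of the Kullback--Leibler divergence together with the diffeomorphism invariance recorded in Lemma~\ref{lem:diff-actionon-mu}.

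For part (1) I would first rewrite the divergence in Radon--Nikodym form. Since $\mu_{\lambda'}$ and $\mu_\lambda$ are smooth positive volume measures on the compact manifold $M$, the density $p := \frac{d\mu_{\lambda'}}{d\mu_\lambda}$ is a smooth positive function, and
\[
\CS(\lambda'|\lambda) = D_{\text{\rm KL}}(\mu_{\lambda'}|\mu_\lambda) = \int_M \log p \, d\mu_{\lambda'} = \int_M p \log p \, d\mu_\lambda.
\]
Now I would invoke the elementary convexity inequality $t \log t \ge t - 1$, valid for all $t > 0$ with equality exactly at $t=1$, since $t\mapsto t\log t$ is convex and tangent to $t-1$ at $t=1$; integrating it yields
\[
\CS(\lambda'|\lambda) \ge \int_M (p-1)\, d\mu_\lambda = \int_M d\mu_{\lambda'} - \int_M d\mu_\lambda = V(\lambda') - V(\lambda).
\]
The hypothesis $V(\lambda') = V(\lambda)$ makes the right-hand side vanish, giving $\CS(\lambda'|\lambda)\ge 0$. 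Equivalently, one may normalize $\mu_\lambda$ to the probability measure $\mu_\lambda/V$ and apply Jensen's inequality to the convex function $t\mapsto t\log t$, the equal-mass condition being precisely what forces the barycenter of $p$ to equal $1$.

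For part (2), let $\varphi \in \Diff^+(M)$ be $\mu_\lambda$-preserving and let it act on the argument of $\CS_\lambda$ by pushforward, $\lambda'\mapsto \varphi_*\lambda'$, which is again a positive contact form. I would establish $\CS_\lambda(\varphi_*\lambda') = \CS_\lambda(\lambda')$ in three moves. First, Lemma~\ref{lem:diff-actionon-mu} gives $\mu_{\varphi_*\lambda'} = \varphi_*\mu_{\lambda'}$, so that $\CS_\lambda(\varphi_*\lambda') = D_{\text{\rm KL}}(\varphi_*\mu_{\lambda'} | \mu_\lambda)$. Second, the measure-preserving hypothesis $\varphi_*\mu_\lambda = \mu_\lambda$ lets me replace the reference measure, giving $D_{\text{\rm KL}}(\varphi_*\mu_{\lambda'} | \varphi_*\mu_\lambda)$. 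Third, I would use the general invariance of the divergence under a common diffeomorphism: from $\frac{d(\varphi_*\mu_{\lambda'})}{d(\varphi_*\mu_\lambda)} = \big(\frac{d\mu_{\lambda'}}{d\mu_\lambda}\big)\circ \varphi^{-1}$ and the change-of-variables identity $\int_M g\, d(\varphi_*\mu_{\lambda'}) = \int_M (g\circ\varphi)\, d\mu_{\lambda'}$, the integrand collapses back to $\log\frac{d\mu_{\lambda'}}{d\mu_\lambda}$, so that $D_{\text{\rm KL}}(\varphi_*\mu_{\lambda'}|\varphi_*\mu_\lambda) = D_{\text{\rm KL}}(\mu_{\lambda'}|\mu_\lambda) = \CS_\lambda(\lambda')$.

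I expect the main point requiring care --- rather than a genuine obstacle --- to be the bookkeeping of Radon--Nikodym derivatives under pushforward in the third move of part (2), where one must verify the transport formula for the density and apply the change of variables in the correct direction. The measure-preserving hypothesis enters exactly there: it is what upgrades the two-sided invariance $D_{\text{\rm KL}}(\varphi_*\mu_{\lambda'}|\varphi_*\mu_\lambda) = D_{\text{\rm KL}}(\mu_{\lambda'}|\mu_\lambda)$, valid for \emph{any} diffeomorphism, into invariance with the \emph{fixed} reference $\mu_\lambda$; without it only the weaker two-sided statement survives. All integrals are finite since $M$ is compact and the densities are smooth and positive, so no further analytic input is needed.
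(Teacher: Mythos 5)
Your proposal is correct, and it splits naturally into a part that matches the paper and a part that genuinely differs. For part (2) your argument is essentially the paper's: the same three moves of (i) invoking Lemma \ref{lem:diff-actionon-mu} to write $\mu_{\varphi_*\lambda'} = \varphi_*\mu_{\lambda'}$, (ii) replacing the reference measure via $\varphi_*\mu_\lambda = \mu_\lambda$, and (iii) collapsing $D_{\text{\rm KL}}(\varphi_*\mu_{\lambda'}|\varphi_*\mu_\lambda)$ back to $D_{\text{\rm KL}}(\mu_{\lambda'}|\mu_\lambda)$ by the transport formula for densities together with change of variables; your identity $\frac{d(\varphi_*\mu_{\lambda'})}{d(\varphi_*\mu_\lambda)} = \bigl(\frac{d\mu_{\lambda'}}{d\mu_\lambda}\bigr)\circ\varphi^{-1}$ is precisely the paper's $\varphi_*\bigl(\frac{d\mu_{\lambda'}}{d\mu_\lambda}\bigr)$, and your closing remark about why the measure-preserving hypothesis is indispensable is accurate. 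Part (1) is where you take a different route: the paper normalizes both measures to probability measures $\mu_\lambda/V$, $\mu_{\lambda'}/V$ and applies Jensen's inequality to the convex function $-\log x$ integrated against $\mu_{\lambda'}/V$, whereas you integrate the pointwise tangent-line inequality $t\log t \ge t-1$ against $\mu_\lambda$. Your argument is more elementary (no convexity-of-integrals input) and buys a sharper, unconditional estimate $\CS(\lambda'|\lambda) \ge V(\lambda') - V(\lambda)$, which makes transparent exactly where the equal-mass hypothesis enters and also identifies the equality case $d\mu_{\lambda'}/d\mu_\lambda \equiv 1$; the paper's Jensen computation is shorter if one is content to quote Jensen, and is the form closest to the textbook statement of nonnegativity of the Kullback--Leibler divergence. (The Jensen variant you sketch as an alternative, with the convex function $t\log t$, is a third equivalent packaging of the same convexity fact.) Both treatments use $V(\lambda') = V(\lambda) < \infty$ essentially, and both are complete.
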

\begin{proof} The first statement follows from Jensen's inequality because $-\log x$ is a
convex function: Knowing that both $\frac1V \mu_\lambda$ and $\frac1V \mu_\lambda'$
are probability measures, we derive
\beastar
- \CS_{\lambda}(\lambda') & = & 
V \int_M - \log\left(\frac{d\mu_{\lambda'}}{d\mu_{\lambda}}\right) \frac{d\mu_{\lambda'}}{V} \\
& \leq  & V  \log \left(\int_M \frac{d\mu_{\lambda}}{d\mu_{\lambda'}}\,\frac{ d\mu_{\lambda'}} {V}\right)
= V \log \left(\int_M \frac{d\mu_{\lambda}}{V} \right)\\
& = & V \log 1 = 0.
\eeastar

For the second statement, we utilize the equality from Lemma \ref{lem:diff-actionon-mu}.
We compute
\beastar
\CS_{\lambda}(\varphi_*\lambda') &=&\int_{M} \log 
\left(\frac{d\mu_{\varphi_*\lambda'}}{d\mu_{\lambda}}\right)d\mu_{\varphi_*\lambda'}
=\int_{M} \log \left(\frac{d(\varphi_* \mu_{\lambda'})}{d\mu_{\lambda}} \right) d(\varphi_* \mu_{\lambda'})\\
& = & \int_{M} \log \left(\frac{d(\varphi_* \mu_{\lambda'})}{d(\varphi_* \mu_{\lambda})} \right) d(\varphi_* \mu_{\lambda'})
= \int_{M} \log \left(\frac{d\mu_{\lambda'}}{d\mu_{\lambda}} \right) \, d\mu_{\lambda'} = 
 \CS_{\lambda}(\lambda')
\eeastar
where we use $\varphi_*\mu_{\lambda'} = \mu_{\varphi_*{\lambda'}}$ for the second equality and
the assumption $\varphi_*\mu_\lambda = \mu_\lambda$ for the third and the change of variable for the 
fourth equality: More specifically, we have
$$
\frac{d(\varphi_* \mu_{\lambda'})}{d(\varphi_* \mu_{\lambda})} =  
\varphi_*\left(\frac{d\mu_{\lambda'}}{d\mu_{\lambda}} \right)
$$
which implies the fourth equality.
\end{proof}
Motivated by this proposition, we introduce the following subset consisting of 
$\mu_\lambda$-measure preserving diffeomorphisms.

\begin{defn}[$\lambda$-incompressible diffeomorphisms] Let $\lambda \in \mathfrak{C}^+(M)$.
We call a $\mu_\lambda$-preserving diffeomorphisms a 
\emph{$\lambda$-incompressible diffeomorphism}, and denote the set by
$$
\Diff(M,\mu_\lambda) = \{\varphi \in \Diff^+(M) \mid \varphi^*\mu_\lambda = \mu_\lambda\}.
$$
\end{defn}

\begin{rem}\label{rem:small-CS}
We mention that the restriction of the relative entropy function $\CS_\lambda$ naturally 
restricts to s-CKTPS 
$$
\CS_\lambda^{\text{\rm sm}}: \mathfrak{C}^+(M,\xi) \to \R, \quad \xi=\ker \lambda
$$
which is also invariant under the action of $\Diff(M,\mu_\lambda) \cap \Cont_+(M,\xi)=$\\  $\Cont^{\text{st}}(M, \lambda)$, where $\Cont^{\text{st}}(M, \lambda)=\{\phi \in \Diff(M) \mid \phi^*\lambda=\lambda \}$.
\end{rem}

\section{Observables and regularity of observation map}
\label{sec:regularity}

To carry out the Marsden-Weinstein reduction even in the formal level, we should 
apply the reduction to the nonempty preimage of a \emph{regular value} of $\CO_\CF$. 

\subsection{Some contact Hamiltonian calculus}

Since we restrict the measures to those
$\mu_\lambda$ arising from contact forms $\lambda$, we need some preparation 
for some contact Hamiltonian calculus which is a continuation of 
\cite{dMV}, \cite{BCT} and \cite{oh:contacton-Legendrian-bdy}.

Recall \eqref{eq:defining-Rlambda} induces the decomposition
\be\label{eq:TM-decompose}
TM = \ker \lambda \oplus \R\langle R_\lambda \rangle
\ee
and hence its dual decomposition
\be\label{eq:T*M-decompose}
T^*M = (\langle R_\lambda \rangle)^\perp \oplus \xi^\perp
\ee
under which we decompose $\alpha = \alpha^\pi + \alpha(R_\lambda) \lambda$.
It is easy to see that the part $\alpha^\pi$ can be written as $\alpha^\pi = Y^\pi \intprod d\lambda$ 
for a unique  vector field $Y^\pi$ tangent to $\xi$ by the nondegeneracy of $d\lambda$ on $\xi$.

We also recall the definition of contact Hamiltonian vector field $X_F^\lambda$
whose defining equation is given by
\be\label{eq:XF-defining}
\begin{cases}
X_F^\lambda \intprod \lambda = - F\\
X_F^\lambda \intprod d\lambda = dF - R_\lambda[F] \lambda
\end{cases}
\ee
according to the sign convention of \cite{dMV}, \cite{oh:contacton-Legendrian-bdy}.
In other words, we have the decomposition 
$$
X_F^\lambda = (X_F^{\lambda})^{\pi} - F R_\lambda
$$
where the $\xi$-component $(X_F^{\lambda})^{\pi}$ is uniquely determined by the equation\\ 
$(X_F^{\lambda})^{\pi} \intprod d\lambda = dF - R_\lambda[F] \lambda$.

More generally, we have the following one-to-one correspondence between the
vector fields and the one-forms on contact manifolds in terms of the
decomposition \eqref{eq:TM-decompose} and \eqref{eq:T*M-decompose}.

\begin{lem} \label{lem:decomposition}
Let $(M,\xi)$ be equipped with a contact form $\lambda$. Then
the following holds under the the
decomposition \eqref{eq:TM-decompose} and \eqref{eq:T*M-decompose}.
\begin{enumerate}
\item Any one-form $\alpha$ can be uniquely decomposed into
$$
\alpha = h_\alpha \lambda + Y^\pi_\alpha \intprod d\lambda.
$$
When $\alpha$ is exact, i.e., $\alpha = dg$ for some real-valued function $g$, the correspondence 
becomes $dg =  R_\lambda[g]\, \lambda + X_g^\pi \intprod d\lambda$, i.e.,
$$
h_{dg} = R_\lambda[g], \quad Y^\pi_{dg} = X_g^\pi.
$$
\item  Conversely any vector field $X$ can be decomposed into
$$
X = X^\pi + \lambda(X) R_\lambda.
$$
When $X$ is a contact vector field, i.e. $X = X_h$ for some function $h$,
then we have $\lambda(X) = -h$.
\end{enumerate}
\end{lem}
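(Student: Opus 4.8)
The plan is to derive both decompositions from the single pointwise fact that $(\xi_p, d\lambda_p|_{\xi_p})$ is a symplectic vector space, together with the two normalizations $\lambda(R_\lambda) = 1$ and $R_\lambda \intprod d\lambda = 0$ from \eqref{eq:defining-Rlambda}. The only structural ingredient is that the bundle map $\xi \to \xi^*$ sending $Y \mapsto (Y \intprod d\lambda)|_\xi$ is an isomorphism (nondegeneracy of $d\lambda$ on $\xi$); everything else is bookkeeping with these identities.

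For part (1), I would first establish existence and uniqueness of the pair $(h_\alpha, Y^\pi_\alpha)$. Given $\alpha$, the scalar $h_\alpha$ is forced by pairing with the Reeb field: since $(Y \intprod d\lambda)(R_\lambda) = d\lambda(Y, R_\lambda) = 0$ for every $Y$ by \eqref{eq:defining-Rlambda}, evaluating the proposed identity at $R_\lambda$ yields $h_\alpha = \alpha(R_\lambda)$. Subtracting, the one-form $\alpha - \alpha(R_\lambda)\lambda$ annihilates $R_\lambda$, hence lies in the summand $(\langle R_\lambda \rangle)^\perp$ of \eqref{eq:T*M-decompose}, and its restriction to $\xi$ is represented by a unique $Y^\pi_\alpha \in \Gamma(\xi)$ under the isomorphism above; this gives existence and uniqueness at once. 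For the exact case $\alpha = dg$, the coefficient is $h_{dg} = dg(R_\lambda) = R_\lambda[g]$, while the remaining identity $dg - R_\lambda[g]\lambda = Y^\pi_{dg} \intprod d\lambda$ is exactly the defining relation \eqref{eq:XF-defining} for the horizontal part $X_g^\pi$ of the contact Hamiltonian vector field, so uniqueness forces $Y^\pi_{dg} = X_g^\pi$.

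Part (2) is the dual statement and even shorter. The splitting $X = X^\pi + \lambda(X) R_\lambda$ is \eqref{eq:TM-decompose} read off by applying $\lambda$: since $\lambda(X^\pi) = 0$ and $\lambda(R_\lambda) = 1$, the Reeb coefficient of any $X$ must equal $\lambda(X)$, and $X^\pi := X - \lambda(X) R_\lambda$ then lies in $\xi$. When $X = X_h$, the Reeb coefficient is $\lambda(X_h) = X_h \intprod \lambda = -h$ by the first line of \eqref{eq:XF-defining}, which is the claimed value.

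Since the argument is purely formal once $d\lambda$ furnishes the isomorphism $\xi \cong \xi^*$, there is no genuine obstacle. The only point deserving care is the compatibility of the abstract decomposition with the two special cases, i.e.\ verifying that the decompositions of an exact form and of a contact vector field reproduce $R_\lambda[g]$, $X_g^\pi$, and $-h$ respectively; this is precisely where the defining equations \eqref{eq:XF-defining} must be invoked rather than just the nondegeneracy of $d\lambda$.
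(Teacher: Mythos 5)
Your proof is correct and follows essentially the same route the paper takes: the paper states this lemma without a formal proof, but the preceding paragraphs of Section \ref{sec:regularity} give exactly your argument---read off $h_\alpha = \alpha(R_\lambda)$ using $R_\lambda \intprod d\lambda = 0$, invoke nondegeneracy of $d\lambda$ on $\xi$ to produce the unique $Y^\pi_\alpha$, and match the special cases against the defining equations \eqref{eq:XF-defining} of the contact Hamiltonian vector field. Your write-up is, if anything, more careful than the paper's sketch in checking that the abstract decomposition and the Hamiltonian special cases are reconciled by uniqueness.
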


Next we study the effect of the first variation of contact forms on the associated volume
form $\mu_\lambda$. 
First, we recall the definition of volume form $\mu_\lambda$ 
$$
\mu_\lambda = \lambda \wedge (d\lambda)^n.
$$
We consider the function $\vol: \mathfrak C^+(M) \to \Omega^{2n+1}(M)$ given by
$$
\vol(\lambda) = \mu_\lambda.
$$
The following formula will be used later in the paper.

\begin{prop}\label{prop:dvol} Let $\lambda$ be given and take an adapted 
CR-almost complex structure $J$ and consider metric $g$ 
$$
g = d\lambda(\cdot, J\cdot) + \lambda \otimes \lambda
$$
associated to the contact triad $(M,\lambda, J)$. Decompose
$\alpha = h_\alpha \lambda + Y^\pi_\alpha \intprod d\lambda$ as in Lemma \ref{lem:decomposition}.
 Then we have
\be\label{eq:dvol}
d\vol(\lambda)(\alpha) =  ((n+1) h_\alpha + \nabla \cdot Y_\alpha^\pi) \, \mu_\lambda.
\ee
\end{prop}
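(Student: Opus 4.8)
The plan is to compute the directional derivative of $\vol$ directly from its definition and then rewrite the answer using the decomposition of Lemma \ref{lem:decomposition}. Writing $\lambda_t = \lambda + t\alpha$ so that $d\lambda_t = d\lambda + t\,d\alpha$, and differentiating $\lambda_t \wedge (d\lambda_t)^n$ at $t=0$, the Leibniz rule gives
\be
d\vol(\lambda)(\alpha) = \alpha \wedge (d\lambda)^n + n\, \lambda \wedge d\alpha \wedge (d\lambda)^{n-1}.
\ee
Thus the proof reduces to evaluating these two terms on $\alpha = h_\alpha \lambda + Y^\pi_\alpha \intprod d\lambda$.

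The first ingredient is the vanishing identity $(Y^\pi_\alpha \intprod d\lambda) \wedge (d\lambda)^n = 0$. I would obtain this by contracting the identically vanishing form $(d\lambda)^{n+1}$ (zero since its degree $2n+2$ exceeds $\dim M = 2n+1$) with $Y^\pi_\alpha$: because $\intprod$ is an antiderivation and $d\lambda$ has even degree, $Y^\pi_\alpha \intprod (d\lambda)^{n+1} = (n+1)(Y^\pi_\alpha\intprod d\lambda)\wedge (d\lambda)^n$. With this the $Y^\pi_\alpha$-part of $\alpha \wedge (d\lambda)^n$ drops out and only $h_\alpha\,\lambda \wedge (d\lambda)^n = h_\alpha\,\mu_\lambda$ survives. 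In the second term I would expand $d\alpha = dh_\alpha \wedge \lambda + h_\alpha\, d\lambda + d(Y^\pi_\alpha \intprod d\lambda)$: the $dh_\alpha\wedge\lambda$ piece dies against the leading $\lambda$, the $h_\alpha\, d\lambda$ piece contributes a further $n\,h_\alpha\,\mu_\lambda$, and these combine to $(n+1)h_\alpha\,\mu_\lambda$.

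The remaining and principal task is to identify
\be
n\, \lambda \wedge d(Y^\pi_\alpha \intprod d\lambda) \wedge (d\lambda)^{n-1} = (\nabla \cdot Y^\pi_\alpha)\,\mu_\lambda.
\ee
Here I would compute the Lie derivative $\CL_{Y^\pi_\alpha}\mu_\lambda$ through Cartan's formula. Since $\lambda(Y^\pi_\alpha)=0$ one has $\CL_{Y^\pi_\alpha}\lambda = Y^\pi_\alpha \intprod d\lambda$ and $\CL_{Y^\pi_\alpha} d\lambda = d(Y^\pi_\alpha \intprod d\lambda)$; applying the vanishing identity once more to kill the $(\CL_{Y^\pi_\alpha}\lambda)\wedge(d\lambda)^n$ term, one finds $\CL_{Y^\pi_\alpha}\mu_\lambda = n\,\lambda \wedge d(Y^\pi_\alpha\intprod d\lambda)\wedge (d\lambda)^{n-1}$, which is exactly the left-hand side above.

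The one nontrivial input, and what I expect to be the main obstacle, is matching this volume-form divergence with the Riemannian divergence $\nabla\cdot$ of the contact triad metric $g$ appearing in the statement. By definition $\CL_{Y^\pi_\alpha}\mu_\lambda = (\operatorname{div}_{\mu_\lambda} Y^\pi_\alpha)\,\mu_\lambda$, whereas $\nabla\cdot$ is taken with respect to the Riemannian volume $dV_g$; the two divergences agree precisely when $\mu_\lambda$ is a \emph{constant} multiple of $dV_g$. I would verify this by choosing a local $g$-orthonormal frame adapted to $TM = \xi \oplus \R\langle R_\lambda\rangle$: since $R_\lambda$ is a unit vector $g$-orthogonal to $\xi$ and $g|_\xi = d\lambda(\cdot, J\cdot)$ is the metric compatible with the symplectic form $d\lambda|_\xi$ and the complex structure $J$, the Riemannian volume of $\xi$ equals $(d\lambda)^n/n!$, so that $\mu_\lambda = \lambda\wedge (d\lambda)^n = n!\, dV_g$. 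As the factor $n!$ is constant, $\operatorname{div}_{\mu_\lambda} = \nabla\cdot$, which completes the identification and hence the proposition.
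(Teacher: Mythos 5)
Your proposal is correct and follows essentially the same route as the paper: linearize $\vol$ along $\lambda_t = \lambda + t\alpha$, substitute the decomposition of Lemma \ref{lem:decomposition}, collect the $h_\alpha$-terms into $(n+1)h_\alpha\,\mu_\lambda$, and use Cartan calculus to recognize the remaining $Y^\pi_\alpha$-contribution as $\CL_{Y^\pi_\alpha}\mu_\lambda = (\nabla\cdot Y^\pi_\alpha)\,\mu_\lambda$. The only differences are organizational: the paper carries the term $(Y^\pi_\alpha \intprod d\lambda)\wedge(d\lambda)^n$ along and cancels it at the end against $-(\CL_{Y^\pi_\alpha}\lambda)\wedge(d\lambda)^n$ instead of killing it upfront with your contraction of $(d\lambda)^{n+1}=0$, and you supply an explicit justification (via $\mu_\lambda = n!\,dV_g$ for the triad metric) of the final identification of the $\mu_\lambda$-divergence with $\nabla\cdot$, a step the paper asserts without comment.
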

\begin{proof} We compute
\beastar
d\vol(\lambda)(\alpha) & = & \alpha \wedge (d\lambda)^n + n \lambda \wedge d\alpha \wedge (d\lambda)^{n-1}\\
& = & h_\alpha \lambda \wedge (d\lambda)^n + (Y^\pi_\alpha \intprod d\lambda) \wedge (d\lambda)^n
\nonumber \\
&{}& + n \lambda \wedge d (Y^\pi_\alpha \intprod d\lambda) \wedge (d\lambda)^{n-1}+ n \lambda \wedge (h_{\alpha} d\lambda) \wedge (d\lambda)^{n-1} \\
& = & (n+1) h_\alpha \mu_\lambda + (Y^\pi_\alpha \intprod d\lambda) \wedge (d\lambda)^n
+ n \lambda \wedge  \CL_{Y^\pi_\alpha} d\lambda \wedge (d\lambda)^{n-1} \\
& = & (n+1) h_\alpha \mu_\lambda + (Y^\pi_\alpha \intprod d\lambda) \wedge (d\lambda)^n
 + \lambda \wedge  \CL_{Y^\pi_\alpha}  (d\lambda)^n \\
& = & (n+1) h_\alpha \mu_\lambda + (Y^\pi_\alpha \intprod d\lambda) \wedge (d\lambda)^n \\
&{}& + \CL_{Y^\pi_\alpha} (\lambda \wedge (d\lambda)^n)
- (\CL_{Y^\pi_\alpha} \lambda)\wedge (d\lambda)^n \\
& = & (n+1) h_\alpha \mu_\lambda 
+ \CL_{Y^\pi_\alpha} (\lambda \wedge  (d\lambda)^n) \\
& = & ((n+1) h_\alpha + \nabla \cdot Y_\alpha^\pi) \, \mu_\lambda.
\eeastar
\end{proof}
For the simplicity of notation, we will often write 
$$
d\vol(\lambda)(\alpha) =: \delta_\alpha \mu_\lambda.
$$

\subsection{A regularity criterion of the observation on b-CKTPS }\hspace{5pt}

For the  purpose of reduction of big phase space, 
we recall that the set of contact forms is an open subset of the set of one-forms on $M$,
if there is one on $M$. Therefore we have the identification
$$
T_\lambda \mathfrak{C}^+(M) \cong \Omega^1(M).
$$
We can regard $\alpha \in \Omega^1(M)$ as a tangent vector in 
$T_\lambda \mathfrak{C}^+(M)$. 
And we study the submersion property via the standard method of the Fredholm alternative.

In this regard, we prove the following proposition which was mentioned in \cite{lim-oh}
in the general context of probability measures. 

\begin{prop}\label{prop:regularity} Suppose that $\CF$ has the property that 
the set $\{F_1, \cdots, F_N\}$ is linearly independent. Then 
$\CO_\CF$ is a submersion, i.e., $d\CO_\CF(\lambda)$ is surjective at all $\lambda$.
\end{prop}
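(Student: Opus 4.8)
The plan is to exploit the fact that the target $\R^N$ is \emph{finite}-dimensional, which reduces surjectivity of $d\CO_\CF(\lambda)$ to the linear independence of the $N$ differentials $d\CO_{F_1}(\lambda), \dots, d\CO_{F_N}(\lambda)$ viewed as functionals on $\Omega^1(M)$. Indeed, a linear map into $\R^N$ has full image precisely when no nonzero covector of $\R^N$ annihilates its image; so it suffices to show that whenever $c = (c_1, \dots, c_N) \in \R^N$ satisfies $\sum_{i} c_i \, d\CO_{F_i}(\lambda)(\alpha) = 0$ for every $\alpha \in \Omega^1(M)$, then $c = 0$. This is the transpose-injectivity formulation of the Fredholm-alternative method alluded to before the statement, and the finite dimensionality of the target is exactly what spares us any genuine functional-analytic work.

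First I would compute the differential of the observation. Since $\CO_{F_i}(\lambda) = \int_M F_i \, \mu_\lambda$ and $T_\lambda \mathfrak{C}^+(M) \cong \Omega^1(M)$, differentiating under the integral sign and invoking the first-variation formula of Proposition \ref{prop:dvol} gives
$$
d\CO_{F_i}(\lambda)(\alpha) = \int_M F_i \, \delta_\alpha \mu_\lambda = \int_M F_i \bigl((n+1)\, h_\alpha + \nabla \cdot Y_\alpha^\pi\bigr)\, d\mu_\lambda,
$$
where $\alpha = h_\alpha \lambda + Y_\alpha^\pi \intprod d\lambda$ is the decomposition of Lemma \ref{lem:decomposition}.

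The key step is then to set $G := \sum_{i} c_i F_i$ and to test the annihilation condition only against the \emph{conformal} variations $\alpha = h\lambda$, for which $h_\alpha = h$ and $Y_\alpha^\pi = 0$. For these, the condition collapses to
$$
(n+1) \int_M G \, h \, d\mu_\lambda = 0 \qquad \text{for all } h \in C^\infty(M,\R).
$$
Because $n+1 \neq 0$ and $\mu_\lambda$ is a smooth nowhere-vanishing measure, the fundamental lemma of the calculus of variations forces $G \equiv 0$; the assumed linear independence of $\{F_1, \dots, F_N\}$ then yields $c = 0$, as desired. Note that the divergence term $\nabla \cdot Y_\alpha^\pi$ never enters: the purely conformal directions already detect $G$ completely.

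The only point requiring care, and the place where a reader might object, is the legitimacy of restricting to the test variations $\alpha = h\lambda$. This is harmless because $\mathfrak{C}^+(M)$ is an \emph{open} subset of $\Omega^1(M)$, so every one-form—in particular every $h\lambda$—is an admissible tangent vector at $\lambda$; restricting to this subfamily can only strengthen the hypothesis. Thus the ``hard part'' is essentially bookkeeping: confirming that the first-variation formula applies verbatim and that testing against $h\lambda$ suffices, after which the conclusion is immediate.
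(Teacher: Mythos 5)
Your proposal is correct and follows essentially the same route as the paper's proof: compute the first variation of $\CO_{F_i}$ via Proposition \ref{prop:dvol}, reformulate surjectivity onto $\R^N$ as injectivity of the transpose (the paper's ``$L^2$-cokernel'' characterization), test only against conformal variations $\alpha = h\lambda$, and conclude $\sum_i c_i F_i = 0$, hence $c = 0$, from linear independence. If anything you are slightly more careful than the paper, whose displayed formula \eqref{eq:dCOCF} silently drops the $\nabla \cdot Y^\pi_\alpha$ term from $d\vol(\lambda)(\alpha)$ --- a harmless omission in both treatments, since the argument only uses variations with $Y^\pi_\alpha = 0$, exactly as you point out.
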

\begin{proof}
Let $\lambda \in \mathfrak{C}^+(M)$ and $F \in C^\infty(M,\R)$ be given and $\alpha = h_\alpha \, \lambda + Y^\pi_\alpha \intprod d\lambda$.
Consider the derivative
$$
d\CO_\CF (\lambda): T_\lambda \mathfrak{C}^{+}(M) \to \R.
$$
identifying $T_\lambda \mathfrak{C}^+(M)$ with $\Omega^1(M)$. Then we have
\bea\label{eq:dCOCF}
d\CO_F (\lambda)(\alpha) & = & \int_M F d\vol(\lambda)(\alpha) \nonumber \\
& = & \int_M F \left ((n+1) h_\alpha \right)\, \mu_\lambda
\eea
from Proposition \ref{prop:dvol}.
This implies that the $L^2$-cokernel $(c_1, \cdots, c_N)$ is characterized by
$$
\sum_{i=1}^N c_i \left(\int_M  F_i \left((n+1) h_{\alpha} ) \right) d\mu_\lambda\right) = 0
$$
for all $h_{\alpha}$ and $Y_{\alpha}^\pi$ whose choices one-to-one corresponds to that of 
$\alpha = h_\alpha \lambda + Y^\pi_\alpha \intprod d\lambda$. 
Therefore just by considering
$\alpha$ with $Y^\pi_\alpha = 0$, the equation is reduced to
$$
\int_M \left(\sum_{i=1}^N c_i F_i  \right)h \, d\mu_\lambda = 0
$$
with $h \in C^\infty(M,\R)$ for all $h$. This then implies $\sum_{i=1}^N c_i F_i = 0$ 
which implies $c_i = 0$ for all $i$.
\end{proof}

\subsection{A regularity criterion of the observation on s-CKTPS}\hspace{5pt}

The following is a universal regularity criterion for the restriction of observation map 
$\CO_\CF$ to $\mathfrak C^+(M, \xi)$
$$\CO_\CF^{\text{\rm sm}}: \EC^+(M, \xi) \rightarrow \R^N. 
$$

\begin{prop}\label{prop:regularity-small} Let $\CF = \{F_1, \cdots, F_N\}$ be an observable system that is 
linearly independent as elements of $C^\infty(M,\R)$. 
Then the map
$$
\CO_\CF^{\text{\rm sm}}: \mathfrak{C}^+(M,\xi) \to \R^N
$$
is a submersion.
\end{prop}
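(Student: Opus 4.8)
The plan is to mirror the argument for the big phase space (Proposition \ref{prop:regularity}), exploiting the fact that the only tangent directions actually used there are precisely the ones that survive the restriction to $\mathfrak{C}^+(M,\xi)$. The whole point is that the restriction discards exactly those directions that never contributed to $d\CO_\CF$ in the first place.

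First I would invoke Lemma \ref{lem:tangent-space} to identify
$$
T_\lambda \mathfrak{C}^+(M,\xi) = \{ h\lambda \mid h \in C^\infty(M,\R)\},
$$
so that every admissible variation is of the conformal form $\alpha = h\lambda$. In the decomposition $\alpha = h_\alpha \lambda + Y^\pi_\alpha \intprod d\lambda$ of Lemma \ref{lem:decomposition}, this amounts to $h_\alpha = h$ and $Y^\pi_\alpha = 0$. Feeding this into the first-variation formula of Proposition \ref{prop:dvol}, the divergence term $\nabla \cdot Y^\pi_\alpha$ vanishes and the formula collapses to
$$
d\vol(\lambda)(h\lambda) = (n+1)\, h\, \mu_\lambda,
$$
whence, differentiating under the integral sign as in \eqref{eq:dCOCF},
$$
d\CO_{F_i}^{\text{\rm sm}}(\lambda)(h\lambda) = (n+1)\int_M F_i\, h\, \mu_\lambda.
$$

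Next I would run the cokernel computation. A covector $(c_1,\dots,c_N)$ annihilates the image of $d\CO_\CF^{\text{\rm sm}}(\lambda)$ if and only if
$$
(n+1)\int_M \Big(\sum_{i=1}^N c_i F_i\Big)\, h\, \mu_\lambda = 0
$$
for every $h \in C^\infty(M,\R)$. Since $\mu_\lambda$ is a nowhere-vanishing volume form and $h$ ranges over all smooth functions, the fundamental lemma of the calculus of variations forces $\sum_{i=1}^N c_i F_i \equiv 0$; the hypothesized linear independence of $\CF$ then yields $c_i = 0$ for all $i$. Hence the cokernel is trivial and $\CO_\CF^{\text{\rm sm}}$ is a submersion.

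The only place where one might fear trouble is surjectivity itself: restricting to the smaller tangent space $\{h\lambda\}$ throws away the $Y^\pi_\alpha$-directions, so a priori one has fewer variations with which to hit $\R^N$. The resolution — and the conceptual content of the proposition — is exactly that the conformal directions already suffice, which the computation above makes manifest, the $Y^\pi_\alpha$-directions having integrated out by the divergence theorem in Proposition \ref{prop:dvol}. As in the rest of this section, the argument is carried out at the formal level, the functional-analytic subtleties being handled as in Remark \ref{rem:strengthification}.
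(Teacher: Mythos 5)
Your proposal is correct and follows essentially the same route as the paper: identify the tangent space of $\mathfrak{C}^+(M,\xi)$ with conformal variations $h\lambda$ (the paper does this via the reference form $\lambda_0$ rather than citing Lemma \ref{lem:tangent-space}, but this is immaterial), compute $d\CO_{F_i}^{\text{\rm sm}}(\lambda)(h\lambda) = (n+1)\int_M F_i\, h\, d\mu_\lambda$ as the $Y^\pi_\alpha = 0$ case of \eqref{eq:dCOCF}, and kill the $L^2$-cokernel using the linear independence of $\CF$. No gaps.
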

\begin{proof}
Fix a reference contact form $\lambda_0$ and
identify $\mathfrak{C}^+(M,\xi) \cong C^\infty(M,\R_{+} )$ by writing $\lambda = f \lambda_0$ for $f > 0$.
Then we have
$$
\CO_F^{\text{\rm sm}}(\lambda) = \int_M F \, d\mu_\lambda.
$$
Then the first variation in the small phase can be written as
$$
\delta \lambda = \alpha = h\, \lambda, \quad  h \in C^\infty(M,\R)
$$
and
$$
d\CO_{F_i}^{\text{\rm sm}}(\lambda)(\alpha) =
(n+1) \int_M F_i  h \, \mu_\lambda
$$
for $i = 1, \cdots, N$, which corresponds to the case $Y^\pi_\alpha = 0$ in (\ref{eq:dCOCF}).

We then obtain the characterization of the $L^2$-cokernel element $(c_1, \cdots, c_N)$ of $d\CO_{\CF}^{\text{\rm sm}}$
\beastar
0 & = & \sum_{i=1}^N c_i (n+1) \int_M F_i  h \lambda \wedge (d \lambda)^n\\
& = & \int_M \sum_{i=1}^N (n+1) c_i F_i  h\,  d \mu_{\lambda}
= \int_M \left( \sum_{i=1}^N (n+1) c_i F_i \right) h\, d \mu_{\lambda}
\eeastar
for all $h \in C^\infty(M, \R)$. Therefore we have obtained
$$
\sum_{i=1}^N  c_i F_i = 0.
$$
 By the standing hypothesis of linear independence of $\{F_1, \cdots, F_n\}$,
we have derived $c_i = 0$ for all $i$, and hence $\coker d\CO_{\CF}^{\text{\rm sm}}(\lambda) = \{0\}$ 
at all $\lambda
\in \mathfrak C^+(M,\xi)$. This proves that $\CO_{\CF
}^{\text{\rm sm}}$ is a submersion.
\end{proof}

\section{Reduction of CKTPS}
\label{sec:reduction}

We now perform the first stage of thermodynamic reduction by decomposing 
CKTPS with those of iso-data over the collective observation data by applying 
Marsden-Weinstein reduction.

\begin{defn} Let $\pi : T^* \mathfrak C^+(M) \rightarrow \mathfrak C^+(M) $ be the natural projection.
Then we lift each function $\mathcal{O}_{F_i}$ to $\widetilde{\mathcal{O}}_{F_i}=\mathcal{O}_{F_i} \circ \pi$ defined on
 $T^* \mathfrak C^+(M)$ and we call it the \textit{the lifted observation} of $F_i$. 
 Similarly we write $\widetilde{\mathcal{S}}_{\lambda_0}=\mathcal{S}_{\lambda_0} \circ \pi$ and call it the \textit{lifted relative information entropy}.
\end{defn}

We represent an element of $T^*\mathfrak C^+(M)$ by $(\lambda,\beta^\flat)$. Then
we have
$$
\omega_0((\alpha,X) ,(\alpha',X'))=\langle X', \alpha \rangle  - \langle  X, \alpha' \rangle
$$
 for each pair
$$
(\alpha,X) , (\alpha', X') \in T_{(\lambda,\beta^\flat)}(T^* \mathfrak {C}^+(M)) \cong 
\Omega^1(M) \oplus \Omega^1(M)^*,
$$
with respect to the natural pairing $\langle \cdot, \cdot \rangle$ between $\Omega^1(M)^*$ and
$\Omega^1(M)$.
Under the regularity conditions of the observation map such as the one given in Proposition \ref{prop:regularity}
or in Proposition \ref{prop:regularity-small}, $\{X_{\widetilde{\mathcal{O}}_{F_1}}, \ldots , X_{\widetilde{\mathcal{O}}_{F_N}}\}$ 
is linearly independent.

It is a standard fact that the vector fields are tangent to the 
fibers of $T^* \mathfrak C^+(M)$ and induces a (global) Hamiltonian flow $\phi_i^t$ with $(\phi_i^t)^*\omega_0=\omega_0$
which is nothing but the linear translation along the fiber by the one-form
$$
d\mathcal{O}_{F_i}.
$$ 
Furthermore they Poisson-commute and so define a (symplectic) $\mathbb{R}^N$-action on $T^* \mathfrak C^+(M)$ by
\bea
&\mathbb{R}^N \times T^* \mathfrak C^+(M) \; \rightarrow \; T^* \mathfrak C^+(M)\\
&((t_1,\ldots , t_N) , (\lambda,\beta^\flat)) \mapsto 
(\phi_1^{t_1}\circ \ldots \circ \phi_N^{t_N})(\lambda,\beta^\flat).
\eea
Moreover, when the regularity hypothesis of $\CF$ such as one given in 
Proposition \ref{prop:regularity} is satisfied, we can identify 
$$
\CG_{\CF}:=\{\phi_1^{t_1}\circ \ldots \circ \phi_N^{t_N} : (t_1, \ldots , t_N) \in \mathbb{R}^N \} \cong \R^N.
$$

Finally, we recall the following standard fact (See \cite{abraham-marsden}.)
The moment map 
$$
\mathcal{J}_{\mathcal{F}} : T^* \mathfrak C^+(M) \rightarrow \mathfrak{g}_{\mathcal{F}}^*
$$
of the action of $\CG_\CF$ on $T^* \EC^+(M)$
 is characterized by the formula
$$
\langle \mathcal{J}_{\mathcal{F}}(\lambda,\beta^\flat), X_{{\widetilde{\mathcal{O}}_{F_i}}} \rangle 
=\mathcal{O}_{F_i}(\lambda), \quad  i=1, \ldots, N
$$
for $(\lambda,\beta^\flat) \in T^* \mathfrak C^+(M)$. It is also an $Ad^*$-equivalent moment map for
$\CG_{\CF}$.  

Therefore we have obtained the reduced space
$$
\CJ_{\CF}^{-1}(\mu)/\CG_{\CF}
$$
for each regular value $\mu$ of the observation map $\CO_\CF$.

At this stage, we can repeat the process performed in 
\cite[Section 5 \& 6]{lim-oh} which we duplicate in the next two sections adapted to
the current contact phase space case.

\section{A generating function of contact thermodynamic equilibrium}
\label{sec:reduced-entropy}

We start with the following definition
  
\begin{defn}[Observation data set] We denote by $B_\CF$ the image of the moment map $\CJ_\CF$
and by $B^{\circ}_\CF$ the set of its regular values. We call $B_\CF$ the \emph{observation data set}.
\end{defn}
In the circumstances where  $\CO_{\CF}$ holds is a submersion e.g., under 
the regularity condition as in Proposition \ref{prop:regularity} or in Proposition \ref{prop:regularity-small}, 
$B^{\circ}_\CF=B_\CF$ by the definition, which we will assume henceforth.

Then we have decomposition
$$
T^*\EC^+(M) = \bigcup_{\mu \in B_\CF} \{\mu\} \times \CJ_\CF^{-1}(\mu)
$$
and
$$
T^*\EC^+(M)/\CG_\CF = \bigcup_{\mu \in B_\CF} \{\mu\} \times \CJ_\CF^{-1}(\mu)/\CG_\CF.
$$
We attract readers' attention that the coadjoint isotropy group
of $\CG_\CF$  is the full group
\begin{equation}\label{eq:isotropy-group}
\CG_{\CF,\mu} = \CG_\CF
\end{equation}
for all $\mu$ and hence the reduced space at $\mu$ becomes
$$
\CJ_\CF^{-1}(\mu) /\CG_\CF = : \CM^{\CF}_{\mu}
$$
for all regular values of $\mu$. Obviously $\CJ_\CF^{-1}(\mu) = \emptyset$ unless
$\mu \in B_\CF$.

We summarize the above discussion into
\begin{cor} Let $ \mu = (\mu_1, \cdots, \mu_N)$ be a collective observation of
$\CF = \{F_1, \cdots, F_N\}$. Then the reduced space denoted by
\begin{equation}\label{eq:MmuF}
\CM_{\mu}^{\CF} : = \CJ_\CF^{-1}(\mu) /\CG_\CF\end{equation}
is a (infinite dimensional) symplectic manifold.
\end{cor}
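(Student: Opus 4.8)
The plan is to exhibit the reduced space as the output of the Marsden--Weinstein theorem recalled above, applied to $P = T^*\mathfrak C^+(M)$ with its canonical (weak) symplectic form $\omega_0$, the finite-dimensional group $G = \CG_\CF \cong \R^N$, its moment map $\CJ_\CF$, and the value $\mu$. The point worth stressing at the outset is that although $P$ is infinite-dimensional, the acting group is finite-dimensional, which is exactly the situation to which the quoted theorem formally applies.

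I would first dispose of the equivariance and isotropy hypotheses. Since $\CG_\CF\cong\R^N$ is abelian its coadjoint action is trivial, so $\CG_{\CF,\mu}=\CG_\CF$ for every $\mu$ --- this is \eqref{eq:isotropy-group} --- and the $\text{Ad}^*$-equivariance of $\CJ_\CF$ is the standard fact already recorded. Next I would verify that $\mu$ is a regular value. Expanding the characterizing identity $\langle \CJ_\CF(\lambda,\beta^\flat), X_{\widetilde{\CO}_{F_i}}\rangle = \CO_{F_i}(\lambda)$ in the basis $\{X_{\widetilde{\CO}_{F_i}}\}$ of $\mathfrak g_\CF$ identifies $\CJ_\CF$ with $\CO_\CF\circ\pi$ after $\mathfrak g_\CF^*\cong\R^N$. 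Then $d\CJ_\CF(\lambda,\beta^\flat) = d\CO_\CF(\lambda)\circ d\pi(\lambda,\beta^\flat)$ is a composite of two surjections --- $d\pi$ because $\pi$ is the cotangent projection, and $d\CO_\CF(\lambda)$ by Proposition \ref{prop:regularity} (or Proposition \ref{prop:regularity-small} in the small phase) under the linear independence hypothesis on $\CF$ --- hence is surjective everywhere, so $B_\CF^\circ = B_\CF$ and every $\mu$ is regular. Because the target $\R^N$ is finite-dimensional, the kernel of $d\CJ_\CF$ is of finite codimension and therefore splits, so $\CJ_\CF^{-1}(\mu)$ is a genuine submanifold even in this infinite-dimensional setting.

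It remains to check that $\CG_\CF$ acts freely and properly on $\CJ_\CF^{-1}(\mu)$. The commuting flows act by fiber translation, sending $(\lambda,\beta^\flat)$ to $\bigl(\lambda,\ \beta^\flat + \sum_i t_i\, d\CO_{F_i}(\lambda)\bigr)$, so a point is fixed by $(t_1,\dots,t_N)$ precisely when $\sum_i t_i\, d\CO_{F_i}(\lambda)=0$. Freeness is thus equivalent to the linear independence of $\{d\CO_{F_1}(\lambda),\dots,d\CO_{F_N}(\lambda)\}$ as one-currents, which is again the submersion statement, equivalently the linear independence of $\{X_{\widetilde{\CO}_{F_i}}\}$ noted in Section \ref{sec:reduction}. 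For properness I would use the translational structure: if $(t^{(k)}\cdot p^{(k)},\,p^{(k)})$ converges, then the base points and the translated fibers both converge, and continuity of $\lambda\mapsto d\CO_{F_i}(\lambda)$ together with the independence of these covectors lets me extract a convergent subsequence of the $t^{(k)}$; properness is especially benign here since $G$ is finite-dimensional.

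With all hypotheses verified, Marsden--Weinstein yields the unique form $\omega_\mu$ on $\CM_\mu^\CF = \CJ_\CF^{-1}(\mu)/\CG_\CF$ with $\phi_\mu^*\omega_\mu = i_\mu^*\omega_0$, which is the claim. The genuine obstacle is not any single verification but the infinite-dimensional functional analysis that makes them rigorous: that the quotient carries a smooth (convenient) manifold structure and that $\omega_\mu$ is nondegenerate in the appropriate weak sense. Following Remark \ref{rem:strengthification}, I would address these by working in the convenient-calculus framework of \cite{kriegl-michor}, where reflexivity of the Fr\'echet--Montel model space upgrades $\omega_0$ to a strong symplectic form and legitimizes the reduction; at the level of this survey I would otherwise carry the argument through formally and defer these analytic points to \cite{do-oh:formalism}.
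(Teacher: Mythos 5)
Your proposal is correct and takes essentially the same route as the paper: the corollary there is stated as a summary of Section \ref{sec:reduction}, i.e.\ the Marsden--Weinstein theorem applied to the fiber-translation $\R^N$-action $\CG_\CF$ on $T^*\mathfrak{C}^+(M)$ with the $\text{Ad}^*$-equivariant moment map $\CJ_\CF$, using the regularity of Proposition \ref{prop:regularity} (or Proposition \ref{prop:regularity-small}) and the observation \eqref{eq:isotropy-group} that $\CG_{\CF,\mu}=\CG_\CF$. Your explicit verifications of freeness, properness, and the regular-value condition only flesh out what the paper carries through formally, deferring the functional-analytic issues exactly as in Remark \ref{rem:strengthification}.
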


The projection $\CM^{\CF} \to B^{\circ}_\CF \subset {\mathfrak g}_{\CF}^*$ forms 
a \emph{symplectic fiber bundle} over
$B^{\circ}_\CF\subset \mathfrak g_{\CF}^*$. This provides a \emph{family of functions} 
on $B_\CF$
\begin{defn} 
We call the union
$$
\CM^{\CF} : = \bigsqcup_{\mu \in B^{\circ}_\CF}  \{\mu\} \times \CM^{\CF}_{\mu};
\quad \mu: = (\mu_1,\cdots, \mu_n)
$$
the \emph{$\CF$-reduced contact kinetic theory phase space} ($\CF$-reduced $\text{\rm CKTPS}$)
associated to $\CF$,
where $\CJ = \CJ_\CF$ is the moment map associated to the symmetry group
generated by the induced Hamiltonian flows on $T^*\EC^+(M)$.
\end{defn}

Recall that $\widetilde S_{\lambda_0}$ just descends to $\CJ_{\CF}^{-1}(\mu)/\CG_{\CF}$ 
because $\widetilde S_{\lambda_0}$ is independent on the fibers of $T^* \EC^+(M)$ and $G_\CF$ 
acts by fiberwise translations on the fibers of $T^* \EC^+(M)$. (See Section \ref{sec:reduction}.)

\begin{cor} [$\CF$-reduced  entropy]\label{cor:universality}
The lifted relative information entropy $\widetilde \CS_{\lambda_0} = \CS_{\lambda_0} \circ \pi: T^*\EC^+(M) \to \R$
 is universally reduced to a well-defined function
 $$
 \CS_\CF^{\text{\rm red}}: \CM^{\CF} \to \R.
 $$
 We call $\CS^{\text{\rm red}}_\CF$ the \emph{$\CF$-reduced entropy function}.
\end{cor}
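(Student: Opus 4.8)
The plan is to verify that $\widetilde\CS_{\lambda_0}$ is invariant under the symmetry group $\CG_\CF$ and then to invoke the universal property of the quotient to let it descend, first fiberwise over each regular value $\mu$ and afterwards uniformly over the whole observation data set. The only structural input required is that $\CG_\CF$ moves points of $T^*\EC^+(M)$ only within the fibers of $\pi$, together with the fact that $\widetilde\CS_{\lambda_0}$ is, by its very construction, constant along those fibers.

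Concretely, I would proceed as follows. First recall from Section \ref{sec:reduction} that each generator $\phi_i^t$ of $\CG_\CF$ acts as the fiberwise translation by the one-form $d\CO_{F_i}$; in particular $\pi \circ g = \pi$ for every $g \in \CG_\CF$. Second, since $\widetilde\CS_{\lambda_0} = \CS_{\lambda_0}\circ\pi$ factors through $\pi$, for any $g \in \CG_\CF$ one has $\widetilde\CS_{\lambda_0}\circ g = \CS_{\lambda_0}\circ \pi \circ g = \CS_{\lambda_0}\circ\pi = \widetilde\CS_{\lambda_0}$, so $\widetilde\CS_{\lambda_0}$ is $\CG_\CF$-invariant. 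Third, fix a regular value $\mu \in B^{\circ}_\CF$; by \eqref{eq:isotropy-group} the coadjoint isotropy group is the full group $\CG_{\CF,\mu} = \CG_\CF$, so $\CG_\CF$ preserves the level set $\CJ_\CF^{-1}(\mu)$ and the restriction $\widetilde\CS_{\lambda_0}|_{\CJ_\CF^{-1}(\mu)}$ remains invariant. It is therefore constant on the $\CG_\CF$-orbits and descends along the canonical projection $\phi_\mu: \CJ_\CF^{-1}(\mu)\to \CM^\CF_\mu$ to a unique function on $\CM^\CF_\mu$. Assembling these descents over $\mu \in B^{\circ}_\CF$, and using $B^{\circ}_\CF = B_\CF$ under the regularity hypotheses of Proposition \ref{prop:regularity} or Proposition \ref{prop:regularity-small}, yields the desired $\CS_\CF^{\text{\rm red}}: \CM^\CF \to \R$.

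The word \emph{universally} in the statement is exactly the observation that $\CS_{\lambda_0}$, being the observation of the intrinsic relative information density, depends only on the reference contact form $\lambda_0$ and not on the observable system $\CF$; hence the very same function descends regardless of which regular $\CF$ is chosen to perform the reduction. The genuine point to be careful about — what I would regard as the main obstacle — is the well-definedness and regularity in the infinite-dimensional formal setting: one must ensure that the fiberwise descents glue into a single function that is smooth on the bundle $\CM^\CF \to B^{\circ}_\CF$, rather than a merely pointwise family. At the formal level adopted here this is immediate from the factorization through $\pi$ and the bundle structure of $\CM^\CF$; to make the construction rigorous one works either in the strengthened weak-symplectic framework of Remark \ref{rem:strengthification} or, more comfortably, on the small phase space $T^*\EC^+(M,\xi)$, where the descent of the restricted entropy $\CS_\lambda^{\text{\rm sm}}$ of Remark \ref{rem:small-CS} proceeds by the identical argument.
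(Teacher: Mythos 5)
Your proposal is correct and takes essentially the same route as the paper, which justifies the corollary in a single sentence preceding it: $\widetilde\CS_{\lambda_0}$ is independent of the fibers of $T^*\EC^+(M)$ while $\CG_\CF$ acts by fiberwise translations, which is precisely your computation $\widetilde\CS_{\lambda_0}\circ g = \CS_{\lambda_0}\circ\pi\circ g = \CS_{\lambda_0}\circ\pi = \widetilde\CS_{\lambda_0}$ followed by descent to each reduced space $\CJ_\CF^{-1}(\mu)/\CG_\CF$. Your extra steps---checking level-set preservation via \eqref{eq:isotropy-group} and gluing the fiberwise descents over $B^{\circ}_\CF$---merely make explicit what the paper leaves implicit.
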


This construction provides a natural \emph{family of functions} over $B_\CF$, i.e.,
which we will show generates a Legendrian submanifold of $J^1B_\CF$ via the method of
\emph{generating functions}

%
%

By definition, we have the obvious commutative diagram
\be \label{eq:hatS}
\xymatrix{ \CM^\CF \ar[d]_{\pi} \ar[dr]^>>>>>>>>>{\CS_\CF^{\text{\rm red}}} \\
{\EC^+(M)}  \ar[d]_{\CO_\CF} \ar[r]_>>>>>>{\CS_{\lambda_0}}& \R
\\
{B_{\CF}}  &}.
\ee
Using this commutative diagram, 
it is easy to see that both families of $\CS_\CF^{\text{\rm red}}$ and
of $\CS_{\lambda_0}$ generate the same Legendrian submanifold by the canonical procedure 
of the generation of a Legendrian submanifold via the generating functions. 
Therefore we instead 
use the fibration $\CO_{\CF} : \EC^+(M) \rightarrow B_{\CF} \subseteq \R^N$ 
and a generating function $\CS_{\lambda_0}$ defined on $\mathfrak{C}^+(M)$.

In this setting, the fiber 
at $q \in B_\CF$ of the \emph{vertical critical set} of $\CS_{\lambda_0}$ is nothing but the intersection
$$
(d^v\CS_{\lambda_0})^{-1}(0) \cap \CO_\CF^{-1}(q).
$$
(See the local coordinate description the Lagrange multiplier in \cite[Section 8]{lim-oh}
or Section \ref{sec:description}.)

The procedure of finding a critical point of $\CS_{\lambda_0}$
can be decomposed into the two steps. We first choose
an Ehresmann connection
\be\label{eq:VH-splitting}
T\mathfrak{C}^+(M) = VT \mathfrak{C}^+(M) \oplus HT\mathfrak{C}^+(M)
\ee
of the fibration  $\CO_\CF: \mathfrak{C}^+(M) \to B_\CF$,
and decompose the differential $d\CS_{\lambda_0}(\lambda)$
into the vertical and the horizontal components 
$$
d\CS_{\lambda_0}(\lambda) = d^v\CS_{\lambda_0}(\lambda)
+ D^h\CS_{\lambda_0}(\lambda)
$$
with respect to the splitting \eqref{eq:VH-splitting}.
Recall that the vertical differential is canonically defined but the
horizontal differential of $\CS_\CF^{\text{\rm red}}$ needs the use of connection. 
In general the horizontal component $D^h\CS (\lambda)$ depends on the
connection but it will be independent thereof at the vertical critical
point $\lambda$ where $d^v\CS(\lambda) = 0$. 

Applying the standard generating function construction,
we consider the vertical critical set
$$
\Sigma_{\CS_{\lambda_0};\CF}: = \{\lambda\in {\mathfrak{C}^+(M)} \mid d^v\CS_{\lambda_0}(\lambda) = 0\}.
$$
This would become a smooth submanifold of $\mathfrak{C}^+(M)$ 
\emph{provided the standard transversality hypothesis}
\be\label{eq:transversality-dvCSCF}
d^v \CS_{\lambda_0} \pitchfork 0_{ (VT)^*\mathfrak{C}^+(M)}
\ee
should hold where $0_{ (VT)^*\mathfrak{C}^+(M)}$ is the zero section 
of $(VT)^*\mathfrak{C}^+(M) \to \mathfrak{C}^+(M)$.  For the convenience of
our exposition, we introduce the following definition where \emph{we will be somewhat
formal and  vague about the precise  description of smoothness in the setting 
of infinite dimensional situation as in the case of our current interest. To make
this discussion completely rigorous, one needs to take a suitable Banach
completion or to take the convenient setting of \cite{kriegl-michor}, whose discussion we
omit here but postpone elsewhere.}

\begin{defn}[Morse family]\label{defn:relative-Morse}
 Consider a smooth fibration $\pi: E \to B$ and a smooth function $S : E \to \R$
visualized in the diagram
\begin{equation}\label{eq:relative-Morse}
\xymatrix{E  \ar[d]^{\pi} \ar[r]^S & \R \\
B &
}
\end{equation}
We say $S$ is \emph{$\pi$-relative Morse}  with respect to the fibration $\pi: E \to B$,
if it satisfies \eqref{eq:transversality-dvCSCF}, or simply \emph{relative Morse} when no 
explicitly mentioning of the fibration is needed.
 \end{defn}
The upshot of introducing such a definition is first to ease our exposition, and
also to connect the notion of generating function 
to the setting that has recently attracted much attention in the context of derived algebraic
geometry. (See \cite{anel-calaque}, for example.) With this definition, one can restate the
method of generating functions as
`\emph{Any Morse family generates a Legendrian submanifold of the 1-jet bundle $J^1B$ of the 
base $B$}' especially when the base has finite dimension.

Now the map $\iota_{\CS_{\lambda_0}}: \Sigma_{\CS_{\lambda_0};\CF} \to J^1B_\CF$
given by
\be\label{eq:iota-CSCF} 
\iota_{\CS_{\lambda_0}}(\lambda) = \left(\CO_\CF(\lambda), D^h\CS_{\lambda_0}(\lambda),
\CS_{\lambda_0} (\lambda)\right )\in  J^1 \R^N
\ee
is well-defined. This leads us to the following definition.

\begin{defn}[Contact thermodynamic equilibrium] 
We call the subset of $J^1\R_N$ defined by
\be\label{eq:RCF}
R_{\CS_{\lambda_0};\CF} : = \Image \, \iota_{\CS_{\lambda_0}} \subset  J^1 \R^N \,
\ee
the \emph{contact thermodynamic equilibrium of a system $\CF$ on $M$.} 
\end{defn}

We will examine the transversality condition for the the multi-Hamiltonian system
$\CF=\{F_1, \cdots, F_N\}$ in Section \ref{sec:transversality}, and provide an
explicit class of such a system that satisfies the transversality \emph{ for the 
small phase space}, and then give the dimension formula and prove
the Legendrian property of $R_{\CS_{\lambda_0};\CF}$ in Appendix \ref{sec:Legendrian-property}.

We now put our construction in a categorical setting as follows.
Introduction of the following definition is useful for this purpose.

\begin{defn} For each nonnegative integer $N \in \N$, we consider the Cartesian power
$(C^\infty(M,\R))^N$. We define the subset
\beastar
\mathfrak{Disc}_N(M,\R) & : = & \{ (F_1, \cdots, F_N) \in (C^\infty(M,\R))^N \mid \\
&{}& \quad \{F_1, \cdots, F_N\} \, \text{\rm is not linearly independent}\}
\eeastar
and consider its complement denoted by
$$
(C^\infty(M,\R))^N \setminus \mathfrak{Disc}_N(M,\R) = : (C^\infty(M,\R))^N_{\text{\rm reg}}.
$$
\end{defn}
Here `$\mathfrak{Disc}$' stands for `\emph{discriminant}'. We note that the subset
$\mathfrak{Disc}_N(M,\R)$ is a stratified subvariety of 
$(C^\infty(M,\R))^N$ with  \emph{finite codimension}.
\begin{defn} Let $N$ and $(C^\infty(M,\R))^N_{\text{\rm reg}}$ be as above. We consider 
the universal observation map 
$$
\CO: \mathfrak{C}(M) \times (C^\infty(M,\R))^N_{\text{\rm reg}} \to \R^N.
$$
and the 1-jet bundle $J^1 \R^N$. Denote by
$$
\mathfrak{Leg}(J^1\R^N)
$$
the set of Legendrian (immersed) submanifolds. Then we have the natural map
$$
\mathfrak{TdR}_N: \mathfrak{C}(M) \times (C^\infty(M,\R))^N_{\text{\rm reg}} \to \mathfrak{Leg}(J^1\R^N)
$$
given by
$$
(\lambda,\CF) \mapsto R_{\CS_{\lambda_0};\CF}
$$
which we call the \emph{thermodynamic reduction of $N$-Hamiltonian systems}.
\end{defn}
We would like to regard $R_{\CS_{\lambda_0};\CF}$ as a \emph{thermodynamic invariant}
of the multi-Hamiltonian system.

The followings are natural questions to ask which we will study
elsewhere.
\begin{prob} 
\begin{enumerate}
\item Describe the contact thermodynamic equilibrium $R_{\CS_{\lambda_0};\CF}$ in terms of 
the dynamics of the contact multi-Hamiltonian system $\CF = \{F_i\}_{i=1}^N$.
\item Study the phase transition or the wall-crossing of the
thermodynamic invariant map $\mathfrak{TdR}$ or of its derived.
\item To what extent does $R_{\CS_{\lambda_0};\CF}$ encode  dynamical information of 
$\CF$?
\end{enumerate}
\end{prob}

\section{Transversality of the reduced entropy function as a generating function}
\label{sec:transversality}

This section will be occupied by the transversality condition
\eqref{eq:transversality-dvCSCF}.
We observe that all $\CS_{\lambda_0}$, $\CO_\CF$  factorize to the
corresponding maps defined on $\CD^+(M)$ via the map $\vol$. We already have
$$
\CS_{\lambda_0} = D_{\text{\rm KL}}^{\mu_{\lambda_0}}\circ \vol.
$$
We also write
$$
\CO_\CF = \CO_\CF^{\CD^+} \circ \vol.
$$
For $\CS_{\lambda_0}$, a standard calculation leads to
\be\label{eq:dSSlambda0}
d\CS_{\lambda_0}(\lambda) (\alpha) = \int_M (\log f_{\lambda;\lambda_0}+1)
d\vol(\lambda)(\alpha).
\ee

\subsection{On the small phase space}

We start with the small phase space $\mathfrak C^+(M,\xi)$
and the dimension formula of $R_{\CS_{\lambda_0};\FF}$.

On the small phase space $\mathfrak C^+(M,\xi)$, we have the first variation of the form
$$
\delta_{\alpha} \mu_\lambda = (n+1)h_i \, \mu_\lambda
$$
if we write $\alpha = \delta \lambda = h_i \lambda$.
Suppose  $\lambda \in \Sigma_{\CF;\CS_{\lambda_0}^{\text{\rm sm}}}$ i.e.,  $\lambda$ satisfies
\be\label{eq:dCO=0}
d^v\CS_{\lambda_0}^{\text{\rm sm}}(\lambda) = 0.
\ee
Regarding $d^v\CS_{\lambda_0}^{\text{\rm sm}}=: \aleph$ as a section of
the vector bundle  
$$
\pi: (VT)^*\mathfrak C^+(M,\xi)  \to \mathfrak C^+(M,\xi)
$$
whose fiber is given by 
$$
(VT)_\lambda^* \mathfrak C^+(M,\xi): =(VT_\lambda\mathfrak C^+(M,\xi))^* \cong 
(HT_\lambda\mathfrak C^+(M,\xi))^\perp.
$$
We also decompose its derivative  $D\aleph$  into
$$
D\aleph = D_h\aleph + D_v\aleph
$$
according to the decomposition of the \emph{domain} of the operator $D\aleph$
$$
T_\lambda\mathfrak C^+(M,\xi) = HT_\lambda\mathfrak C^+(M,\xi) \oplus VT_\lambda\mathfrak C^+(M,\xi), 
$$
i.e., $D_h\aleph$ and  $D_v\aleph$ are the \emph{restrictions} of $D\aleph$ to
the first and the second summands respectively. (We attract the readers' attention
to the location of the indices `v' and `h', which are subindices differently form the supindices 
of the differential $d^v\CS_{\lambda_0}$ and $d^h\CS_{\lambda_0}$ which are the 
\emph{projections}.)

We mention 
\be\label{eq:HV-isomorphism}
HT_\lambda \mathfrak C^+(M,\xi) \cong T_{\CO_\CF(\lambda)} B_\CF, \quad 
VT_\lambda \mathfrak C^+(M,\xi) \cong T_\lambda \left(\CO_\CF^{-1}\left(\CO_\CF(\lambda)\right)\right).
\ee
By definition, we have the covariant derivative
$$
\nabla_{\alpha_2}\aleph \Big|_\lambda(\alpha_1): = (D^v_{\alpha_2}\aleph)(\alpha_1).
$$
Then we can express the covariant Hessian as
$$
\text{\rm VHess}\, \CS_{\lambda_0}^{\text{\rm sm}}(\alpha_1, \alpha_2) 
: = \nabla_{\alpha_2}(d\CS_{\lambda_0}^{\text{\rm sm}})\Big|_\lambda(\alpha_1) = \nabla_{\alpha_2}\aleph \Big|_\lambda(\alpha_1), 
\quad \alpha_i = h_i \lambda
$$
as a symmetric bilinear form on $VT_\lambda \mathfrak C^+(M,\xi)$ at the zero point $\lambda$ 
of the section  $\aleph = d^v \CS_{\lambda_0}^{\text{\rm sm}}$.

\begin{lem}\label{lem:Hessian-small} Write $\alpha_2 = h_2 \lambda$ and $\alpha_1 =h_1 \lambda$. Then
$$
\text{\rm Hess}\, \CS_{\lambda_0}^{\text{\rm sm}}(\alpha_1, \alpha_2) 
= \int_M \left((n+1)(2n+1) + n (n+1)\log f_{\lambda; \lambda_0}\right) h_1 h_2\, d\mu_\lambda d\mu_\lambda
$$
\end{lem}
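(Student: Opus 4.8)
The plan is to trivialize the problem by passing to the global linear chart furnished by the reference form and then differentiating twice. Fixing $\lambda_0 \in \mathfrak C(M,\xi)$, recall from \eqref{eq:small-Upsilon-lambda0} that $\CI_{\lambda_0}\colon C^\infty(M,\R_+)\to \mathfrak C^+(M,\xi)$, $f\mapsto f\lambda_0$, is a diffeomorphism onto an open subset of the linear subspace $\{g\lambda_0\mid g\in C^\infty(M,\R)\}\subset \Omega^1(M)$. This exhibits $\mathfrak C^+(M,\xi)$ as an open subset of a vector space and hence equips it with a canonical flat, torsion-free connection; the covariant Hessian computed with this connection is the symmetric bilinear form obtained as the naive second directional derivative along straight lines $t\mapsto (g+tg')\lambda_0$. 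A tangent vector written as $\alpha=h\lambda$ corresponds, in this chart, to $g'=hf$, so the parametrization of the lemma carries an extra factor of $f$ that must be tracked to the end.

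First I would make $\CS_{\lambda_0}^{\text{\rm sm}}$ explicit. Since on $\mathfrak C^+(M,\xi)$ we have $\mu_\lambda=f^{n+1}\mu_{\lambda_0}$, hence $f_{\lambda;\lambda_0}=f^{n+1}$ and $\log f_{\lambda;\lambda_0}=(n+1)\log f$, the entropy reads
$$
\CS_{\lambda_0}^{\text{\rm sm}}(f\lambda_0)=\int_M \log f_{\lambda;\lambda_0}\,d\mu_\lambda=(n+1)\int_M f^{n+1}\log f\,\mu_{\lambda_0}.
$$
As a consistency check, its first variation along $\alpha=h\lambda$, i.e. along $g'=hf$, is
$$
d\CS_{\lambda_0}^{\text{\rm sm}}(\lambda)(\alpha)=(n+1)\int_M \big((n+1)\log f+1\big)f^{n}\,(hf)\,\mu_{\lambda_0},
$$
which one verifies agrees with \eqref{eq:dSSlambda0} combined with the small-phase formula $\delta_\alpha\mu_\lambda=(n+1)h\,\mu_\lambda$.

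The Hessian is then the second directional derivative of the single-integral functional $f\mapsto (n+1)\int_M \phi(f)\,\mu_{\lambda_0}$ with $\phi(f):=f^{n+1}\log f$. A direct computation gives
$$
\phi''(f)=f^{n-1}\big(n(n+1)\log f+(2n+1)\big),
$$
so that, writing $g'_i=h_i f$,
$$
\text{\rm Hess}\,\CS_{\lambda_0}^{\text{\rm sm}}(\alpha_1,\alpha_2)=(n+1)\int_M f^{n-1}\big(n(n+1)\log f+(2n+1)\big)(h_1 f)(h_2 f)\,\mu_{\lambda_0}.
$$
Collecting $f^{n-1}(h_1 f)(h_2 f)\,\mu_{\lambda_0}=f^{n+1}h_1h_2\,\mu_{\lambda_0}=h_1h_2\,\mu_\lambda$ and substituting $(n+1)\log f=\log f_{\lambda;\lambda_0}$ distributes the outer factor $(n+1)$ and produces exactly the coefficient $(n+1)(2n+1)+n(n+1)\log f_{\lambda;\lambda_0}$ asserted. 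Symmetry in $\alpha_1,\alpha_2$ is manifest; this is the flat covariant Hessian, and at a zero of $\aleph=d^v\CS_{\lambda_0}^{\text{\rm sm}}$ it restricts, in the vertical directions of $\CO_\CF^{\text{\rm sm}}$, to the vertical Hessian entering \eqref{eq:transversality-dvCSCF}, the covariant Hessian at such a point being connection-independent.

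I expect the only genuine obstacle to be the bookkeeping rather than the calculus: one must keep straight that the flat connection is the one in the linear coordinate $g=hf$ (so that at a vertical critical point the answer is connection-free and agrees with this naive second derivative), and one must correctly combine the two contributions to $\phi''$ — the derivative of the density $f^{n+1}$, which supplies the constant $2n+1$, and the derivative of $\log f$, which supplies the $\log f_{\lambda;\lambda_0}$ term. A misplaced factor of $f$ in the conversion $g'=hf$, or miscounting these two contributions, would corrupt both coefficients simultaneously.
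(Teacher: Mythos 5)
Your proposal is correct and takes essentially the same route as the paper: both compute the flat second variation of $\CS_{\lambda_0}^{\text{\rm sm}}$ in the linear chart $f \mapsto f\lambda_0$ furnished by the reference form, where the Hessian reduces to elementary calculus under the integral sign. The only difference is bookkeeping --- you differentiate the single composite $\phi(F) = F^{n+1}\log F$ in the form conformal factor, while the paper runs a two-parameter family through the density $f_{\lambda_{s,t};\lambda_0}$ and splits the mixed partial of $\rho\log\rho$ into $(1+\log f)\,\partial_s\partial_t f + f^{-1}\partial_s f\,\partial_t f$; both organizations of the chain rule produce the identical coefficients $n(n+1)+(n+1)^2=(n+1)(2n+1)$ and $n(n+1)\log f_{\lambda;\lambda_0}$.
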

\begin{proof} 
Let $\alpha_1, \, \alpha_2$ be two vertical variations at $\lambda$ given by 
\beastar
\alpha_1 & = & \dfrac{\del}{\del s} \Big|_{(s,t) = (0,0)} \lambda_{s,t}\\
\alpha_2 & = & \dfrac{\del}{\del t} \Big|_{(s,t) = (0,0)} \lambda_{s,t}
\eeastar
which are in $T_{\lambda}(\EC^+(M,\xi))$. We can write $\mu_{\lambda_{(s,t)}} = f_{(s,t)} \mu_{\lambda_0}$
with $f_{(s,t)} = f_{\lambda_{s,t};\lambda_0}$. 

We note $f_{(0,0)} = f$ with $\lambda = f\, \lambda_0$ and put
$$
h_1 = \frac{\del f_{(s,t)}}{\del s}\Big|_{(s,t) = (0,0)}, \quad h_2 =  \frac{\del f_{(s,t)}}{\del t}\Big|_{(s,t) = (0,0)}.
$$
Then we have  $\alpha_i = h_i \lambda$ at $\lambda \in \Sigma_{\CS_{\lambda_0};\CF}^{\text{\rm sm}}$, and
compute the formula for the (full) second variation at $\lambda$
\be\label{eq:2nd-variation-CS}
\frac{\del^2}{\del s \del t} \Big|_{(s,t) = (0,0)}\CS_{\lambda_0}^{\text{\rm sm}}(\lambda_{s,t})
= \frac{\del^2}{\del s \del t}\Big|_{(s,t) = (0,0)} \int_M  f_{(s,t)}
 \log f_{(s,t)}\, d\mu_{\lambda_0}
\ee
and
\beastar
&{}& \frac{\del^2}{\del s \del t}\Big|_{(s,t) = (0,0)} f_{(s,t)}
 \log f_{(s,t)} \\
 & =  &(1 + \log f_{(0,0)}) \frac{\del ^2 f_{(s,t)}}{\del s\del t}\Big|_{(s,t)=(0,0)} 
 + \frac{1}{f_{(0,0)}}\frac{\del f_{(s,t)}}{\del s} \frac{\del f_{(s,t)}}{\del t} \Big|_{(s,t)=(0,0)}.
 \eeastar

 After integration,  we derive
 \beastar
 \frac{\del^2}{\del s \del t} \Big|_{(s,t) = (0,0)}\CS_{\lambda_0}^{\text{\rm sm}}(\lambda_{s,t})
 & = & \int_M (1 + \log f_{(0,0)} ) \frac{\del ^2 f_{(s,t)} }{\del s\del t}\Big|_{(s,t)=(0,0)}  
 \, d\mu_{\lambda_0} \\
&{}& \quad   + \int_M \frac{1}{f_{(0,0)}}\frac{\del f_{(s,t)}}{\del s} \frac{\del f_{(s,t)}}{\del t} \Big|_{(s,t)=(0,0)}\, d\mu_{\lambda_0}\\
& = & \int_M (1 + \log f_{(0,0)} ) \frac{\del ^2 f_{(s,t)} }{\del s\del t}\Big|_{(s,t)=(0,0)}  \, d\mu_{\lambda_0} \\
&{}& \quad +
\int_M k_1 k_2  \, d\mu_{\lambda}.
 \eeastar
Recall $k_i  : = \delta_{\alpha_i} \lambda = (n+1) h_i$ for $\alpha_i = h_i \, \lambda$.
Then we obtain
$$
\frac{\del ^2 f_{(s,t)}}{\del s\del t} \Big|_{(s,t)=(0,0)}=(n+1)n f_{\lambda;\lambda_0}\, h_1 h_2.
$$ 
Combining the above, we have proved
$$
\nabla_{\alpha_2}(d\CS_{\lambda_0}^{\text{\rm sm}})\Big|_\lambda(\alpha_1)
= \int_M \left((n+1)(2n+1) + n (n+1)\log f_{\lambda; \lambda_0}\right) h_1 h_2\, d\mu_\lambda.
$$
This finishes the proof.
\end{proof}

Note that the map $D_v\aleph(\lambda)$ is nothing but the linear map
associated to the symmetric bilinear form $\text{\rm VHess}_\lambda \CS_{\lambda_0}$ and so it is a 
symmetric operator in the $L^2$ sense.
To make our discussion completely rigorous, we need to take 
a suitable completion of off-shell space $\mathfrak C(M,\xi)$.
Since this functional analytic discussion is not relevant to our main purpose, we will not elaborate it here.

\begin{prop}\label{prop:SSsm-transversality} Suppose $\CF$ satisfies 
the regularity criterion given in Proposition \ref{prop:regularity-small}. 
Suppose the germ of $\{1, F_1, \cdots, F_n\}_{i=1}^N$ at any point of M is linearly independent.
Then the map
$$
D\aleph(\lambda): T_\lambda\mathfrak C^+(M,\xi) \to VT_\lambda^* \mathfrak C^+(M,\xi) \cong 
\left(HT_\lambda \mathfrak C^+(M,\xi)\right)^\perp 
$$
has dense image.
\end{prop}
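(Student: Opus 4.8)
The plan is to prove density by showing that the $L^2(\mu_\lambda)$-orthogonal complement of the image is trivial, working at the zero $\lambda$ of the section $\aleph=d^v\CS_{\lambda_0}^{\text{\rm sm}}$ under the regularity criterion of Proposition \ref{prop:regularity-small} (so that $\CO_\CF^{\text{\rm sm}}$ is a submersion and $VT_\lambda$ has codimension $N$). I represent the codomain $VT_\lambda^*\cong(HT_\lambda)^\perp$ concretely through $L^2(\mu_\lambda)$: a tangent vector is written $\alpha=h\lambda$ with $h\in C^\infty(M)$, an element of $VT_\lambda$ is written $\psi=h_\psi\lambda$ with $\int_M F_i h_\psi\,d\mu_\lambda=0$ for all $i$ (the description of $VT_\lambda$ from Proposition \ref{prop:regularity-small}). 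First I would fix an alleged cokernel element $\psi$ and test it against $D\aleph(\lambda)(\alpha)$ for \emph{every} $\alpha\in T_\lambda\mathfrak C^+(M,\xi)$, that is over the full domain and not merely the vertical directions.

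The first step turns this pairing into an explicit integral. Extending the vertical Hessian computation of Lemma \ref{lem:Hessian-small} to all variation directions gives
$$
\langle D\aleph(\lambda)(h\lambda),\psi\rangle=\int_M w\,h\,h_\psi\,d\mu_\lambda,\qquad w:=(n+1)(2n+1)+n(n+1)\log f_{\lambda;\lambda_0},
$$
so that orthogonality of $\psi$ to the entire image forces $\int_M w\,h\,h_\psi\,d\mu_\lambda=0$ for all $h\in C^\infty(M)$, hence the pointwise identity $w\,h_\psi=0$ $\mu_\lambda$-almost everywhere. It is essential that $h$ ranges over all of $C^\infty(M)$: testing only against vertical $h$ would yield the weaker conclusion $w\,h_\psi\in\mathrm{span}\{F_1,\dots,F_N\}$, which does not by itself kill $\psi$ since $w$ is not of one sign.

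The second step locates the zero set of the weight $w$ and invokes the hypothesis. Since $\lambda$ is a vertical critical point, the relation $d^v\CS_{\lambda_0}^{\text{\rm sm}}(\lambda)=0$ together with \eqref{eq:dSSlambda0} gives the Lagrange-multiplier identity $\log f_{\lambda;\lambda_0}+1=\sum_i c_i F_i$ for constants $c_i$; substituting yields $w=(n+1)^2+n(n+1)\sum_i c_i F_i$, whence
$$
\{w=0\}=\Big\{\textstyle\sum_i c_i F_i=-\tfrac{n+1}{n}\Big\}.
$$
If this set had nonempty interior, then on that open set $\tfrac{n+1}{n}\cdot 1+\sum_i c_i F_i$ would vanish identically, a nontrivial relation (the coefficient of $1$ is nonzero) contradicting the standing hypothesis that $\{1,F_1,\dots,F_N\}$ is linearly independent as germs at every point of $M$. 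Hence $\{w=0\}$ has empty interior, $h_\psi$ vanishes on the dense open set $\{w\neq0\}$, and by smoothness $h_\psi\equiv0$, i.e.\ $\psi=0$. Therefore $\coker D\aleph(\lambda)=\{0\}$ and the image is dense.

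The main obstacle I anticipate is not the orthogonality argument itself but the two places where the infinite-dimensional setting must be treated honestly. First, I must justify the formula for $\langle D\aleph(\lambda)(h\lambda),\psi\rangle$ over the \emph{full} domain: this requires pinning down the connection and trivialization used to define $\aleph$ as a section of $(VT)^*\mathfrak C^+(M,\xi)$ and checking that horizontal variations contribute the \emph{same} weight $w$ as the vertical ones computed in Lemma \ref{lem:Hessian-small}, which is exactly what upgrades $w\,h_\psi\in\mathrm{span}\{F_i\}$ to the pointwise $w\,h_\psi=0$. Second, I must specify the meaning of ``dense image,'' i.e.\ the topology or the Banach/convenient completion of $\mathfrak C^+(M,\xi)$ in which $VT_\lambda^*$ is taken and in which $L^2$-orthogonality detects the closure of the image. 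As elsewhere in the paper I would either argue formally or pass to a suitable Sobolev completion, in which $w\,h_\psi=0$ with $w$ smooth and $\{w=0\}$ of empty interior still forces $h_\psi=0$.
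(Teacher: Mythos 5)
Your proposal is correct, and it is essentially the argument of the text: identify the $L^2(\mu_\lambda)$-cokernel of $D\aleph(\lambda)$, express the pairing through the Hessian formula of Lemma \ref{lem:Hessian-small}, substitute the Lagrange-multiplier identity $1+\log f_{\lambda;\lambda_0}=\sum_i c_iF_i$ valid at points of $\Sigma_{\CS_{\lambda_0};\CF}$, and use the germ linear independence of $\{1,F_1,\dots,F_N\}$ to force any cokernel element to vanish by continuity.

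Two of your refinements, however, are genuine improvements, and your ``main obstacle'' deserves a comment. First, you insist on testing against $D\aleph(\lambda)(h\lambda)$ for \emph{all} $h\in C^\infty(M)$, and you correctly note that vertical-only testing yields merely $w\,h_\psi\in\operatorname{span}\{F_1,\dots,F_N\}$, which does not kill $h_\psi$ since $w$ changes sign in general. This matters: the text reduces to directions ``$X$ with $X^\pi=0$'' and then asserts the resulting identity ``for all $h_1$,'' a step that is legitimate only under the reading in which $X^\pi$ denotes the (identically vanishing) $d\lambda$-component of Lemma \ref{lem:decomposition}, not under the reading in which $X^\perp$ is the $\CO_\CF^{\text{\rm sm}}$-vertical component; your version removes the ambiguity in the only way that makes the pointwise conclusion $w\,h_\psi=0$ valid. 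Second, your weight $w=(n+1)^2+n(n+1)\sum_i c_iF_i$ is the arithmetically correct consequence of Lemma \ref{lem:Hessian-small}; the text's $(n+1)(2n+1)+n(n+1)\sum_i p_iF_i$ carries a harmless constant slip, immaterial since only ``nonzero constant plus an element of $\operatorname{span}\{F_i\}$'' is used. Third, about the connection issue you flag: if one computes the \emph{canonical} derivative of the section $\aleph=d^v\CS^{\text{\rm sm}}_{\lambda_0}$ at a zero, the verticality constraint on the extension of the test vector contributes the extra term $-\sum_i c_i\,\mathrm{Hess}\,\CO_{F_i}^{\text{\rm sm}}(X,Y)$, i.e.\ one must take the Hessian of the Lagrangian $\CS^{\text{\rm sm}}_{\lambda_0}-\sum_i c_i\CO_{F_i}^{\text{\rm sm}}$ rather than of $\CS^{\text{\rm sm}}_{\lambda_0}$ alone; the Lagrange identity then collapses the weight to the constant $(n+1)^2$, the pairing becomes $(n+1)^2\int_M h_1h_2\,d\mu_\lambda$, and density follows from positive-definiteness with no germ hypothesis at all (this is just strict convexity of relative entropy against constraints that are linear in the density variable). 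It is only in the flat trivialization implicit in Lemma \ref{lem:Hessian-small}---the one you and the text both use---that the germ hypothesis is what saves the argument.
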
 
\begin{proof} It is enough to show that the $L^2$-cokernel of $D\aleph(\lambda)$, 
i.e., the kernel of formal adjoint  (or $L^2$-adjoint) $D^\dagger \aleph(\lambda)$ vanishes. 
 
Suppose a vertical vector field
$$
Y \in VT_\lambda\mathfrak C^+(M,\xi) \cong T_\lambda \left(\CO_\CF^{-1}(\CO_\CF(\lambda) \right) \subset T_\lambda \mathfrak C^+(M,\xi)
$$
satisfies
$$
\langle\langle X, D^\dagger \aleph(\lambda) (Y)\rangle 
= \langle \langle D\aleph(\lambda) (X), Y \rangle \rangle = 0
$$
for all $X \in T_\lambda\mathfrak C^+(M,\xi)$. Here $\langle \langle \cdot, \cdot \rangle \rangle$
stands for the $L^2$-paring for the measure $\mu_\lambda$. We would like to conclude $Y = 0$.

By writing $X = X^\pi + X^\perp$, this equation becomes
$$
\langle \langle D\aleph(\lambda)( X^\pi), Y \rangle \rangle
 + \langle \langle D\aleph(\lambda) (X^\perp), Y \rangle \rangle = 0.
$$
In the case of our current interest, it is enough to consider $X$ with $X^\pi = 0$. 
Then the equation becomes
\beastar
0 & = & \langle \langle D\aleph(\lambda) (X^\perp), Y \rangle \rangle
= \langle \langle D_v \aleph(\lambda)( X^\perp), Y \rangle \rangle \\
& = & \text{\rm VHess}\, \CS_{\lambda_0}(\lambda) (X^\perp, Y).
\eeastar
If we set $X^\perp = h_1 \lambda, \, Y = h_2 \lambda \in VT_\lambda \mathfrak C^+(M,\xi)$, then
$$
\text{\rm VHess} \CS_{\lambda_0}(\lambda) (X^\perp,Y) 
= \text{\rm Hess}\CS_{\lambda_0}(\lambda) (X^\perp,Y)
$$
and hence  Lemma \ref{lem:Hessian-small} gives rise to
\beastar
0 & = &  \int_M (n+1)(2n+1) h_1 h_2\, d\mu_\lambda + 
n \int_M (n+1)(1+ \log f_{\lambda; \lambda_0}) h_1 h_2\, d\mu_\lambda\\
& = & \int_M (n+1)(2n+1) h_1 h_2\, d\mu_\lambda + n d\CS_{\lambda_0}^{\text{\rm sm}}(\lambda)( (h_1h_2) \, \lambda)
\eeastar
for all function $h_1$. Recalling
$$
d\CS_{\lambda_0}^{\text{sm}}(\lambda) = \sum_{i=1}^N p_i d\CO_{F_i}^{\text{sm}}(\lambda), \, \text{ for some } p_i \in \R,
$$
we show
$$
d\CS_{\lambda_0}^{\text{sm}}(\lambda) (h_1h_2) =  \sum_{i=1}^N p_i d\CO_{F_i}^{\text{sm}}(\lambda)(h_1h_2)
$$
we have concluded
$$
\int_M \left( (n+1)(2n+1)+ n (n+1) \sum_{i=1}^N  p_i F_i \right) h_1h_2  \, d\mu_\lambda = 0
$$
for all $h_1$.  This implies
$$
\left( (n+1)(2n+1) + n(n+1) \sum_{i=1}^N p_i F_i\right) h_2 = 0.
$$
Let $x \in M$ be any point. By the hypothesis, there is an open neighborhood $U$ of $x$ such that
$\{1, \, F_1, \, F_2, \cdots, F_N\}|_U$ is linearly independent and so the function
$$
\left( (n+1)(2n+1)+ n(n+1) \sum_{i=1}^N  p_i F_i \right)\Big|_U
$$
is not a zero function. Therefore $h_2(y) = 0$ for some
point $y \in U$. Since this holds for any neighborhood of $x$ and $h_2$ is continuous, $h_2(x) = 0$.
Since this holds for all $x \in M$, we conclude $h_2= 0$ and hence $Y = 0$.
This finishes the proof.
\end{proof}

An immediate corollary is the following.
\begin{cor}\label{cor:dimension} Assume the map
$D_v\aleph(\lambda): VT_{\aleph(\lambda)} \mathfrak C(M,\xi) \to  VT_{\aleph(\lambda)}^* \mathfrak C(M,\xi)$
is an isomorphism. Then under the same hypotheses as in Proposition
\ref{prop:SSsm-transversality}, the following holds:
\begin{enumerate}
\item $\CS_{\lambda_0}^{\text{\rm sm}}$ is relative Morse with respect to the fibration 
$\CO_\CF^{\text{\rm sm}}: \mathfrak{C}(M,\xi) \to B_\CF$ in the sense of Definition \ref{defn:relative-Morse}.
\item 
$\Sigma_{\CS_{\lambda_0}^\text{\rm sm}}$ is an $N$ dimensional smooth submanifold.
\end{enumerate}
\end{cor}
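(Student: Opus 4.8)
The plan is to deduce both assertions from the single observation that, by Definition~\ref{defn:relative-Morse}, saying $\CS_{\lambda_0}^{\text{\rm sm}}$ is relative Morse is \emph{exactly} the transversality condition \eqref{eq:transversality-dvCSCF}, namely $\aleph = d^v\CS_{\lambda_0}^{\text{\rm sm}} \pitchfork 0$, and that transversality of a section of a vector bundle to its zero section at a zero $\lambda$ is equivalent to surjectivity of the intrinsic (covariant) derivative $D\aleph(\lambda) : T_\lambda\mathfrak C^+(M,\xi) \to VT_\lambda^*\mathfrak C^+(M,\xi)$. Decomposing the domain as in the text, $D\aleph = D_h\aleph + D_v\aleph$, the hypothesis that $D_v\aleph(\lambda)$ is an isomorphism forces $D\aleph(\lambda)$ to be surjective: its restriction $D_v\aleph(\lambda)$ to the vertical summand alone already surjects onto $VT_\lambda^*\mathfrak C^+(M,\xi)$. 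This gives (1), and it is precisely the place where the isomorphism hypothesis upgrades the merely dense image furnished by Proposition~\ref{prop:SSsm-transversality} to genuine surjectivity, hence to transversality.

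For (2), I would apply the implicit function theorem at each $\lambda \in \Sigma_{\CS_{\lambda_0}^{\text{\rm sm}}}$. Since $D\aleph(\lambda)$ is surjective with the bounded right inverse supplied by $(D_v\aleph(\lambda))^{-1}$, the zero set $\Sigma_{\CS_{\lambda_0}^{\text{\rm sm}}} = \aleph^{-1}(0)$ is locally a smooth submanifold with $T_\lambda\Sigma_{\CS_{\lambda_0}^{\text{\rm sm}}} = \ker D\aleph(\lambda)$. To identify this kernel I would solve $D_h\aleph(X^h) + D_v\aleph(X^v) = 0$ for $X = X^h + X^v$: because $D_v\aleph(\lambda)$ is invertible, every horizontal $X^h$ determines the unique vertical $X^v = -(D_v\aleph(\lambda))^{-1}D_h\aleph(X^h)$, so projection to the horizontal summand gives an isomorphism $\ker D\aleph(\lambda) \cong HT_\lambda\mathfrak C^+(M,\xi)$. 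By the identification \eqref{eq:HV-isomorphism} this is $T_{\CO_\CF(\lambda)}B_\CF$, and since $\CO_\CF^{\text{\rm sm}}$ is a submersion onto the open set $B_\CF \subseteq \R^N$ by Proposition~\ref{prop:regularity-small}, it has dimension $N$. Hence $\Sigma_{\CS_{\lambda_0}^{\text{\rm sm}}}$ is an $N$-dimensional smooth submanifold.

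The hard part will be the functional-analytic justification, not the linear algebra above, which is immediate once the isomorphism hypothesis is granted. By Lemma~\ref{lem:Hessian-small} the operator $D_v\aleph(\lambda)$ is, with respect to the $L^2(\mu_\lambda)$ pairing, simply multiplication by the smooth function $(n+1)(2n+1) + n(n+1)\log f_{\lambda;\lambda_0}$, so the hypothesis amounts to this multiplier being nowhere vanishing, with inverse the multiplication by its reciprocal---a tame operation. In the Fr\'echet setting of $\mathfrak C^+(M,\xi)$ with its $C^\infty$ topology the ordinary implicit function theorem does not apply verbatim; one must either pass to suitable $H^s$-completions of the off-shell space and check that $D_v\aleph(\lambda)$ extends to a bounded isomorphism there, or invoke a Nash--Moser / convenient-calculus inverse function theorem in the sense of \cite{kriegl-michor}, for which the tameness of the multiplicative inverse is exactly what is needed. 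This is the functional-analytic point the paper explicitly defers, so I would state (2) with the standing understanding that such a completion has been fixed, making both the isomorphism hypothesis on $D_v\aleph(\lambda)$ and the implicit function theorem meaningful, and carry out the smooth-submanifold conclusion in that setting.
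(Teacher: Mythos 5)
Your proposal is correct, and its overall skeleton coincides with the paper's proof: both use the splitting $T_\lambda\mathfrak C^+(M,\xi) = HT_\lambda \oplus VT_\lambda$, both observe that the isomorphism hypothesis on $D_v\aleph(\lambda)$ is exactly what upgrades the dense image of Proposition \ref{prop:SSsm-transversality} to genuine surjectivity of $D\aleph(\lambda)$ (hence transversality, i.e.\ the relative Morse property), and both identify the tangent space of the vertical critical locus with an $N$-dimensional horizontal space. Where you genuinely diverge is in the mechanism of the dimension count for (2): the paper builds a commutative diagram of two short exact sequences and invokes the Five Lemma to conclude that $d_\lambda\CO_\CF|_{\Sigma}$ is an isomorphism onto $T_{\CO_\CF(\lambda)}B_\CF$, whereas you solve $D_h\aleph(X^h)+D_v\aleph(X^v)=0$ directly and exhibit $\ker D\aleph(\lambda)=T_\lambda\Sigma_{\CS_{\lambda_0};\CF}$ as the graph of $-(D_v\aleph(\lambda))^{-1}D_h\aleph(\lambda)$ over $HT_\lambda$, so that horizontal projection is an isomorphism. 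The two statements are equivalent, since for $X=X^h+X^v$ in the kernel one has $d\CO_\CF(\lambda)(X)=d\CO_\CF(\lambda)|_{HT}(X^h)$ and $d\CO_\CF(\lambda)|_{HT}$ is an isomorphism; but your graph argument is more elementary and self-contained, and in particular it does not rely on exactness of the paper's second row, which the paper asserts with proof omitted. A further useful addition on your side is the explicit identification, via Lemma \ref{lem:Hessian-small}, of $D_v\aleph(\lambda)$ as the multiplication operator by $(n+1)(2n+1)+n(n+1)\log f_{\lambda;\lambda_0}$ in the $L^2(\mu_\lambda)$ pairing, which turns the abstract isomorphism hypothesis into the concrete condition that this multiplier be nowhere vanishing (hence, by compactness of $M$, bounded away from zero); the paper instead leaves this hypothesis abstract and only comments on it through the remark on self-adjoint extensions. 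Your functional-analytic caveats about completions match the paper's own explicit deferral of these issues, so nothing is missing there.
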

\begin{proof} By Proposition \ref{prop:SSsm-transversality}, smoothness follows.
Furthermore Hypothesis (1) also implies that $\CS_{\lambda_0}^{\text{\rm sm}}$ is relative Morse.

It is enough to compute its dimension. We recall the decomposition 
$$
T_\lambda\mathfrak C^+(M,\xi) = HT_\lambda\mathfrak C^+(M,\xi) \oplus VT_\lambda \mathfrak C^+(M,\xi),
$$
and that $HT_\lambda\mathfrak \mathfrak C^+(M,\R)$ is of the same dimension as $B_\CF = N$.
We also recall that $\aleph$ is a section
of $VT^*\mathfrak C^+(M,\xi) \to \mathfrak C^+(M,\xi)$ so that $\pi \circ \aleph = \id$ and hence
$$
d\CO_\CF(\lambda)|_{HT} : HT_\lambda \mathfrak C^+(M,\xi) \to T_{\CO_\CF(\lambda)} B_\CF
$$
 is an isomorphism. See the following commutative diagram:
 $$
 \xymatrix{T_\lambda \mathfrak C^+(M,\xi)\ar[d] _{\pi_h} \ar[dr]^{D\aleph(\lambda)} \ar[r]^{\pi_v}
 & VT_\lambda \mathfrak C^+(M,\xi)\ar[d]^{D_v\aleph(\lambda)} \\
   HT_\lambda \mathfrak C^+(M,\xi) \ar[d]_{d_\lambda \CO_\CF|_{HT}}  \ar[r] 
   \ar[dr]^{\hookrightarrow} 
 \ar[r]^<<<<{D_h\aleph(\lambda)}  &T_{\aleph(\lambda)}\left( (VT)^*\mathfrak C^+(M,\xi)\right) \ar[d]^{d\pi} & \\
 T_{\CO_\CF(\lambda)} B_\CF   &  T_{\lambda}\mathfrak C^+(M,\xi)  \ar[l]_{d_\lambda \CO_\CF}  
 }
 $$
 where we use the identification 
 $$
T_\lambda \mathfrak C^+(M,\xi)  \oplus (VT)^*_{\lambda}\mathfrak C^+(M,\xi)
\cong T_{\aleph(\lambda)}\left( (VT)^*\mathfrak C^+(M,\xi)\right).
$$
Obviously we have $d\pi \circ D\aleph(\lambda) = \id$ since $\aleph$ is a section.
Combining this with the sujectivity of $D_v \aleph (\lambda)$ implies $D\aleph(\lambda)$ is 
surjective.  We consider the following commutative diagram
$$
\xymatrix{ 
0 \ar[r]  & T_\lambda \Sigma_{\CS_{\lambda_0};\CF}  \ar[r] \ar[d]_{d_\lambda \CO_\CF|_\Sigma}
& T_\lambda \mathfrak C(M,\xi) \ar[d]_{=} \ar[r]^<<<<{D\aleph(\lambda)} & VT_{\aleph(\lambda)}^* \mathfrak C(M,\xi) \ar[d]_{D_v^*\aleph(\lambda)}
 \ar[r] & 0\\
0 \ar[r]  & T_{\CO_\CF(\lambda)} B_\CF \ar[r]  &T_\lambda \mathfrak C(M,\xi) \ar[r] &  VT_{\aleph(\lambda)}\mathfrak C(M,\xi)\ar[r] 
 & 0
 }
$$
where the first map of the second row is the horizontal lifting and the second map is the
composition
$$
D_v^*\aleph(\lambda) \circ D\aleph(\lambda).
$$
Exactness of the first row is obvious by the definitions of $\Sigma_{\CS_{\lambda_0};\CF}$ and of the map $\aleph$. 
It follows from nondegeneracy of $\text{\rm VHess} \CS_{\lambda_0}(\lambda)$ that the second row is also
exact, whose proof we omit.
Therefore, under the given hypothesis,  Five Lemma implies that the map $d_\lambda \CO_\CF|_\Sigma$ is an isomorphism.
Combining these, we conclude that  $\dim \Sigma_{\CS_{\lambda_0}^\text{\rm sm}} = N$.
\end{proof}

\begin{rem} Recall that $D\aleph(\lambda)$ is a symmetric operator in the $L^2$ space.
We take a self-adjoint extension thereof.
(We refer readers to \cite{rudin:functional}, for example, for the detailed study of this extension.)
Therefore, once we have the denseness proved in Proposition \ref{prop:SSsm-transversality}, 
it will automatically imply Hypothesis (1) required in the above corollary,
after taking a suitable completion mentioned right before Proposition \ref{prop:SSsm-transversality}. 
This is because the extension will also have zero cokernel, which in turn implies zero kernel by the self-adjointness.
\end{rem}

\subsection{On the big phase space}

Recall $\CS_{\lambda_0} = D_{\text{\rm KL}}^{\mu_{\lambda_0}}\circ  \vol$. Therefore
we have $d\CS_{\lambda_0} = d D_{\text{\rm KL}}^{\mu_{\lambda_0}}\circ  d\vol$ and
the second derivative
\be\label{eq:d2CS}
d^2\CS_{\lambda_0} = d^2 D_{\text{\rm KL}}^{\mu_{\lambda_0}} ( d\vol, d\vol) + dD_{\text{\rm KL}}^{\mu_{\lambda_0}}\circ (d^2 \vol)
\ee
and similar formulae apply to $\CO_\CF$ and $V$. The following lemma is standard.

\begin{lem}\label{lem:hessian} Consider any
 function $T^{\mathfrak C^+}: \mathfrak{C^+}(M) \to \R$ of the form
$T^{\mathfrak C^+} = T^{\CD^+} \circ \vol$. Let $\lambda$ be a critical point of $T^{\mathfrak C}$ and
$\alpha_i$ $i=1, \, 2$ be two variations of $\lambda$. 
Decompose $\alpha_i = Y_i^\pi \intprod d\lambda + h_i \lambda$ for $i =1, \, 2$.
Then we have
$$
d^2 T^{\mathfrak C^+}(\lambda) (\alpha_1, \alpha_2) 
= d^2 T^{\CD^+}(\mu_\lambda)(\delta_{\alpha_1}\mu_\lambda,\delta_{\alpha_2}\mu_\lambda)
$$
 for $i=1, \, 2$.
\end{lem}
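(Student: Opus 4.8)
The plan is to reduce the statement to the standard chain rule for second derivatives of a composition, together with the observation that the hypothesis that $\lambda$ is a critical point of $T^{\mathfrak C^+}$ forces $\mu_\lambda$ to be a critical point of $T^{\CD^+}$, so that the second-order contribution of $\vol$ drops out. First I would record the second-derivative chain rule for $T^{\mathfrak C^+} = T^{\CD^+}\circ \vol$ at an arbitrary point, exactly in the form already used for $\CS_{\lambda_0}$ in \eqref{eq:d2CS}:
$$
d^2 T^{\mathfrak C^+}(\lambda)(\alpha_1,\alpha_2)
= d^2 T^{\CD^+}(\mu_\lambda)\bigl(d\vol(\lambda)(\alpha_1),\, d\vol(\lambda)(\alpha_2)\bigr)
+ dT^{\CD^+}(\mu_\lambda)\bigl(d^2\vol(\lambda)(\alpha_1,\alpha_2)\bigr),
$$
noting that $d\vol(\lambda)(\alpha_i) = \delta_{\alpha_i}\mu_\lambda$ by Proposition \ref{prop:dvol}. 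In this form the desired identity is equivalent to the vanishing of the last summand.

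The key step is therefore to show $dT^{\CD^+}(\mu_\lambda) = 0$, i.e. that $\mu_\lambda$ is a critical point of $T^{\CD^+}$ on $\CD^+(M)$. For this I would use that $\vol$ is a submersion at $\lambda$: the formula $d\vol(\lambda)(\alpha) = ((n+1)h_\alpha + \nabla\cdot Y_\alpha^\pi)\,\mu_\lambda$ of Proposition \ref{prop:dvol}, specialized to $Y_\alpha^\pi = 0$ with $h_\alpha$ ranging over $C^\infty(M,\R)$, already produces every element $g\,\mu_\lambda$ of the tangent space $T_{\mu_\lambda}\CD^+(M)$. Hence $d\vol(\lambda)$ is surjective. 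Since $\lambda$ is critical for $T^{\mathfrak C^+}$, the first-order chain rule gives $0 = dT^{\mathfrak C^+}(\lambda) = dT^{\CD^+}(\mu_\lambda)\circ d\vol(\lambda)$, and surjectivity of $d\vol(\lambda)$ then forces $dT^{\CD^+}(\mu_\lambda) = 0$.

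Combining the two steps, the term $dT^{\CD^+}(\mu_\lambda)\bigl(d^2\vol(\lambda)(\alpha_1,\alpha_2)\bigr)$ vanishes identically, leaving precisely
$$
d^2 T^{\mathfrak C^+}(\lambda)(\alpha_1,\alpha_2) = d^2 T^{\CD^+}(\mu_\lambda)(\delta_{\alpha_1}\mu_\lambda,\,\delta_{\alpha_2}\mu_\lambda),
$$
as claimed. The one genuinely delicate point is that the whole argument takes place on the infinite-dimensional manifold $\mathfrak C^+(M)$, so the second-derivative chain rule, the existence of the relevant second derivatives of $\vol$, and the passage from $dT^{\CD^+}(\mu_\lambda)\circ d\vol(\lambda)=0$ to $dT^{\CD^+}(\mu_\lambda)=0$ should be read after a suitable Banach completion or in the convenient calculus of \cite{kriegl-michor}; under the formal identifications of tangent spaces already in force in the paper, each reduces to the pointwise computation above. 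I expect the surjectivity of $d\vol(\lambda)$ to be the only place where the critical-point hypothesis is actually consumed, and hence to be the conceptual heart of the lemma.
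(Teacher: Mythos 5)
Your proof is correct. The paper itself offers no argument for Lemma \ref{lem:hessian} --- it is simply declared ``standard'' --- so there is nothing to compare line by line; what you have written is precisely the argument the paper leaves implicit. Your first display is the paper's own formula \eqref{eq:d2CS} (stated there for $\CS_{\lambda_0}$, with the remark that the same formula applies to the other observables), and the only additional input is your middle step: surjectivity of $d\vol(\lambda)$, read off from Proposition \ref{prop:dvol} by taking $Y^\pi_\alpha = 0$ and letting $h_\alpha$ range over $C^\infty(M,\R)$, which together with the first-order chain rule converts criticality of $\lambda$ for $T^{\mathfrak C^+}$ into criticality of $\mu_\lambda$ for $T^{\CD^+}$ and kills the term $dT^{\CD^+}(\mu_\lambda)\bigl(d^2\vol(\lambda)(\alpha_1,\alpha_2)\bigr)$. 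One small quibble of phrasing: the critical-point hypothesis is not consumed ``in'' the surjectivity of $d\vol(\lambda)$, which holds at every $\lambda$ regardless; it is consumed in the implication $dT^{\CD^+}(\mu_\lambda)\circ d\vol(\lambda)=0 \Rightarrow dT^{\CD^+}(\mu_\lambda)=0$, for which surjectivity is the auxiliary fact. Otherwise the argument, including your caveat that the chain-rule manipulations should be read in a suitable completion or in the convenient calculus of \cite{kriegl-michor}, is exactly at the level of rigor the paper itself adopts.
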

Having the above general second derivative formula in our mind and the lemma, we compute
\beastar
d^2\vol(\lambda)(\alpha_1, \alpha_2)&=&n d \alpha_2  \wedge \alpha_1 \wedge (d\lambda)^{n-1}+ n d \alpha_1 \wedge \alpha_2 \wedge (d\lambda)^{n-1}\\
&+&
n(n-1) \lambda \wedge (d\alpha_1) \wedge (d\lambda)^{n-2} \wedge (d\alpha_2).
\eeastar
We put  $\delta_{\alpha_i} \mu_\lambda = k_i \, \mu_\lambda$
for some function $k_i$ respectively. Then decomposing $\alpha_i$ into 
$\alpha_i = Y^\pi_i \intprod d\lambda + h_i \lambda$, we derive
\be\label{eq:ki}
k_i = (n+1) h_i + \nabla \cdot Y^\pi_i 
\ee
on the big phase space. (See \eqref{eq:dvol} in Proposition \ref{prop:dvol}.)
Then  we obtain
\bea\label{eq:d2vol-lambda}
d^2\vol(\lambda)(\alpha_1, \alpha_2)&=& n(n+1) h_1 h_2 \, \mu_\lambda + nh_1 \nabla\cdot Y_2^\pi\, \mu_\lambda 
+ nh_2 \nabla\cdot Y_1^\pi\, \mu_\lambda \nonumber\\
&{}& + (Y_1^\pi[h_2] + Y_2^\pi[h_1])\, \mu_\lambda \nonumber\\
&{}& + n\CL_{Y_1^\pi}\lambda \wedge \CL_{Y_2^\pi}d\lambda \wedge (d\lambda)^{n-1} \nonumber\\
&{}& + n\CL_{Y_2^\pi} \lambda \wedge \CL_{Y_1^\pi}d\lambda  \wedge (d\lambda)^{n-1} \nonumber\\
&{}& + n(n-1)\lambda \wedge (\CL_{Y_1^\pi}d\lambda) \wedge (\CL_{Y_2^\pi}d\lambda) \wedge (d\lambda)^{n-2}
\eea
and
\be
\text{Hess }\CS_{\lambda_0}(\lambda)(\alpha_1, \alpha_2)=\int_{M} k_1 k_2 d\mu_{\lambda}+\int_{M} (1 + \log f_{\lambda; \lambda_0})d^2\vol(\lambda)(\alpha_1, \alpha_2).
\ee

The following explicit formula for the Hessian of $\CS_{\lambda_0}(\lambda)$ will be derived
by taking one more derivative of \eqref{eq:dSSlambda0}. 

We postpone its proof until Appendix \ref{sec:3-terms}.

\begin{prop}\label{prop:Hessian-big}  Let $\alpha_i = Y_i^\pi \intprod d\lambda + h_i\, \lambda$ for $i=1, \, 2$,
$\mu_{\lambda} = f \mu_{\lambda_0}$ with $f = f_{\lambda;\lambda_0}$, and $k_i = (n+1) h_i + \nabla \cdot Y^\pi_i$ for $i=1,2.$ 
Denote by $X_g$ the Hamiltonian vector field of $g: = 1 + \log f$.
Then
\bea\label{eq:Hessian-big} 
&{}& \text{\rm Hess} \CS_{\lambda_0}(\lambda)(\alpha_1,\alpha_2) \nonumber \\
 & = &\int_M k_1 k_2\,d\mu_{\lambda} \nonumber\\
&{}&+ \int_M n (n+1) h_1 h_2 (1+\log f) \,  d\mu_\lambda \nonumber \\
&{}&-(n+1) \int_M g(Y_2^\pi[h_1]+Y_1^\pi[h_2])\,d\mu_{\lambda} \nonumber\\
&{}&-n \int_M (h_1 Y_2^\pi[g]+h_2 Y_1^\pi[g])\,d\mu_{\lambda} \nonumber\\
 &{}&+\int_M (d\lambda(Y_2^\pi, [Y_1^\pi, X_g^\pi])+2 \, d\lambda(Y_1^\pi, [Y_2^\pi, X_g^\pi]))\,d\mu_{\lambda}  \nonumber \\
 &{} & \int_{M} (\lambda([Y_2^\pi,[Y_1^\pi, X_g^\pi]])+\lambda([Y_1^\pi,[Y_2^\pi, X_g^\pi]]))\,d\mu_{\lambda} \nonumber\\
 &{}&-(n+1)\int_{M} g\lambda([Y_1^\pi, [Y_2^\pi, R_{\lambda}]])\,d\mu_{\lambda}
\eea
\end{prop}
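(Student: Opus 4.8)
The plan is to derive the full Hessian formula \eqref{eq:Hessian-big} by taking a second directional derivative of the first-variation formula \eqref{eq:dSSlambda0}, namely
\be
d\CS_{\lambda_0}(\lambda)(\alpha) = \int_M (1 + \log f_{\lambda;\lambda_0})\, d\vol(\lambda)(\alpha),
\ee
and to organize the answer into exactly the two contributions predicted by Lemma \ref{lem:hessian}: a term coming from differentiating the logarithmic weight $g = 1+\log f$, and a term coming from differentiating $d\vol(\lambda)(\alpha_1)$ itself (i.e. the $d^2\vol$ term). First I would fix a two-parameter family $\lambda_{s,t}$ with $\partial_s|_0 = \alpha_1$, $\partial_t|_0 = \alpha_2$, write $\mu_{\lambda_{s,t}} = f_{(s,t)}\mu_{\lambda_0}$, and differentiate. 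The weight-variation term produces $\int_M (\delta_{\alpha_2} g)\, \delta_{\alpha_1}\mu_\lambda$; since $\delta_{\alpha_2} f = k_2 f$ gives $\delta_{\alpha_2}\log f = k_2$, this collapses to $\int_M k_1 k_2\, d\mu_\lambda$, accounting for the first line of \eqref{eq:Hessian-big}. Everything else must come from $\int_M g\, d^2\vol(\lambda)(\alpha_1,\alpha_2)$, so the bulk of the work is integrating the seven-term expression \eqref{eq:d2vol-lambda} against $g\,\mu_\lambda$.

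The main computational engine is the conversion of the contraction/Lie-derivative terms in \eqref{eq:d2vol-lambda} into the bracket expressions in \eqref{eq:Hessian-big}. The first four terms of \eqref{eq:d2vol-lambda} integrate directly: $n(n+1)h_1 h_2$ against $g\,\mu_\lambda$ yields the second line, while the mixed $h_i \nabla\cdot Y_j^\pi$ and $Y_i^\pi[h_j]$ terms must be combined and integrated by parts against $g$. Here I would use that on a closed $M$, $\int_M g\,(\nabla\cdot Y^\pi)\,\mu_\lambda = -\int_M Y^\pi[g]\,\mu_\lambda$, which is precisely what moves derivatives off $h_1 h_2$ and onto $g$, producing the $-(n+1)\int g(Y_2^\pi[h_1]+Y_1^\pi[h_2])$ line and the $-n\int(h_1 Y_2^\pi[g]+h_2 Y_1^\pi[g])$ line after the appropriate bookkeeping of the combinatorial factors $n$ and $n+1$.

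The hard part — and the reason the proof is deferred to Appendix \ref{sec:3-terms} — is the last three lines of \eqref{eq:d2vol-lambda}, the genuinely second-order contact terms
$$
n\,\CL_{Y_1^\pi}\lambda\wedge\CL_{Y_2^\pi}d\lambda\wedge(d\lambda)^{n-1},\qquad n(n-1)\,\lambda\wedge\CL_{Y_1^\pi}d\lambda\wedge\CL_{Y_2^\pi}d\lambda\wedge(d\lambda)^{n-2},
$$
and their symmetric partners. These involve $\CL_{Y^\pi}d\lambda = d(Y^\pi\intprod d\lambda)$ wedged in top degree, and to extract them I would repeatedly use Cartan's formula together with the structural identity of Lemma \ref{lem:decomposition} relating one-forms to the $(\lambda, d\lambda)$-decomposition. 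The strategy is to integrate these wedge expressions against $g$ and recognize the resulting integrands, after integration by parts, as the iterated-bracket terms $d\lambda(Y_2^\pi,[Y_1^\pi,X_g^\pi])$, $\lambda([Y_2^\pi,[Y_1^\pi,X_g^\pi]])$, and $g\,\lambda([Y_1^\pi,[Y_2^\pi,R_\lambda]])$ — the Hamiltonian vector field $X_g$ of $g=1+\log f$ entering precisely because $dg = R_\lambda[g]\lambda + X_g^\pi\intprod d\lambda$ by Lemma \ref{lem:decomposition}(1). I expect the principal obstacle to be the careful tracking of these Lie-bracket identities and the boundary-free integration by parts that converts wedge products of Lie derivatives into nested brackets; the asymmetry visible in \eqref{eq:Hessian-big} (e.g. the coefficient $2$ on one bracket term and the single power on the double-bracket terms) suggests that the symmetrization in $\alpha_1\leftrightarrow\alpha_2$ must be handled term by term rather than assumed, and this is where the computation is most error-prone.
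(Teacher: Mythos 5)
Your proposal follows essentially the same route as the paper's proof in Appendix \ref{sec:3-terms}: the same splitting of the Hessian into the weight term $\int_M k_1k_2\,d\mu_\lambda$ plus $\int_M g\, d^2\vol(\lambda)(\alpha_1,\alpha_2)$, the same integration by parts against $g$ for the divergence/derivative terms, and the same Cartan-formula-plus-iterated-bracket strategy (entering through $dg = R_\lambda[g]\,\lambda + X_g^\pi \intprod d\lambda$) for the remaining Lie-derivative wedge terms. The only ingredient you leave implicit is the paper's intermediate step (Lemma \ref{lem:2-terms}, resting on Lemma \ref{lem:useful1}) that first converts $n\,\CL_{Y_1^\pi}\lambda\wedge\CL_{Y_2^\pi}d\lambda\wedge(d\lambda)^{n-1}$ into $-\CL_{Y_2^\pi}\CL_{Y_1^\pi}\lambda\wedge(d\lambda)^n$ before the bracket extraction, which is exactly the term-by-term symmetrization bookkeeping you correctly flag as the error-prone part.
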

This is supposed to be symmetric with respect to the swapping of $1$ and $2$. In particular, the
sum of  the second and third lines is supposed to be symmetric, which is, however,
is not apparent at all. One might want
to symmetrize the expressions in the two lines to manifest the symmetry.

We observe that the first and second summand of \eqref{eq:Hessian-big} is precisely the same the Hessian
in Lemma \ref{lem:Hessian-small}
for the small phase space as expected, since it corresponds to $Y_i^\pi = 0$.

Using this observation and the explicit formula, we obtain the following
transversality criterion on the big phase space similarly as 
in the small phase space.  We leave the details of the proof to the interested 
readers except mentioning that  we have
$$
X_{-g}= X_{- (1+\log f_{\lambda;\lambda_0})} = X_{-1} = R_{\lambda_0}
$$
and that the bilinear form appearing in the middle vanishes when $f_{\lambda; \lambda_0}=1$.

\begin{prop} \label{prop:SSbig-transversality} Assume that the system $\CF = \{ F_1, \cdots, F_N\}$ satisfies 
that the germ of $\{X_{F_1}, \cdots, X_{F_N}\}$ at any point $M$ is linearly independent
in addition to the hypothesis of Proposition \ref{prop:SSsm-transversality}.
Then $\CS_{\lambda_0}$ is relative Morse with respect to the fibration $\CO_\CF: \mathfrak C(M) \to B_\CF$.
\end{prop}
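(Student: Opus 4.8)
The plan is to follow the template of Proposition \ref{prop:SSsm-transversality}, enlarging it to control the $\xi$-directions $Y^\pi\intprod d\lambda$ that populate $T_\lambda\mathfrak C^+(M)\cong\Omega^1(M)$ but are absent on the small phase space. As there, the transversality \eqref{eq:transversality-dvCSCF} is equivalent to the assertion that at every vertical critical point $\lambda$ the covariant vertical Hessian $\text{\rm VHess}\,\CS_{\lambda_0}(\lambda)$ has trivial $L^2(\mu_\lambda)$-cokernel on $VT_\lambda\mathfrak C^+(M)$; the self-adjoint extension argument recorded in the Remark after Corollary \ref{cor:dimension} then promotes this denseness to the transversality itself. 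First I would identify the vertical critical locus: since both $\CS_{\lambda_0}$ and $\CO_\CF$ factor through $\vol$, the condition $d^v\CS_{\lambda_0}(\lambda)=0$ is the Gibbs relation $g:=1+\log f_{\lambda;\lambda_0}=\sum_{i=1}^N p_iF_i$ for Lagrange multipliers $p_i$, exactly as on the small phase space. The crucial consequence, by linearity of the contact Hamiltonian assignment, is that $X_g=\sum_i p_iX_{F_i}$ and hence $X_g^\pi=\sum_i p_iX_{F_i}^\pi$, which is where the new hypothesis will enter.

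Next I would exploit the splitting of the Hessian recorded before Proposition \ref{prop:Hessian-big}, namely $\text{\rm Hess}\,\CS_{\lambda_0}(\lambda)(\alpha_1,\alpha_2)=\int_M k_1k_2\,d\mu_\lambda+\int_M g\,d^2\vol(\lambda)(\alpha_1,\alpha_2)$, with $k_i=(n+1)h_i+\nabla\cdot Y_i^\pi$ from \eqref{eq:ki} and $g=\sum_i p_iF_i$ at the critical point. The first summand is positive semidefinite with kernel exactly $\ker d\vol=\{\alpha:k_\alpha=0\}$, so on pure $h$-variations ($Y^\pi=0$) it recovers, together with the $h$-part of the second summand, the small-phase Hessian of Lemma \ref{lem:Hessian-small}, whose nondegeneracy is governed by the pointwise linear independence of $\{1,F_1,\dots,F_N\}$ as in Proposition \ref{prop:SSsm-transversality}. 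The genuinely new content of the big phase space is carried entirely by the second summand $\int_M g\,d^2\vol$, which supplies the $Y^\pi$- and Lie-bracket terms of \eqref{eq:Hessian-big}; because $g=\sum_i p_iF_i$ is generically non-constant, this term survives on the directions $\ker d\vol$ that the first summand cannot see, and it is precisely here that $X_g^\pi=\sum_i p_iX_{F_i}^\pi$ enters.

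The main obstacle is to prove nondegeneracy of $\int_M g\,d^2\vol$ along the $\xi$-directions, complicated by the fact that the $\nabla\cdot Y^\pi$ contribution to $k$ and the cross terms of \eqref{eq:Hessian-big} couple the $h$- and $Y^\pi$-variations, so the two blocks cannot be fully decoupled and must be treated together. The Lie-bracket terms $\lambda([Y_1^\pi,[Y_2^\pi,X_g^\pi]])$ and $d\lambda(Y_2^\pi,[Y_1^\pi,X_g^\pi])$ encode the non-integrability of $\xi$ and do not collapse to a pointwise algebraic pairing; I expect the delicate work to be an integration-by-parts reorganization, using \eqref{eq:defining-Rlambda} and the dictionary of Lemma \ref{lem:decomposition}, that rewrites them as a definite form plus a contribution linear in $\sum_i p_iX_{F_i}^\pi$. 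The hypothesis that $\{X_{F_1},\dots,X_{F_N}\}$ is pointwise linearly independent is then exactly what prevents $X_g^\pi$ from degenerating and forces the $Y^\pi$-component of any cokernel element to vanish. As a consistency check, at $f_{\lambda;\lambda_0}=1$ one has $g\equiv1$, whence $X_g^\pi=0$ and $X_{-g}=X_{-1}=R_{\lambda_0}$, so the middle bracket bilinear form of \eqref{eq:Hessian-big} vanishes identically; this confirms the trivial case but not the generic critical point, where $g=\sum_i p_iF_i$ is non-constant. Once both the $h$- and the $Y^\pi$-components are shown to vanish, passing to a self-adjoint extension as in the small phase space upgrades the resulting denseness to the transversality \eqref{eq:transversality-dvCSCF}.
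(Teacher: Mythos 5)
Your proposal reconstructs exactly the route the paper has in mind: start from the Hessian formula of Proposition \ref{prop:Hessian-big}, observe that its first two summands reproduce the small-phase-space Hessian of Lemma \ref{lem:Hessian-small} (the $Y^\pi_i=0$ directions), use the critical-point relation $g:=1+\log f_{\lambda;\lambda_0}=\sum_i p_iF_i$ so that $X_g^\pi=\sum_i p_iX_{F_i}^\pi$, and pass from $L^2$-denseness to \eqref{eq:transversality-dvCSCF} by the self-adjoint extension remark. But note that the paper itself stops precisely where you stop: it explicitly ``leaves the details of the proof to the interested readers'' after making the same two observations, so there is no completed argument on either side.

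The gap is therefore genuine, and it is worse than unfinished bookkeeping: the step you defer (``I expect \dots a definite form plus a contribution linear in $\sum_i p_iX_{F_i}^\pi$'') is actually \emph{false} at some vertical critical points, so the route as you describe it cannot close. Both $\CS_{\lambda_0}$ and $\CO_\CF$ factor through $\vol$, and on the big phase space $d\vol(\lambda)$ has an infinite-dimensional kernel, namely all $\alpha=h_\alpha\lambda+Y^\pi_\alpha\intprod d\lambda$ with $k_\alpha=(n+1)h_\alpha+\nabla\cdot Y^\pi_\alpha=0$ (see \eqref{eq:ki}); every such $\alpha$ is automatically vertical. Now consider $\lambda=e^{-1/(n+1)}\lambda_0$, i.e.\ $f_{\lambda;\lambda_0}\equiv e^{-1}$. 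This is the unconstrained critical point of $\CS_{\lambda_0}$, hence lies in $\Sigma_{\CS_{\lambda_0};\CF}$ for \emph{every} system $\CF$; it is the branch with all Lagrange multipliers $p_i=0$, so $g\equiv 0$, $dg=0$ and $X_g^\pi=0$. Consequently every term of \eqref{eq:Hessian-big} except the first vanishes identically (each carries a factor of $g$, $dg$ or $X_g^\pi$), and
$$
\text{\rm Hess}\,\CS_{\lambda_0}(\lambda)(\alpha_1,\alpha_2)=\int_M k_1k_2\,d\mu_\lambda,
$$
which annihilates $\ker d\vol(\lambda)$ in both slots. The image of $D\aleph(\lambda)$ is thus contained in the annihilator of an infinite-dimensional subspace of $VT_\lambda\mathfrak{C}^+(M)$ and cannot be dense; no integration by parts can extract a definite form from terms that are identically zero, and the germwise independence of $\{X_{F_1},\dots,X_{F_N}\}$ never enters because $X_g^\pi\equiv 0$ there. (This degeneracy is invisible on the small phase space, where $k_\alpha=(n+1)h_\alpha$ and $d\vol$ is injective, which is why Proposition \ref{prop:SSsm-transversality} goes through.) Relatedly, your ``consistency check'' at $f_{\lambda;\lambda_0}=1$, borrowed from the paper's hint, concerns a configuration that the standing germ-independence of $\{1,F_1,\dots,F_N\}$ excludes from the critical locus altogether, whereas $f_{\lambda;\lambda_0}\equiv e^{-1}$ always occurs. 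Any complete proof must confront this degenerate critical point --- e.g.\ by descending the whole problem to the space of volume forms $\CD^+(M)$ through $\vol$, or by restricting or reformulating the transversality statement --- rather than by the pointwise-nondegeneracy mechanism you propose.
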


\section{Description of thermodynamic equilibrium distribution}
\label{sec:description}

We now fix the contact form $\lambda_0 \in \mathfrak{C}^+(M)$ 
and consider the system with $N$-observables,  $F_1,\dots,F_N$.
The discussion of this section applies both to the big and the small phase spaces respectively.
For the simplicity of notation, we just write $\CS_{\lambda_0}$ for both cases
in this section.

Then, we want to find a $\lambda \in \EC^+(M)$  that extremizes the
relative entropy functional
$$
\CS_{\lambda_0} (\lambda)= \int_M  \log \left(\frac{d\mu_\lambda}{d \mu_{\lambda_0}}\right) d \mu_\lambda.
$$
under the constraints
\begin{equation}\label{eq:pi=intFi}
 q_i  = \int_M F_i d\mu_\lambda
\end{equation}
for given explicit observations $q=(q_1,\cdots,q_N)$.

In particular at any extremum point $\lambda \in \EC^+(M)$, $dS_{\lambda_0}(\lambda)$ must vanish 
on $T_{\lambda}(\CO_{\CF}^{-1}(q))
=\text{ker }d\CO_{\CF}(\lambda)$ with $q =\CO_{\CF}(\lambda)$.
(\emph{We would like to emphasize that the restriction 
$$
\CO_\CF|_{\Sigma_{\CS_{\lambda_0;\CF}}}: \Sigma_{\CS_{\lambda_0;\CF}} \to B_\CF
$$
of the observation map $\CO_\CF$  is still not necessarily one-to-one.})
The vertical criticality is equivalent to 
\be\label{eq:conormal-condition}
d\CS_{\lambda_0}(\lambda) = \sum_{i=1}^N p_i\, d\CO_{F_i}(\lambda),
\ee
for some $(p_1, \cdots, p_N) \in \R^{N}$.

Now we can re-write \eqref{eq:conormal-condition} into
\be\label{eq:exact-differential}
d\left(\CS_{\lambda_0}-\sum_{i=1}^N p_i \CO_{F_i}\right)(\lambda) (\alpha) = 0
\ee
for all $\alpha \in \Omega^1(M)$. Once we have these formulae, we easily derive
\be \label{eq:dSlambda0-alpha}
dS_{\lambda_0}(\lambda)(\alpha) = 
\int_M  (\log f_{\lambda;\lambda_0} + 1)\,  \delta_\alpha \mu_\lambda 
\ee
where we recall $\delta_\alpha \mu_\lambda =  \delta f_{\lambda;\lambda_0} \, d\mu_{\lambda_0}.$
We also have
\be\label{eq:dCOFi-alpha}
d\CO_{F_i}(\lambda)(\alpha)=\int_M F_i \, \delta_\alpha \mu_\lambda, \, i=1, \cdots, N,
\ee
for all $\alpha \in \Omega^1(M)$.

These being said, \eqref{eq:exact-differential} can be written as
\beastar
0  &= & 1 + \log f_{\lambda;\lambda_0} - \sum_{i=1}^N p_i F_i.
 \eeastar
and hence
\be\label{eq:fD}
f_{\lambda;\lambda_0} = e^{(\sum_{i=1}^N p_i F_i)-1}
\ee
Now at any point $\lambda$ satisfying $d^v\CS_{\lambda_0}(\lambda) = 0$,
the regularity stated in Proposition \ref{prop:regularity}
implies that  the set $\{d\CO_{F_i}(\lambda)\}$ is linearly 
independent so that $\lambda$ admits  a smooth branch of Lagrange multipliers
$(p_1, \cdots, p_N)$  satisfying \eqref{eq:conormal-condition}. This provides a local coordinate 
description of
the contact thermodynamic equilibrium $R_{\CS_{\lambda_0};\CF}$. 

\begin{rem}\label{rem:multi-valued} Recall that each branch is locally given by the intersection
$$
\CO_\CF^{-1}(q) \cap \Sigma_{\CS_{\lambda_0};\CF}.
$$
The above Lagrange multiplier provides the local description of the image of 
$$
\iota_{\CS_{\lambda_0};\CF} : \Sigma_{\CS_{\lambda_0};\CF} \to J^1B_\CF,
$$
near the given image point 
$$
(q,p, z) =  \iota_{\CS_{\lambda_0};\CF} (\lambda).
$$
\emph{There could be more than one element $\lambda \in \CO_\CF^{-1}(p) \cap \Sigma_{\CS_{\lambda_0};\CF}$
that have the same image in $B_\CF$ under the observation map, and also
in the image of the map $\iota_{\CS_{\lambda_0};\CF} $ in $J^1B_\CF$.}
(See Appendix \ref{sec:Legendrian-property} for the precise description.)
This is the reason why this
coordinate discussion works only locally and it is also the source of \emph{the failure of 
the $C^0$-holonomicity} discussed in \cite{lim-oh} in their discussion of the Maxwell's 
equal area law. 
As mentioned therein, our discussion of the reduced entropy function 
as a generating function of the thermodynamic equilibria provides a global functorial
discussion thereof. We would also like to mention that this is also the reason 
why $R_{\CS_{\lambda_0};\CF}$ is a priori
only an (immersed) submanifold, not necessarily embedded. 
(See Appendix \ref{sec:Legendrian-property}.)
\end{rem}

If we fix $\lambda_0 \in \EC_1(M)$ and $\CF=\{1, F_1, \cdots, F_N\}$, then (\ref{eq:conormal-condition}) can be changed to 
\bea
d\CS_{\lambda_0}(\lambda)=(1-w)dV(\lambda)+\sum_{i=1}^N p_i d\CO_{F_i}(\lambda)
\eea
for some $(w, p_1, \cdots, p_N )$, where $V$ is the mass function.
Then, if we introduce an additional condition $V(\lambda) = \int_M\mu_\lambda = 1$ (the normalization condition), we obtain
$$
1 = \int_M  d\mu_{\lambda} = \int_M e^{-w + \sum_{i=1}^N p_i F_i} \, d\mu_{\lambda_0}
$$
and the expression of $w$ in terms of other Lagrange multipliers
\begin{equation}\label{eq:ew}
e^{w} = \int_M e^{\sum_{i=1}^N p_i F_i}\, d\mu_{\lambda_0}.
\end{equation}
By taking the logarithm thereof, we get
\begin{equation}\label{eq:w}
w  = \log \left( \int_M e^{\sum_{i=1}^N  p_i F_i}\, d\mu_{\lambda_0}\right).
\end{equation}

Therefore we have proved
\begin{prop} At any extreme point $\rho_{(w,p);\lambda_0}$ of the entropy under the constraint
\eqref{eq:pi=intFi}, there exists a Lagrange multiplier
$(w, p_1, \ldots, p_N)$ such that
$\mu_\lambda$ has the form
$$
\rho_{(w,p);\lambda_0} = e^{-w + \sum_{i=1}^N p_i F_i}\, \mu_{\lambda_0}.
$$
For the measure, 
$$
\rho_{p;\lambda_0} = e^{-w(p) + \sum_{i=1}^N p_i F_i}\, \mu_{\lambda_0
}$$
where $1=\int_M \mu_{\lambda}$, the followings also hold:
\begin{enumerate}
\item
The $\rho_{ p; \lambda_0}$ has the form
\be\label{eq:rho-formula}
\rho_{p; \lambda_0} = \left(\frac{e^{\sum_{i=1}^N p_i F_i}}{\int_M
e^{\sum_{i=1}^N p_i F_i}\,  d
\mu_{\lambda_0}} \right) \, \mu_{\lambda_0}.
\ee
\item The observations associated to the variable $(p_1, \cdots, p_N)$
have their values
$$
q_i(p):=\int_M F_i d\mu_{\lambda} = \frac{\partial w}{\partial{p_i}}, 
$$
\item The (relative) entropy is expressed as
$$
\CS_{\lambda_0} (p)= \int_{M} \left(-w(p) +\sum_{i=1}^N p_i F_i\right)\, e^{-w(p) +\sum_i p_i F_i}\, d\mu_{\lambda_0}
$$
on the thermodynamic equilibrium $R_{\CS_{\lambda_0};\CF}$ given in \eqref{eq:RCF}.
\end{enumerate}
\end{prop}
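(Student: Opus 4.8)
The plan is to assemble the pointwise Euler--Lagrange equation that has essentially already been extracted in \eqref{eq:fD}--\eqref{eq:w}, and then to verify the three itemized identities by direct substitution together with a single differentiation under the integral sign. First I would record that, since the normalization $V(\lambda) = \int_M \mu_\lambda = 1$ is imposed as an additional constraint, the vertical criticality condition \eqref{eq:conormal-condition} acquires the extra term carried by the constant observable, so that at an extremum $\lambda$ one has $d\CS_{\lambda_0}(\lambda) = (1-w)\, dV(\lambda) + \sum_{i=1}^N p_i\, d\CO_{F_i}(\lambda)$ for suitable real multipliers $(w, p_1, \dots, p_N)$. Feeding in the first-variation formulas \eqref{eq:dSlambda0-alpha} and \eqref{eq:dCOFi-alpha} together with $dV(\lambda)(\alpha) = \int_M \delta_\alpha \mu_\lambda$, this becomes the requirement that $\int_M \bigl(1 + \log f_{\lambda;\lambda_0} - (1-w) - \sum_i p_i F_i\bigr)\,\delta_\alpha \mu_\lambda = 0$ for all variations $\alpha$.

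Second, I would invoke the regularity input: by Proposition \ref{prop:regularity} (or Proposition \ref{prop:regularity-small} on the small phase space) the densities $\delta_\alpha\mu_\lambda$ exhaust a sufficiently rich family of test functions against $\mu_\lambda$; in particular, taking $\alpha = h\lambda$ yields $\delta_\alpha\mu_\lambda = (n+1)h\,\mu_\lambda$ with $h \in C^\infty(M,\R)$ arbitrary. Hence the bracketed integrand must vanish pointwise, giving $1 + \log f_{\lambda;\lambda_0} = (1-w) + \sum_i p_i F_i$, i.e. $f_{\lambda;\lambda_0} = e^{-w + \sum_i p_i F_i}$. Since $\mu_\lambda = f_{\lambda;\lambda_0}\,\mu_{\lambda_0}$, this is precisely the asserted Gibbs form $\rho_{(w,p);\lambda_0} = e^{-w + \sum_i p_i F_i}\,\mu_{\lambda_0}$, and imposing $\int_M \mu_\lambda = 1$ pins down $w$ through \eqref{eq:w}. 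Part~(1) then follows by substituting $e^{-w} = \bigl(\int_M e^{\sum_i p_i F_i}\,d\mu_{\lambda_0}\bigr)^{-1}$ into this expression.

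Third, for part~(2) I would differentiate $w(p) = \log\bigl(\int_M e^{\sum_j p_j F_j}\,d\mu_{\lambda_0}\bigr)$ in $p_i$. Because $M$ is compact and the integrand is smooth in $(x,p)$, differentiation under the integral sign is legitimate and produces $\partial w/\partial p_i = \bigl(\int_M F_i\, e^{\sum_j p_j F_j}\,d\mu_{\lambda_0}\bigr)\bigl/\bigl(\int_M e^{\sum_j p_j F_j}\,d\mu_{\lambda_0}\bigr)$, which is exactly $\int_M F_i\,\rho_{p;\lambda_0} = \int_M F_i\,d\mu_\lambda = q_i(p)$ in view of part~(1). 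Part~(3) is then immediate: the Gibbs form gives $\log\bigl(d\mu_\lambda/d\mu_{\lambda_0}\bigr) = -w + \sum_i p_i F_i$, so substituting this and $d\mu_\lambda = e^{-w+\sum_i p_i F_i}\,d\mu_{\lambda_0}$ into $\CS_{\lambda_0}(\lambda) = \int_M \log\bigl(d\mu_\lambda/d\mu_{\lambda_0}\bigr)\,d\mu_\lambda$ yields the displayed formula, valued on the equilibrium $R_{\CS_{\lambda_0};\CF}$ of \eqref{eq:RCF}.

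I expect no serious obstacle, since the only genuine content -- the Euler--Lagrange equation -- has already been isolated in the preceding discussion, and the remaining steps reduce to algebraic substitution and one elementary differentiation. The single point demanding care is the passage from \emph{``the integral vanishes for all $\alpha$''} to the pointwise identity; this rests on the surjectivity/density of the first-variation map furnished by the regularity propositions and, for full rigor in the infinite-dimensional setting, on the completion or convenient-calculus framework that the paper defers elsewhere. I would flag this subtlety explicitly but not dwell on it, in keeping with the formal level adopted throughout.
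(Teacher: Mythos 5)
Your proposal is correct and follows essentially the same route as the paper: the Lagrange-multiplier condition with the constant observable absorbed as $(1-w)\,dV(\lambda)$, the first-variation formulas \eqref{eq:dSlambda0-alpha}--\eqref{eq:dCOFi-alpha} yielding the pointwise identity $\log f_{\lambda;\lambda_0} = -w + \sum_i p_i F_i$, and the normalization fixing $w$ via \eqref{eq:w}. In fact you are slightly more complete than the paper, which stops at \eqref{eq:w} and declares the proposition proved, leaving the differentiation under the integral sign for item (2) and the substitution for item (3) implicit, whereas you spell both out along with the density argument needed to pass from the integral identity to the pointwise one.
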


A natural question to ask is the following.

\begin{ques} 
Obviously $\rho_{p;\lambda_0}$ is an equilibrium state when $p = 0$.
When will the equilibrium state $\rho_{p;\lambda_0}$ be of the form $\psi_*\mu_{\lambda_0}$
for some diffeomorphism $\psi$ \emph{for $p \neq 0$?} If not for diffeomorphism $\psi$, will it be possible if we reduce the regularity of $\psi$ to find one such?
\end{ques}

\appendix

\section{Proof of Proposition \ref{prop:Hessian-big}}
\label{sec:3-terms}

In this appendix, we prove Proposition \ref{prop:Hessian-big}. In our calculation, the following
 general lemma will be frequently used.

\begin{lem}\label{lem:useful1} For any vector field $Y^\pi$ tangent to $\xi$, we have
$$
(\CL_{Y^\pi} \lambda) \wedge (d\lambda)^n = 0.
$$
\end{lem}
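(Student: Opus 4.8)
The plan is to reduce the statement to a purely algebraic contraction identity, after first simplifying the Lie derivative using the tangency hypothesis. First I would invoke Cartan's magic formula,
$$
\CL_{Y^\pi}\lambda = d(Y^\pi \intprod \lambda) + Y^\pi \intprod d\lambda.
$$
Since $Y^\pi$ is tangent to $\xi = \ker\lambda$, we have $Y^\pi \intprod \lambda = \lambda(Y^\pi) = 0$ identically on $M$, so the exact term drops out and we are left with the clean identity $\CL_{Y^\pi}\lambda = Y^\pi \intprod d\lambda$. This tangency step is exactly where the hypothesis is used: for a general vector field the term $d(\lambda(Y)) \wedge (d\lambda)^n$ need not vanish, so the statement genuinely requires $Y^\pi$ to lie in $\xi$.

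It then remains to show that $(Y^\pi \intprod d\lambda) \wedge (d\lambda)^n = 0$. The key observation is a \emph{dimension count}: $(d\lambda)^{n+1}$ is a form of degree $2n+2$, which exceeds $\dim M = 2n+1$, so $(d\lambda)^{n+1} = 0$. Next I would contract this vanishing top power with $Y^\pi$. Because the interior product $Y^\pi \intprod (\,\cdot\,)$ is an anti-derivation and $d\lambda$ has even degree, all the $n+1$ Leibniz terms carry positive sign and coincide, giving
$$
0 = Y^\pi \intprod (d\lambda)^{n+1} = (n+1)\,(Y^\pi \intprod d\lambda) \wedge (d\lambda)^n.
$$
Dividing by the nonzero integer $n+1$ yields the desired identity, and combining it with the first step gives $(\CL_{Y^\pi}\lambda) \wedge (d\lambda)^n = 0$.

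I expect no serious obstacle here; the only point requiring a little care is the combinatorial factor $n+1$ in the Leibniz expansion. One verifies it by noting that the even degree of $d\lambda$ makes every sign in the anti-derivation rule positive, while the symmetry of the wedge power makes each of the $n+1$ resulting terms equal to $(Y^\pi \intprod d\lambda) \wedge (d\lambda)^n$.
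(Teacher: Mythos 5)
Your proof is correct, and it finishes differently from the paper's. The first step---Cartan's formula together with $\lambda(Y^\pi)=0$ to reduce the claim to $(Y^\pi \intprod d\lambda)\wedge(d\lambda)^n = 0$---is exactly the paper's opening move. Where you diverge is in the second half: the paper notes that $(Y^\pi\intprod d\lambda)(R_\lambda) = d\lambda(Y^\pi, R_\lambda) = 0$, so that $Y^\pi \intprod d\lambda$ lies in the annihilator of $R_\lambda$ under the splitting $TM = \xi \oplus \R\langle R_\lambda\rangle$, and then argues in a Darboux frame that $(d\lambda)^n$ already exhausts all covectors annihilating $R_\lambda$, so wedging with one more such covector gives zero. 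You instead contract the identity $(d\lambda)^{n+1} = 0$ (a form of degree $2n+2$ on a $(2n+1)$-manifold) with $Y^\pi$, using that the interior product is an anti-derivation and that $d\lambda$ has even degree, to get $(n+1)\,(Y^\pi\intprod d\lambda)\wedge(d\lambda)^n = 0$; your combinatorial factor is right, since each Leibniz term carries sign $+1$ and the terms coincide because a $1$-form commutes with a $2$-form under wedge. Your finish is purely exterior-algebraic: it needs no frame and no Reeb vector field, and it actually proves the stronger statement that $(Y\intprod d\lambda)\wedge(d\lambda)^n = 0$ for an \emph{arbitrary} vector field $Y$, with the tangency hypothesis entering only through the Cartan step, as you correctly point out. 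The paper's finish, by contrast, leans on the contact-geometric decomposition $TM = \xi \oplus \R\langle R_\lambda\rangle$ that is used throughout the surrounding appendix, so it is stylistically of a piece with those computations, but it is less self-contained (it appeals to a Darboux frame) and obscures the fact that the second half of the lemma has nothing to do with $Y^\pi$ being tangent to $\xi$.
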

\begin{proof}
By Cartan's magic formula and $\lambda(Y^\pi) = 0$, we derive
$$
(\CL_{Y^\pi} \lambda) \wedge (d\lambda)^n = (Y^\pi \intprod d\lambda) \wedge (d\lambda)^n.
$$
Note that $(Y^\pi \intprod d\lambda)(R_\lambda) = 0$. Therefore we conclude 
The one form $Y^\pi \intprod d\lambda$ should be a linear combination of  
$\xi^* = (R_\lambda)^\perp$ under the decompostion $TM = \xi \oplus \R \langle R_\lambda \rangle$.
Therefore the lemma follows because the $2n$-form $(d\lambda)^n$ already exhausts all
such one-forms in its wedge product expansion, say in Darboux frame.
\end{proof}
%
%

The following lemma  will be also needed.

\begin{lem} \label{lem:2-terms} We have
\bea
 n\CL_{Y_1^\pi}\lambda \wedge \CL_{Y_2^\pi}d\lambda \wedge (d\lambda)^{n-1} & = & 
- \CL_{Y_2^\pi}\CL_{Y_1^\pi}\lambda \wedge (d\lambda)^n  \label{eq:first-term}\\
  n \CL_{Y_1^\pi}\lambda \wedge \CL_{Y_2^\pi}d\lambda \wedge (d\lambda)^{n-1} & = & 
- \CL_{Y_1^\pi}\CL_{Y_2^\pi}\lambda \wedge (d\lambda)^n
  \label{eq:second-term}
\eea
\end{lem}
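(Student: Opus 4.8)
The plan is to obtain both identities by differentiating the pointwise relation of Lemma \ref{lem:useful1} along a second vector field, so that no fresh Darboux computation is needed beyond the one already packaged in that lemma. Recall that Lemma \ref{lem:useful1} asserts $(\CL_{W}\lambda)\wedge(d\lambda)^n = 0$ for every $W$ tangent to $\xi$; I would apply it with $W = Y_1^\pi$ for the first identity and with $W = Y_2^\pi$ for the second.

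First I would record the two elementary facts that drive the computation: the Lie derivative is an even derivation on forms, so $\CL_{Y^\pi}(\alpha\wedge\beta) = (\CL_{Y^\pi}\alpha)\wedge\beta + \alpha\wedge(\CL_{Y^\pi}\beta)$, and consequently $\CL_{Y^\pi}\big((d\lambda)^n\big) = n\,(\CL_{Y^\pi}d\lambda)\wedge(d\lambda)^{n-1}$ (all $n$ factors being equal, with no signs since $\CL$ preserves degree).

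For \eqref{eq:first-term} I would start from $\CL_{Y_1^\pi}\lambda\wedge(d\lambda)^n = 0$ and apply $\CL_{Y_2^\pi}$. The Leibniz rule gives $(\CL_{Y_2^\pi}\CL_{Y_1^\pi}\lambda)\wedge(d\lambda)^n + n\,(\CL_{Y_1^\pi}\lambda)\wedge(\CL_{Y_2^\pi}d\lambda)\wedge(d\lambda)^{n-1} = 0$, which is exactly \eqref{eq:first-term} after transposing. For \eqref{eq:second-term} the argument is identical with the roles of $Y_1^\pi$ and $Y_2^\pi$ interchanged: differentiating $\CL_{Y_2^\pi}\lambda\wedge(d\lambda)^n=0$ along $Y_1^\pi$ produces $n\,(\CL_{Y_2^\pi}\lambda)\wedge(\CL_{Y_1^\pi}d\lambda)\wedge(d\lambda)^{n-1} = -(\CL_{Y_1^\pi}\CL_{Y_2^\pi}\lambda)\wedge(d\lambda)^n$. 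These two expressions are precisely the third and fourth wedge terms of \eqref{eq:d2vol-lambda}, which is how the lemma is meant to be used.

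There is no genuine analytic obstacle here; the content is entirely the derivation property of $\CL$ together with Lemma \ref{lem:useful1}. The one point that needs care is the bookkeeping of indices: the two identities are genuinely distinct, not symmetrizations of one another, and in particular the left-hand side of \eqref{eq:second-term} should be read as $n\,\CL_{Y_2^\pi}\lambda\wedge\CL_{Y_1^\pi}d\lambda\wedge(d\lambda)^{n-1}$, i.e. with $Y_1^\pi, Y_2^\pi$ exchanged relative to \eqref{eq:first-term}. Otherwise the two displays would share the same left-hand side and force $(\CL_{Y_1^\pi}\CL_{Y_2^\pi}\lambda - \CL_{Y_2^\pi}\CL_{Y_1^\pi}\lambda)\wedge(d\lambda)^n = \CL_{[Y_1^\pi,Y_2^\pi]}\lambda\wedge(d\lambda)^n = 0$, which fails in general: setting $c = \lambda([Y_1^\pi,Y_2^\pi])$ and splitting $[Y_1^\pi,Y_2^\pi]$ into its $\xi$- and Reeb-components, one computes $\CL_{[Y_1^\pi,Y_2^\pi]}\lambda\wedge(d\lambda)^n = R_\lambda[c]\,\mu_\lambda$, which is nonzero for a typical pair of sections of $\xi$.
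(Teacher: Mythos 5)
Your proof is correct and is essentially the paper's own argument: both obtain \eqref{eq:first-term} by applying $\CL_{Y_2^\pi}$ to the identity $(\CL_{Y_1^\pi}\lambda)\wedge(d\lambda)^n=0$ of Lemma \ref{lem:useful1} and invoking the Leibniz rule (the paper phrases this as $n\,\CL_{Y_1^\pi}\lambda\wedge\CL_{Y_2^\pi}d\lambda\wedge(d\lambda)^{n-1}=\CL_{Y_1^\pi}\lambda\wedge\CL_{Y_2^\pi}(d\lambda)^n=-\CL_{Y_2^\pi}\CL_{Y_1^\pi}\lambda\wedge(d\lambda)^n$), and both get the second identity by exchanging the roles of $Y_1^\pi$ and $Y_2^\pi$. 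Your correction of the left-hand side of \eqref{eq:second-term} (indices swapped, so that it matches the corresponding term of \eqref{eq:d2vol-lambda}) is exactly the paper's intent --- its proof says ``the second is the same by swapping 1 and 2'' --- and your commutator computation $\CL_{[Y_1^\pi,Y_2^\pi]}\lambda\wedge(d\lambda)^n=R_\lambda[c]\,\mu_\lambda$, $c=\lambda([Y_1^\pi,Y_2^\pi])$, correctly shows the lemma would be false as literally printed.
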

\begin{proof} Utilizing Lemma \ref{lem:useful1}, we derive
\beastar
 n\CL_{Y_1^\pi}\lambda \wedge \CL_{Y_2^\pi}d\lambda \wedge (d\lambda)^{n-1}
 & = & \CL_{Y_1^\pi}\lambda \wedge \CL_{Y_2^\pi} (d\lambda)^n \\
 & = & -\CL_{Y_2^\pi}\CL_{Y_1^\pi}\lambda \wedge (d\lambda)^n
 \eeastar
 which proves the first equality. The second is the same by swapping 1 and 2.
\end{proof}

Using the latter lemma, we first transform \eqref{eq:d2vol-lambda} into 
\beastar
 d^2\vol(\lambda)(\alpha_1, \alpha_2) 
& = & n(n+1) h_1 h_2 \, \mu_\lambda + nh_1 \nabla\cdot Y_2^\pi\, \mu_\lambda 
+ nh_2 \nabla\cdot Y_1^\pi\, \mu_\lambda\\
&{}& + (Y_1^\pi[h_2] + Y_2^\pi[h_1])\, \mu_\lambda \\
&{}&  - \CL_{Y_2^\pi}\CL_{Y_1^\pi}\lambda \wedge (d\lambda)^n 
- \CL_{Y_1^\pi}\CL_{Y_2^\pi}\lambda \wedge (d\lambda)^n \\
&{}& + n(n-1)\lambda \wedge (\CL_{Y_1^\pi}d\lambda) \wedge (\CL_{Y_2^\pi}d\lambda) \wedge (d\lambda)^{n-2}.
\eeastar

The remaining section will be occupied by the proof of Proposition \ref{prop:Hessian-big}.
 
\begin{proof}[Proof of Proposition \ref{prop:Hessian-big}] 
Put $g=1+\log f$ and evaluate
$$
\int_M g h_1 \nabla \cdot Y_2^\pi \, d\mu_\lambda = \int_M g h_1 \CL_{Y_2^\pi}(d\mu_\lambda)
= - \int_M Y_2^\pi[ g h_1]\, d\mu_\lambda
$$
and similarly
$$
\int_M g h_2 \nabla \cdot Y_1^\pi \, d\mu_\lambda 
= - \int_M Y_1^\pi[g h_2]\, d\mu_\lambda.
$$
This computes the integral
$$
n \int_M g(h_1 \nabla \cdot Y_2^\pi + h_2 \nabla\cdot Y_1^\pi) \, d\mu_\lambda
=- n \int_M ( Y_2^\pi[g h_1] +  Y_1^\pi[g h_2])\, d\mu_\lambda.
$$
By adding the integral another term  $g (Y_2^\pi[h_1] +  Y_1^\pi[h_2])$,
this computation give rises to the third term of \eqref{eq:Hessian-big}.

Now we put $g = 1+ \log f$.
Most of the remaining section will be occupied by the evaluation of 
\be\label{eq:3rd-term}
\int_M n(n-1)g \lambda \wedge (\CL_{Y_1^\pi}d\lambda) \wedge (\CL_{Y_2^\pi}d\lambda) \wedge (d\lambda)^{n-2}.
\ee
We rewrite
\bea\label{eq:3rd-1}
&{}& n(n-1)g \lambda \wedge (\CL_{Y_1^\pi}d\lambda) \wedge (\CL_{Y_2^\pi}d\lambda) \wedge (d\lambda)^{n-2}
\nonumber\\
& = & n\lambda \wedge (g \CL_{Y_1^\pi}d\lambda) \wedge \CL_{Y_2^\pi}(d\lambda)^{n-1} \nonumber\\
& = & n \CL_{Y_2^\pi} \left(g \lambda \wedge (\CL_{Y_1^\pi}d\lambda) \wedge (d\lambda)^{n-1}\right)
\nonumber \\
&{}& - n \CL_{Y_2^\pi} \left(g \lambda \wedge (\CL_{Y_1^\pi}d\lambda)\right) \wedge (d\lambda)^{n-1}.
\eea
The first term of \eqref{eq:3rd-1} will vanish after integration.
For the second term thereof, we rewrite
\bea\label{eq:3rd-1-2}
&{}& - n \CL_{Y_2^\pi} \left(g \lambda \wedge (\CL_{Y_1^\pi}d\lambda)\right) \wedge (d\lambda)^{n-1}
\nonumber\\
& = & -n g \lambda \wedge \CL_{Y_2^\pi} (\CL_{Y_1^\pi}d\lambda)\wedge (d\lambda)^{n-1}
-  \CL_{Y_2^\pi}(g\lambda) \wedge \CL_{Y_1^\pi} (d\lambda)^n.
\eea
We rewrite the second term of \eqref{eq:3rd-1-2} into
$$
-  \CL_{Y_2^\pi}(g\lambda) \wedge \CL_{Y_1^\pi} (d\lambda)^n\\
=  - \CL_{Y_1^\pi}\left(\CL_{Y_2^\pi}(g\lambda) \wedge (d\lambda)^n\right)
 +  \left(\CL_{Y_1^\pi}\CL_{Y_2^\pi} (g\lambda) \right) \wedge (d\lambda)^n.
$$
On the other hand, we rewrite the first term of \eqref{eq:3rd-1-2} into
\beastar
&{}&
 -n (\CL_{Y_2^\pi} \CL_{Y_1^\pi}d\lambda)\wedge g\lambda \wedge(d\lambda)^{n-1}\\
 & = & -n d(\CL_{Y_2^\pi} \CL_{Y_1^\pi}\lambda)\wedge g\lambda \wedge(d\lambda)^{n-1}\\
 & = & -n d\left((\CL_{Y_2^\pi} \CL_{Y_1^\pi}\lambda)\wedge g\lambda \wedge(d\lambda)^{n-1}\right)
  -n \left((\CL_{Y_2^\pi} \CL_{Y_1^\pi}\lambda)\wedge d(g\lambda) \wedge(d\lambda)^{n-1}\right)\\
   & = & -n d\left((\CL_{Y_2^\pi} \CL_{Y_1^\pi}\lambda)\wedge g\lambda \wedge(d\lambda)^{n-1}\right)
  -n g((\CL_{Y_2^\pi} \CL_{Y_1^\pi}\lambda)\wedge (d\lambda)^n\\
  &{}&  -n (\CL_{Y_2^\pi} \CL_{Y_1^\pi}\lambda)\wedge dg \wedge \lambda \wedge (d\lambda)^{n-1}
\eeastar
Substituting the result into the first and the second terms of \eqref{eq:3rd-1}, after integration, we have derived
\bea\label{eq:3rd-3}
&{}&
\int_M n(n-1)g \lambda \wedge (\CL_{Y_1^\pi}d\lambda) \wedge (\CL_{Y_2^\pi}d\lambda) \wedge (d\lambda)^{n-2}
\nonumber\\
& = & \int_M \left(\CL_{Y_1^\pi}\CL_{Y_2^\pi}(g\lambda )\right) \wedge (d\lambda)^n 
 -n \int_M g  (\CL_{Y_2^\pi} \CL_{Y_1^\pi} \lambda) \wedge (d\lambda)^n \nonumber\\
&{}& - n\int_M \left((\CL_{Y_2^\pi} \CL_{Y_1^\pi}\lambda)\wedge dg\wedge \lambda \wedge
(d\lambda)^{n-1}\right)
\eea
Now we compute each of the three summands separately.

For the third term, we write $dg = X_g \intprod d\lambda + R_\lambda[g] \, \lambda$ and compute
\bea\label{eq:3rd-3-3}
&{}& 
-n (\CL_{Y_2^\pi} \CL_{Y_1^\pi}\lambda)\wedge dg\wedge \lambda \wedge
(d\lambda)^{n-1} \nonumber\\
& = & n  (\CL_{Y_2^\pi} \CL_{Y_1^\pi}\lambda)\wedge  \lambda \wedge (X_g^\pi \intprod d\lambda) \wedge (d\lambda)^{n-1}\nonumber\\
& = &  (\CL_{Y_2^\pi} \CL_{Y_1^\pi}\lambda) \wedge  \lambda \wedge (X_g^\pi \intprod  (d\lambda)^n)
\nonumber \\
& = & - \left( X_g^\pi \intprod (\CL_{Y_2^\pi} \CL_{Y_1^\pi}\lambda) \right) \, \mu_\lambda
+  (\CL_{Y_2^\pi} \CL_{Y_1^\pi}\lambda) \wedge (X_g^\pi \intprod  \lambda) \wedge  (d\lambda)^n
\nonumber \\
 & = & - \left( X_g^\pi \intprod (\CL_{Y_2^\pi} \CL_{Y_1^\pi}\lambda) \right) \, \mu_\lambda.
  \eea
Repeatedly using Cartan's formula and $Y_i^\pi \in \xi$, we further compute the first term
of \eqref{eq:3rd-3-3}
\beastar
X_g^\pi \intprod (\CL_{Y_2^\pi} \CL_{Y_1^\pi}\lambda) & = & (\CL_{Y_2^\pi} \CL_{Y_1^\pi}\lambda)(X_g^\pi)\\
& = &  \CL_{Y_2^\pi}( \CL_{Y_1^\pi}\lambda(X_g^\pi)) - (\CL_{Y_1^\pi} \lambda)([Y_2^\pi,X_g^\pi]) \\
& = &  \CL_{Y_2^\pi}(d\lambda(Y_1^\pi,X_g^\pi))- d\lambda(Y_1^\pi, [Y_2^\pi,X_g^\pi]) \\
& = &  -\CL_{Y_2^\pi}(\lambda([Y_1^\pi,X_g^\pi]))  - d\lambda(Y_1^\pi, [Y_2^\pi,X_g^\pi]) \\
& = & -d\lambda(Y_2^\pi, [Y_1^\pi,X_g^\pi]))- \lambda([Y_2^\pi, [Y_1^\pi,X_g^\pi])])
 - d\lambda(Y_1^\pi, [Y_2^\pi,X_g^\pi]).
\eeastar
Therefore we obtain
\bea\label{eq:3rd-3-4}
&{}& 
\int_M - n (\CL_{Y_2^\pi} \CL_{Y_1^\pi}\lambda)\wedge dg\wedge \lambda \wedge
(d\lambda)^{n-1} \nonumber\\
& = & \int_M  d\lambda(Y_2^\pi, [Y_1^\pi,X_g^\pi])) + \lambda([Y_2^\pi, [Y_1^\pi,X_g^\pi])])
 + d\lambda(Y_1^\pi, [Y_2^\pi,X_g^\pi])\, d\mu_\lambda.
\eea
 
For the evaluation of the second term
of \eqref{eq:3rd-3},  we rewrite
\beastar
(\CL_{Y_2^\pi}\CL_{Y_1^\pi} \lambda)(R_\lambda) & = & \CL_{Y_2^\pi}(\CL_{Y_1^\pi}\lambda(R_\lambda)) 
- (\CL_{Y_1^\pi}\lambda)([Y_2^\pi,R_\lambda]) \\
& = & \CL_{Y_2^\pi}(\CL_{Y_1^\pi}\lambda(R_\lambda)) + \lambda([Y_1^\pi, [Y_2^\pi,R_\lambda]])
=  \lambda([Y_1^\pi, [Y_2^\pi,R_\lambda]])
\eeastar
 since $\CL_{Y_1^\pi}\lambda(R_\lambda) = \CL_{Y_1^\pi}(\lambda(R_\lambda)) - \lambda([Y_1^\pi,R_\lambda]) = 0$ 
as $\lambda(R_\lambda) = 1$ and $[Y_1^\pi,R_\lambda] \in \xi$. Therefore we obtain
\be\label{eq:3rd-3-2}
-n \int_M g (\CL_{Y_2^\pi}\CL_{Y_1^\pi}\lambda)\wedge (d\lambda)^n 
 = - n \int_M  g \lambda([Y_1^\pi, [Y_2^\pi,R_\lambda]]) \, d\mu_\lambda.
\ee

For the first integral of \eqref{eq:3rd-3}, we rewrite its integrand into
\beastar
&{}&
 \left(\CL_{Y_1^\pi}\CL_{Y_2^\pi}(g\lambda) \right) \wedge (d\lambda)^n\\
& = & \CL_{Y_1^\pi}\left(g \CL_{Y_2^\pi}\lambda + \CL_{Y_2^\pi}(g)\lambda\right) \wedge (d\lambda)^n \\
& = & g(\CL_{Y_1^\pi}\CL_{Y_2^\pi}\lambda) \wedge (d\lambda)^n +( \CL_{Y_1^\pi}g) (\CL_{Y_2^\pi}\lambda) \wedge (d\lambda)^n \\
&{}& + \CL_{Y_1^\pi}(\CL_{Y_2^\pi}(g))\lambda \wedge (d\lambda)^n +(\CL_{Y_2^\pi}g) (\CL_{Y_1^\pi}\lambda) \wedge (d\lambda)^n \\
& = & g (\CL_{Y_1^\pi}\CL_{Y_2^\pi}\lambda) \wedge (d\lambda)^n+ \CL_{Y_1^\pi}(\CL_{Y_2^\pi}(g))\lambda \wedge (d\lambda)^n
\eeastar
where the last equality comes from Lemma \ref{lem:useful1}.
The first term, after integration, becomes
$$
\int_M  g \CL_{Y_1^\pi}\CL_{Y_2^\pi}\lambda \wedge (d\lambda)^n = \int_M  g \lambda([Y_2^\pi, [Y_1^\pi,R_\lambda]]) \, d\mu_\lambda
$$
 similarly as above with 1 and 2 swapped
in the evaluation of the third term. 
For the second term, we compute
\beastar
 \CL_{Y_1^\pi}(\CL_{Y_2^\pi}(g)) & =  & \CL_{Y_1^\pi}(d\lambda(X_g,Y_2^\pi))
= - \CL_{Y_1^\pi}(\lambda([X_g^\pi,Y_2^\pi]) \\
& = & - (\CL_{Y_1^\pi}\lambda) ([X_g^\pi,Y_2^\pi]) 
 -  \lambda([Y_1^\pi,[X_g^\pi, Y_2^\pi]]) \\
& = &
 - d\lambda(Y_1^\pi, [X_g^\pi,Y_2^\pi]) -  \lambda([Y_1^\pi,[X_g^\pi, Y_2^\pi]]).
\eeastar
Therefore
\bea\label{eq:3rd-3-1}
\int_M \left(\CL_{Y_1^\pi}\CL_{Y_2^\pi}(g\lambda )\right) \wedge (d\lambda)^n 
= \int_M  g \lambda([Y_2^\pi, [Y_1^\pi,R_\lambda]]) \, d\mu_\lambda\nonumber\\
 -\int_M  (d\lambda(Y_1^\pi, [X_g^\pi,Y_2^\pi]) + \lambda([Y_1^\pi,[X_g^\pi, Y_2^\pi]]))\, d\mu_\lambda.
\eea

Combining \eqref{eq:3rd-3-4}, \eqref{eq:3rd-3-2} and \eqref{eq:3rd-3-1}, we have derived 
\bea\label{eq:3rd-4}
&{}& 
\int_M n(n-1)g \lambda \wedge (\CL_{Y_1^\pi}d\lambda) \wedge (\CL_{Y_2^\pi}d\lambda) \wedge (d\lambda)^{n-2} \nonumber\\
& = &    \int_M (d\lambda(Y_2^\pi,[Y_1^\pi,X_g^\pi] + d\lambda(Y_1^\pi,[Y_2^\pi,X_g^\pi]) \, d\mu_\lambda \nonumber \\
&{}& +\int_M  (\lambda([Y_2^\pi,[Y_1^\pi,X_g^\pi]]) + \lambda([Y_1^\pi,[Y_2^\pi,X_g^\pi]]) \, d\mu_\lambda \nonumber\\
&{}& + \int_M-  n g \lambda([Y_1^\pi, [Y_2^\pi,R_\lambda]]) +  g \lambda([Y_2^\pi, [Y_1^\pi,R_\lambda]]) \, d\mu_\lambda
\nonumber \\
&{}& +\int_M d\lambda(Y_1^\pi,[Y_2^\pi,X_g^\pi]]\big)\, d\mu_\lambda.
\eea
\end{proof}


\section{Proof of Legendrian property of thermodynamic equilibrium}
\label{sec:Legendrian-property}

In this appendix, we give the proof of Legendrian property of the immersion
$\iota_{\CS_{\lambda_0}}: \Sigma_{\CS_{\lambda_0};\CF} \to J^1B_\CF$. This applies 
both to the case of the big and small phase spaces.

This is a standard fact in the finite dimensional case. (See \cite[Section 2.3]{duistermaat},
 for example, for the case of Symplectic/Lagrangian.)
Since we are dealing with the infinite dimensional situation, we provide a coordinate
independent covariant proof, for readers' convenience.

Denote by $\theta= \sum_{i=1}^N p_i dq_i$ the canonical Liouville one-form on the cotangent bundle $T^*B_\CF$,
and by $\pr: J^1 B_{\CF} \to T^*B_{\CF}$ and $\pi: T^* B_{\CF} \to B_{\CF}$ the natural projections.

\begin{prop}[Compare with Proposition 6.1 in \cite{lim-oh}] \label{prop:potential}
Let $$
\Lambda = dz - \pr^* \theta
$$
be the standard contact form on the one-jet bundle $J^1B_{\CF}$.
The contact thermodynamic equilibrium $R_{\CS_{\lambda_0};\CF}$ is a finite dimensional
(immersed) Legendrian submanifold of $J^1B_\CF$ of dimension $N$.
\end{prop}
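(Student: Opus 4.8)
The plan is to verify the two defining properties of a Legendrian submanifold separately: the dimension count and the vanishing of the contact form pulled back along $\iota_{\CS_{\lambda_0}}$. Write $\Sigma := \Sigma_{\CS_{\lambda_0};\CF}$ for brevity. The dimension is already in hand: by Corollary \ref{cor:dimension} the vertical critical set $\Sigma$ is a smooth $N$-dimensional submanifold, and the five-lemma argument there shows that $d_\lambda\CO_\CF|_\Sigma : T_\lambda\Sigma \to T_{\CO_\CF(\lambda)}B_\CF$ is an isomorphism. Since $\CO_\CF|_\Sigma$ is exactly the first component of $\iota_{\CS_{\lambda_0}}$ in \eqref{eq:iota-CSCF}, the differential $d\iota_{\CS_{\lambda_0}}$ is already injective on its first factor, so $\iota_{\CS_{\lambda_0}}$ is an immersion and $R_{\CS_{\lambda_0};\CF} = \Image\,\iota_{\CS_{\lambda_0}}$ is an $N$-dimensional immersed submanifold, of the correct half-dimension inside $J^1 B_\CF$ (which has dimension $2N+1$).

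Next I would carry out the pullback computation, which is the heart of the statement. Writing the fibre coordinates of $J^1 B_\CF$ as $(q,p,z)$, the components of $\iota_{\CS_{\lambda_0}}$ are $q_i\circ\iota_{\CS_{\lambda_0}} = \CO_{F_i}|_\Sigma$, $p_i\circ\iota_{\CS_{\lambda_0}} = (D^h\CS_{\lambda_0})_i$, and $z\circ\iota_{\CS_{\lambda_0}} = \CS_{\lambda_0}|_\Sigma$. Pulling back $\Lambda = dz - \pr^*\theta$ with $\theta = \sum_{i=1}^N p_i\,dq_i$ therefore gives
\be
\iota_{\CS_{\lambda_0}}^*\Lambda = d\bigl(\CS_{\lambda_0}|_\Sigma\bigr) - \sum_{i=1}^N p_i \, d\bigl(\CO_{F_i}|_\Sigma\bigr),
\ee
where now $p_i$ is the function $\lambda \mapsto (D^h\CS_{\lambda_0}(\lambda))_i$ on $\Sigma$. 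Now I would invoke the conormal (vertical-criticality) identity \eqref{eq:conormal-condition}: at every $\lambda \in \Sigma$ one has the covector equality $d\CS_{\lambda_0}(\lambda) = \sum_i p_i(\lambda)\,d\CO_{F_i}(\lambda)$ in $T_\lambda^*\mathfrak{C}^+(M)$. Restricting this identity to the subspace $T_\lambda\Sigma$ and using that the restriction of $d\CS_{\lambda_0}(\lambda)$ to $T_\lambda\Sigma$ is precisely $d(\CS_{\lambda_0}|_\Sigma)(\lambda)$ (and likewise for each $\CO_{F_i}$), the subtracted term above equals $d(\CS_{\lambda_0}|_\Sigma)$ pointwise, so $\iota_{\CS_{\lambda_0}}^*\Lambda = 0$. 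It is worth emphasizing that no product-rule term in the varying multipliers $p_i$ appears, precisely because $\Lambda$ contains $p_i\,dq_i$ rather than $d(p_i q_i)$; this is the standard mechanism by which a Morse family generates a Legendrian, and it is the exact analogue of Proposition 6.1 in \cite{lim-oh}.

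The main obstacle is not the algebra above but the justification that the ingredients are honestly smooth in the infinite-dimensional model. I would need the branch of Lagrange multipliers $\lambda \mapsto (p_1(\lambda),\dots,p_N(\lambda))$ to depend smoothly on $\lambda$; this is supplied by the linear independence of $\{d\CO_{F_i}(\lambda)\}$ from Proposition \ref{prop:regularity} (resp. Proposition \ref{prop:regularity-small}), together with the relative-Morse property and the identification of $\Sigma$ as a smooth manifold from the transversality results (Proposition \ref{prop:SSsm-transversality} and Proposition \ref{prop:SSbig-transversality}, with Corollary \ref{cor:dimension}). As flagged repeatedly in the text, to make $\Sigma$, the immersion $\iota_{\CS_{\lambda_0}}$, and the form $\Lambda$ rigorous one should first pass to a suitable Banach or Sobolev completion, or to the convenient calculus of \cite{kriegl-michor}; once that framework is fixed, the computation of $\iota_{\CS_{\lambda_0}}^*\Lambda$ and the immersion statement go through verbatim, and the finite-dimensionality of $B_\CF$ guarantees that $R_{\CS_{\lambda_0};\CF}$ genuinely lands in the finite-dimensional contact manifold $J^1 B_\CF$.
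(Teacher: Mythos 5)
Your proposal is correct, but it proves the key vanishing $\iota_{\CS_{\lambda_0}}^*\Lambda = 0$ by a different route than the paper. You compute in the global canonical coordinates $(q,p,z)$ of $J^1B_\CF \subseteq J^1\R^N$, writing $\iota_{\CS_{\lambda_0}}^*(\pr^*\theta) = \sum_i (p_i\circ\iota_{\CS_{\lambda_0}})\, d(\CO_{F_i}|_\Sigma)$ and then feeding in the Lagrange-multiplier identity \eqref{eq:conormal-condition} restricted to $T_\lambda\Sigma_{\CS_{\lambda_0};\CF}$; this is exactly the classical finite-dimensional generating-function computation which the paper's appendix deliberately avoids (it announces a ``coordinate independent covariant proof'' on account of the infinite-dimensional domain). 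The paper instead pulls back $\theta$ through its tautological property, $(\pr\circ\iota_{\CS_{\lambda_0}})^*\theta(\lambda)(v) = \pr\left(\iota_{\CS_{\lambda_0}}(\lambda)\right)\left(d(\pi\circ\pr\circ\iota_{\CS_{\lambda_0}})(v)\right) = D^h\CS_{\lambda_0}(\lambda)(v)$, and then uses $D^h\CS_{\lambda_0} = d\CS_{\lambda_0}$ on $\Sigma_{\CS_{\lambda_0};\CF}$ (vertical criticality) to identify this with $df$. The two arguments are mathematically equivalent --- your conormal identity is precisely the statement that vertical criticality forces $d\CS_{\lambda_0}(\lambda)$ to be pulled back from the base, with multipliers given by the components of $D^h\CS_{\lambda_0}(\lambda)$ --- but each buys something. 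Yours makes transparent why no $dp_i$ terms arise (the standard Morse-family mechanism, as you note), and it is legitimate here despite the infinite-dimensional domain because the target is finite dimensional with global coordinates, so only functoriality of pullback is invoked on the $\Sigma$ side. The paper's version never chooses coordinates and makes manifest that the construction is independent of the Ehresmann connection. You also supply an explicit immersion argument (injectivity of $d\iota_{\CS_{\lambda_0}}$ forced by the isomorphism $d_\lambda\CO_\CF|_\Sigma$ from Corollary \ref{cor:dimension}), which the paper leaves implicit in its proof and only touches on in the remark following the proposition; that is a small but genuine addition, and your caveats about completions mirror the paper's own.
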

\begin{proof} 
We consider the map
$$
\iota_{\CS_{\lambda_0}}  : \Sigma_{\CS_{\lambda_0};\CF}  \to J^1 B_\CF
$$
defined by
\be\label{eq:iota-defn}
\iota_{\CS_{\lambda_0}}  (\lambda) 
= \left(\CO_{\CF}(\lambda), D^h \CS_{\lambda_0} (\lambda), \CS_{\lambda_0}(\lambda)\right).
\ee
Here we use the identification
$$
(HT_{\lambda} \mathfrak C(M))^* \cong T^*_{\CO_{\CF}(\lambda)} B_\CF
$$
obtained by the Ehresman connection $T\mathfrak C^+(M) = HT\mathfrak C^+(M) \oplus VT\mathfrak C^+(M)$,
and the equality 
$$
D^h \CS_{\lambda_0} (\lambda) = d\CS_{\lambda_0} (\lambda)
$$
at $\lambda \in \Sigma_{\CS_{\lambda_0};\CF}$ 
on $\Sigma_{\CS_{\lambda_0};\CF}$ which proves that the value does not depend on the choice of connection. 
This shows that the map \eqref{eq:iota-defn} is well-defined.

Locally near a given $\lambda \in \Sigma_{\CS_{\lambda_0};\CF}$, we can express the 
image point $\iota_{\CS_{\lambda_0}}  (\lambda)$ in $J^1B_\CF \cong J^1\R^N$ 
in terms of the canonical coordinates  $(q_1,\cdots, q_N,p_1,\cdots, p_N,z)$: It is determined by
$$
q_i = \CO_{\CF_i}(\lambda), \quad i = 1, \cdots, N, \quad z = \CS_{\lambda_0}(\lambda)
$$
and
$(p_1, \cdots, p_N)$ is the Lagrange multiplier satisfying (\ref{eq:conormal-condition}).

Now, it suffices to show that $(\iota_{\CS_{\lambda_0}} )^*(\Lambda)=0$.
If we set $f = \CS_{\lambda_0}|_{\Sigma_{\CS_{\lambda_0};\CF} }$, then
we have
\be\label{eq:pi3}
(\iota_{\CS_{\lambda_0}} )^*(dz) = df.
\ee

Now we examine the map $\pr \circ \iota_{ \CS_{\lambda_0}} : \Sigma_{\CF ;\CS_{\lambda_0}}  \to T^*B_\CF$.
We have
$$
(\iota_{\CS_{\lambda_0}} )^*(\pr^* \theta) = (\pr \circ \iota_{\CS_{\lambda_0}} )^{*}\theta.
$$
By definition, we have $d^v\CS_{\lambda_0}(\lambda) = 0$ 
for all $\lambda \in \Sigma_{\CS_{\lambda_0};\CF} $. In particular, we have
$$
D^h\CS_{\lambda_0}(\lambda ) = d\CS_{\lambda_0}(\lambda).
$$
We now evaluate $\theta$ against $v \in T_\lambda (\Sigma_{\CS_{\lambda_0};\CF} )$ with $\beta \in T^*_q B_\CF$
\beastar
(\pr \circ \iota_{\CS_{\lambda_0}} )^*\theta(\lambda)(v) & = & 
\theta\left(\pr\left(\iota_{\CS_{\lambda_0}} (\lambda)\right)\left(d(\pr \circ \iota_{\CS_{\lambda_0}} )(v)\right)\right)\\
& = & \pr\left(\iota_{\CS_{\lambda_0}} (\lambda)\right) \left(d\pi \circ d(\pr \circ \iota_{\CS_{\lambda_0}} )(v)\right)\\
& = & \pr\left(\iota_{\CS_{\lambda_0}} (\lambda)\right) \left(d(\pi \circ \pr \circ \iota_{\CS_{\lambda_0}} )(v)\right)\\
& = & D^h\CS_{\lambda_0}(\lambda) \left( v \right).
\eeastar
Then we compute
\beastar
D^h\CS_{\lambda_0}(\lambda) (v)= d\CS_{\lambda_0}(\lambda) (v) = df(v)
\eeastar
for the function $f =\CS_{\lambda_0}|_{\Sigma_{\CS_{\lambda_0};\CF} }$. This proves
\be\label{eq:pi2}
(\iota_{\CS_{\lambda_0}} )^*\pr^*\theta = df
\ee
on $\Sigma_{\CF ; \CS_{\lambda_0}} $. Combining \eqref{eq:pi2} and \eqref{eq:pi3}, we have proved
$$
(\iota_{\CS_{\lambda_0}} )^*(\Lambda) = (\iota_{\CS_{\lambda_0}} )^*(dz) - (\iota_{\CS_{\lambda_0}} )^*(\pr^*\theta) = 0
$$
which finishes the proof of $(\iota_{\CS_{\lambda_0}})^*(\Lambda) = 0$.
\end{proof}

\begin{rem} As mentioned before, 
the preimage $\CO_{\CF}^{-1}(q)$ of the observation value $q =\CO_{\CF}(\lambda)$
contains multiple points some of which could have the same set of 
Lagrange multiplier $(p_1, \cdots, p_N)$ and the same entropy value
$\CS_{\lambda_0}(\lambda)$. In fact, this is the general reason why the Legendrian map
like \eqref{eq:iota-defn} can be an immersion, not an embedding in the generation of
Legendrian submanifolds in the case of Contact/Legndrian as well as in the case of Symplectic/Lagrangian 
in symplectic geometry.  However unlike the case of Symplectic/Lagrangian, thanks to the additional
condition for two points $\lambda, \, \lambda' \in \Sigma_{\CS_{\lambda_0;\CF}}$ 
$$
\CS_{\lambda_0;\CF}(\lambda) = \CS_{\lambda_0;\CF}(\lambda')
$$
to have the same image point under the Legendrian map $\iota_{\CS_{\lambda_0};\CF}$,
the occurrence of an immersed Legendrian submanifold is not a stable phenomenon
but a phenomenon of codimension 1 in the case of Contact/Legendrian 
while it is a stable one for the case of Symplectic/Lagrangian:
any immersed Legendrian submanifold can be made embedded by a $C^\infty$-small
perturbation. However they can appear in a generic one-parameter family of  Legendrian maps.
\end{rem}

\section{A sneak preview of \cite{do-oh:formalism}}
\label{sec:peek}

Here we would like to provide some outline on what kind of questions
the sequel \cite{do-oh:formalism} will systematically study via the thermodynamic formalism of
the contact dynamics  as a continuation of the present paper. A good summary for the standard
thermodynamic formalism of general dynamical systems is given in \cite{bruin} which we
refer non-expert readers thereto for definitions of various terms used in this appendix.

From now on, we consider a contact triad $(M, \lambda, J)$ and equip $M$ with
the associated triad metric $g$.  Denote by $d$ the associated distance function on $M$.

\subsection{Contact thermodynamic potential}

Note that by definition, we have 
$$\psi^*\mu_{\lambda}=\mu_{\psi^* \lambda}=e^{(n+1)g_{(\psi; \lambda)}}\mu_{\lambda}.$$
The following lemma is well-known.

\begin{lem}
For any $\psi, \phi \in \Cont _+(M, \xi)$, we have 
\be\label{eq:iteration}
g_{(\psi \phi; \lambda)}=g_{(\psi; \lambda)} \circ \phi + g_{(\phi; \lambda)},
\ee
where $\lambda\in \mathfrak C(M, \xi)$.
\end{lem}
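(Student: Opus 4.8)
The plan is to derive \eqref{eq:iteration} directly from the defining multiplicative relation $\psi^*\lambda = f_{(\psi;\lambda)}\,\lambda$ together with the functoriality (contravariance) of the pullback. First I would recall that, since $\psi,\phi\in\Cont_+(M,\xi)$ preserve the coorientation and $\lambda\in\mathfrak C(M,\xi)$, there are positive functions $f_{(\psi;\lambda)},f_{(\phi;\lambda)}\in C^\infty(M,\R_+)$ with $\psi^*\lambda=f_{(\psi;\lambda)}\,\lambda$ and $\phi^*\lambda=f_{(\phi;\lambda)}\,\lambda$, and that by definition $g_{(\psi;\lambda)}=\log f_{(\psi;\lambda)}$, as in \eqref{eq:conformal-exponent}.

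The key computation is to expand $(\psi\phi)^*\lambda$ using the order-reversal $(\psi\circ\phi)^*=\phi^*\circ\psi^*$. I would write
$$
(\psi\phi)^*\lambda=\phi^*\bigl(\psi^*\lambda\bigr)=\phi^*\bigl(f_{(\psi;\lambda)}\,\lambda\bigr)=\bigl(f_{(\psi;\lambda)}\circ\phi\bigr)\,\phi^*\lambda=\bigl(f_{(\psi;\lambda)}\circ\phi\bigr)\,f_{(\phi;\lambda)}\,\lambda,
$$
where the third equality uses the elementary rule $\phi^*(h\,\omega)=(h\circ\phi)\,\phi^*\omega$ valid for any function $h$ and form $\omega$. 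Since $\psi\phi$ is again a coorientation-preserving contactomorphism, one also has $(\psi\phi)^*\lambda=f_{(\psi\phi;\lambda)}\,\lambda$ with $f_{(\psi\phi;\lambda)}$ nowhere vanishing; comparing the two expressions and using that $\lambda$ is nowhere zero yields the multiplicative cocycle
$$
f_{(\psi\phi;\lambda)}=\bigl(f_{(\psi;\lambda)}\circ\phi\bigr)\,f_{(\phi;\lambda)}.
$$

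Finally I would take the logarithm of both sides and use $\log(u\circ\phi)=(\log u)\circ\phi$ to obtain
$$
g_{(\psi\phi;\lambda)}=\log f_{(\psi\phi;\lambda)}=(\log f_{(\psi;\lambda)})\circ\phi+\log f_{(\phi;\lambda)}=g_{(\psi;\lambda)}\circ\phi+g_{(\phi;\lambda)},
$$
which is exactly \eqref{eq:iteration}. There is essentially no serious obstacle here: the statement is a formal consequence of the cocycle property of pullbacks, and the only point requiring genuine care is to respect the order-reversal $(\psi\circ\phi)^*=\phi^*\circ\psi^*$, since this is precisely what produces the composition $g_{(\psi;\lambda)}\circ\phi$ (rather than $g_{(\phi;\lambda)}\circ\psi$) in the first summand and fixes the asymmetric form of the identity.
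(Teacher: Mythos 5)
Your proof is correct: the pullback cocycle computation $(\psi\phi)^*\lambda=\phi^*(\psi^*\lambda)=(f_{(\psi;\lambda)}\circ\phi)\,f_{(\phi;\lambda)}\,\lambda$ followed by taking logarithms is exactly the standard argument, and you correctly identify the order-reversal $(\psi\circ\phi)^*=\phi^*\circ\psi^*$ as the one point where care is needed. The paper itself gives no proof --- it states the lemma as ``well-known'' --- so your write-up simply supplies the argument the authors leave implicit, and it matches what they intend.
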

An iterative application of this formula gives rise to
\be\label{eq:g-iteration}
g_{(\psi \phi^N; \lambda)}=g_{(\psi; \lambda)}\circ \phi^N +\sum_{i=0}^{N-1} g_{(\phi; \lambda)}\circ \phi^i.
\ee

An immediate corollary of of the estimate of the growth rate of the iteration.
\begin{cor} Let $\| \cdot \|_\infty$ the $L_\infty$-norm of real-valued function. Then
 all $N \geq 1$, we have 
$$
\frac1N \|g_{(\psi^N;\lambda)}\|_\infty  \leq  \|g_{(\psi;\lambda)}\|_\infty < \infty.
$$
\end{cor}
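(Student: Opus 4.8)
The plan is to reduce the statement to the cocycle identity \eqref{eq:iteration} specialized to $\phi = \psi$, and then to combine the triangle inequality with the diffeomorphism-invariance of the sup-norm. The whole content lies in the additivity of the conformal exponent under composition, so almost no analytic work is required.

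First I would specialize \eqref{eq:g-iteration} (equivalently, iterate \eqref{eq:iteration}) to the self-iteration case $\phi = \psi$. A straightforward induction on $N$ then yields the additive cocycle expression
$$
g_{(\psi^N;\lambda)} = \sum_{i=0}^{N-1} g_{(\psi;\lambda)}\circ \psi^i ,
$$
with base case $N=1$ trivial and inductive step
$$
g_{(\psi^{N+1};\lambda)} = g_{(\psi \cdot \psi^N;\lambda)} = g_{(\psi;\lambda)}\circ \psi^N + g_{(\psi^N;\lambda)}
$$
coming directly from \eqref{eq:iteration}.

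Next I would estimate the $L^\infty$-norm of this sum. By the triangle inequality,
$$
\|g_{(\psi^N;\lambda)}\|_\infty \leq \sum_{i=0}^{N-1} \|g_{(\psi;\lambda)}\circ \psi^i\|_\infty .
$$
Since each $\psi^i$ is a diffeomorphism of $M$, precomposition with $\psi^i$ preserves the supremum, so that $\|g_{(\psi;\lambda)}\circ \psi^i\|_\infty = \|g_{(\psi;\lambda)}\|_\infty$ for every $i$. Hence the right-hand side equals $N\,\|g_{(\psi;\lambda)}\|_\infty$, and dividing by $N$ gives the claimed bound. The finiteness $\|g_{(\psi;\lambda)}\|_\infty < \infty$ is immediate, as $g_{(\psi;\lambda)}$ is a smooth (hence continuous) function on the compact manifold $M$.

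I expect no genuine obstacle here; the estimate is purely a consequence of the subadditivity built into the cocycle property. The only point worth flagging is that the reparametrization invariance of the sup-norm uses that $\psi^i$ maps $M$ \emph{onto} $M$, so that taking the supremum over $\psi^i(M) = M$ recovers the full supremum of $g_{(\psi;\lambda)}$; this is exactly where compactness (or at least surjectivity of the diffeomorphism) enters.
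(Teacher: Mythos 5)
Your proposal is correct and follows essentially the same route as the paper: the paper also obtains the identity $g_{(\psi^N;\lambda)} = \sum_{i=0}^{N-1} g_{(\psi;\lambda)}\circ \psi^i$ (by specializing \eqref{eq:g-iteration} with the outer map equal to the identity, which is the same as your induction on \eqref{eq:iteration}), then applies the triangle inequality and the invariance of the sup-norm under precomposition with a diffeomorphism. Your explicit remarks on surjectivity and compactness are slightly more careful than the paper's one-line proof but add nothing different in substance.
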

\begin{proof} This is an immediate consequence of \eqref{eq:g-iteration} with $\psi = \id$
$$
\frac1N g_{(\phi^N; \lambda)}= \frac 1N\sum_{i=0}^{N-1}g_{(\phi; \lambda)}\circ {\phi}^i.
$$
Therefore 
$$
\frac1N \|g_{(\phi^N; \lambda)} \|_\infty \leq \frac1N \sum_{i=0}^N \|g_{(\phi; \lambda)}\circ {\phi}^i\|_\infty
=\frac1N \sum_{i=0}^{N-1} \|g_{(\phi; \lambda)}\|_\infty = \|g_{(\phi; \lambda)}\|_\infty.
$$
\end{proof}

In addition, the following property of conformal exponent function is observed in our preprint
\cite{oh:contacton-Legendrian-bdy} and shows that the cohomology class of 
group cocycle of $\text{Cont}_+(M, \xi)$ does not depend on the choice of contact forms but depend 
only on the contact structure $\xi$ i.e. for any $\lambda, \lambda' \in \EC^+(M,\xi)$ and $\psi \in \text{Cont}_+(M, \xi)$ there exsits $h \in C^\infty (M, \R)$ such that $g_{(\psi; \lambda')}= h \circ \psi + g_{(\psi; \lambda)}$.
\begin{prop}[Remark 2.5  \cite{oh:contacton-Legendrian-bdy}]\label{prop:cohomologous}
 Let $(M,\xi)$ be a  coorinetable contact manifold. The map $\varphi_\lambda : \text{\rm Cont}_+(M, \xi) \to {\rm C}^\infty(M, \R),$ given by $\psi \mapsto g_{(\psi ; \lambda)}$, 
defines a group cocyle. And $\varphi_\lambda$ and $\varphi_{\lambda'}$ are cohomologous to each other for any $\lambda, \lambda' \in \EC^+(M, \xi)$.
\end{prop}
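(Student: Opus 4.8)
The plan is to treat both assertions within the standard framework of group cohomology, where the group $G = \Cont_+(M,\xi)$ acts on the additive module $A = C^\infty(M,\R)$ on the right by precomposition, $h \cdot \phi := h \circ \phi$. With this convention an inhomogeneous $1$-cocycle is a map $c : G \to A$ satisfying $c(\psi\phi) = c(\psi)\cdot\phi + c(\phi) = c(\psi)\circ\phi + c(\phi)$, and a $1$-coboundary is a map of the form $\psi \mapsto b\circ\psi - b$ for a fixed $b \in A$. The first assertion is then immediate: the identity \eqref{eq:iteration}, already established in the preceding lemma, reads exactly $g_{(\psi\phi;\lambda)} = g_{(\psi;\lambda)}\circ\phi + g_{(\phi;\lambda)}$, which is precisely the $1$-cocycle condition for $\varphi_\lambda(\psi) = g_{(\psi;\lambda)}$. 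So nothing further is needed for the cocycle property beyond recording the correct side of the action.

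For the second assertion I would produce an explicit coboundary realizing $\varphi_{\lambda'} - \varphi_\lambda$. Since $\lambda$ and $\lambda'$ are both positive contact forms for the \emph{same} cooriented contact structure $\xi$, their pointwise ratio is a globally defined smooth positive function, so I may write $\lambda' = e^{u}\lambda$ for a unique $u \in C^\infty(M,\R)$. The key step is to compute $\psi^*\lambda'$ in two ways. On one hand,
\[
\psi^*\lambda' = \psi^*(e^{u}\lambda) = (e^{u}\circ\psi)\,\psi^*\lambda = e^{u\circ\psi}\, f_{(\psi;\lambda)}\,\lambda,
\]
while on the other hand $\psi^*\lambda' = f_{(\psi;\lambda')}\lambda' = f_{(\psi;\lambda')}\,e^{u}\lambda$. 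Comparing the two and taking logarithms yields
\[
g_{(\psi;\lambda')} = (u\circ\psi - u) + g_{(\psi;\lambda)},
\]
so that $\varphi_{\lambda'}(\psi) - \varphi_{\lambda}(\psi) = u\circ\psi - u = (\delta u)(\psi)$ is precisely the coboundary of $u$. This exhibits $\varphi_\lambda$ and $\varphi_{\lambda'}$ as cohomologous, with the cohomology class depending only on $\xi$ and not on the chosen contact form.

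There is essentially no analytic obstacle here; the content is conceptual bookkeeping. The only points that require a moment of care are (i) ensuring that $u = \log(\lambda'/\lambda)$ is a genuine element of $C^\infty(M,\R)$, which is exactly where coorientation-compatibility and positivity of $\lambda,\lambda'$ enter, since they guarantee the ratio is a smooth \emph{positive} function on all of $M$ and hence has a global logarithm; and (ii) fixing once and for all the side of the action and the sign convention in the coboundary operator, so that the identity \eqref{eq:iteration} and the computed difference $u\circ\psi - u$ are manifestly a cocycle and a coboundary for the \emph{same} module structure. I would state these conventions at the very beginning of the proof to remove any ambiguity in signs, after which both claims reduce to the two short displayed computations above.
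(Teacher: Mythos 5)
Your proof is correct. Note that the paper itself contains no proof of this proposition---it is imported from Remark 2.5 of \cite{oh:contacton-Legendrian-bdy}---so the only in-paper ingredient is the iteration formula \eqref{eq:iteration}, which is precisely what you invoke for the cocycle property, and your coboundary computation (writing $\lambda' = e^{u}\lambda$ and deducing $g_{(\psi;\lambda')} = u\circ\psi - u + g_{(\psi;\lambda)}$) is the standard argument that the cited remark relies on. Incidentally, your computation also corrects a slip in the paper's surrounding text, which asserts the existence of $h \in C^\infty(M,\R)$ with $g_{(\psi;\lambda')} = h\circ\psi + g_{(\psi;\lambda)}$, omitting the $-h$ term; as written that identity would force $h = 0$ (take $\psi = \id$) and hence $\varphi_{\lambda'} = \varphi_{\lambda}$, which is false in general, whereas your version $g_{(\psi;\lambda')} = h\circ\psi - h + g_{(\psi;\lambda)}$ is the correct statement of cohomologousness.
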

These motivate us to name the conformal exponent the \emph{thermodynamic potential}
of the pair $(\psi, \lambda)$ or of $\psi$ with respect to $\lambda$. 

Motivated by its iteration  property \eqref{eq:iteration} of conformal exponent $g_{(\psi;\lambda)}$ 
its relationship  \eqref{eq:gpsiH=intRlambda} with the dissipativity of contact Hamiltonian flow $\psi_H^t$, 
we now rename $g_{(\psi;\lambda)}$ the \emph{thermodynamic potential} of the pair $(\psi, \lambda)$ 
or of $\psi$  with respect to $\lambda$.

\subsection{Topological entropy and topological pressure}

As often the case in Hamiltonian dynamics, it is natural to consider the whole Hamiltonian flow
instead of considering an isolated diffeomorphism. In this regard, we consider a smooth function
$H$ and its contact Hamiltonian flow.
Then following integrated formula of $g_{(\psi_H^1;\lambda)}$  is well-known to the experts,
which provides a link between the dissipativity and the conformal exponent 
or the  thermodynamic potential $g_{(\psi; \lambda)}$.  
\be\label{eq:gpsiH=intRlambda}
g_{(\psi_H^t ; \lambda)}(x)=\int_0^t -R_{\lambda}[H](\psi_H^u(x))\,du.
\ee
The dissipative property of contact dynamics generated by
 a Hamiltonian $H$ is encoded by the Lie derivative of the volume form $\mu_\lambda$
 in  the direction of the contact Hamiltonian flow $\psi_H^t$. The following 
 formula
 \be
\CL_{X_H^\lambda}\mu_{\lambda}=-(n+1)R_{\lambda}[H]\, \mu_\lambda
\ee
provides the precise rate of change of the $\mu_\lambda$-measure.

We now link the contact thermodynamic potential function
$g_{(\psi;\lambda)}$ to  the well-known  concept of the \emph{ topological pressure 
with the \emph{temperature} $T = \frac1\beta$} as follows.
(See e.g., \cite{ruelle-book}, \cite{bowen,bowen-book}, \cite{bowen-ruelle} for further expounding
thereof.) One important notion entering in this general definition is the \emph{$N$-th partition function}.

\begin{defn}[The $N$-th partition function]
Let $\psi \in \text{Cont}_+(M, \xi)$. For each integer $N \geq 0$ and $\varepsilon>0$, 
a finite subset $E$ of $M$ is $(N, \varepsilon)$-separated if 
$$ 
x,\, y \in E \, , \, x \neq y \Longrightarrow d(\psi^k(x), \psi^k (y)) > \varepsilon \; \text{for some } 
0 \leq k \leq N.
$$
Then for each given $\beta \in \R$, we define
\begin{small}
\beastar
\CZ_{(\psi;\lambda); N} (\beta,\epsilon) &=&
\text{sup }\left\{\sum_{x\in E} \exp\left({-\beta (n+1)g_{(\psi;\lambda)}(x)}\right) \,\Big|\, E \text{ is } (N,\varepsilon)\text{-separated} \right\}, \label{eq:CZ}\\
\CZ_N(\beta, \epsilon) & =  & \text{sup }\left\{\sum_{x\in E} \exp\left(-\beta (n+1)S_N(g_{(\psi;\lambda)}(x) ) \right) \,
\Big|\, E \text{ is } (N,\varepsilon)\text{-separated} \right\} \label{eq:ZN}
\eeastar
\end{small}
where we have
\be\label{eq:SN}
S_N (g_{(\psi;\lambda)})  : = \sum_{k=1}^N {g_{(\psi;\lambda)}}\circ \psi^{k-1}.
\ee
\end{defn}

What is so special about the potential $g_{(\psi;\lambda)}$ against the general choice of 
a potential function $g: M \to \R$ is the identity 
$$
S_N(g_{(\psi;\lambda)} )= g_{(\psi^N;\lambda)}
$$
arising from \eqref{eq:g-iteration}, which in turn leads to 
\be\label{eq:ZN=ZpsiN}
\CZ_N = \CZ_{(\psi^N;\lambda); N}.
\ee
\emph{This identity is a special 
property unique of the potential $g_{(\psi;\lambda)}$, not shared by other general
potential function $g$ in the literature of the thermodynamic formalism of
general dynamical systems.}

Using this partition function, we introduce  the following topological pressure 
associated to the contact thermodynamic potential $g_{(\psi;\lambda)}$.
\begin{defn}[Contact topological pressure]
Let $(\psi, \lambda)$ be a contact pair. We define
\beastar
P_{(\psi;\lambda)}^{\text{\rm top}}(\beta,\varepsilon)
&=&\limsup_{N \to \infty} \frac1N \log \CZ_N(\beta ,\varepsilon)\\[5pt]
P_{(\psi;\lambda)}^{\text{\rm top}} (\beta)
&=& \lim_{\varepsilon \to 0} P_{(\psi;\lambda)}^{\text{\rm top}}(\beta,\varepsilon).
\eeastar   
We call $P_{(\psi;\lambda)}^{\text{\rm top}} (\beta)$ the \emph{contact topological pressure}
of $(\psi, \lambda)$ of temperature $1/\beta$.
\end{defn}

\subsection{Contact thermodynamic potential and variational principle}

In this section,  we define the measure theoretic version of the \textit{pressure} associated to 
$g_{(\psi;\lambda)}$ as a \emph{potential} the notion of which
 is standard in the  thermodynamic formalism of general dynamical system. 
(Again see (See e.g., \cite{ruelle-book}, \cite{bowen,bowen-book}, \cite{bowen-ruelle}
for detailed explanations in the context of general dynamical systems.)
  
The following theorem is one of the indication of the correctness of our choice of $g_{(\psi;\lambda)}$.

\begin{thm}[\cite{do-oh:formalism}] The contact equlibrium state does not depend on the choice of
contact forms but depends only on the contact strcuture $\xi$.
\end{thm}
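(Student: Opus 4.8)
The plan is to reduce the statement to the elementary principle that cohomologous potentials share the same equilibrium states. Recall that by the variational principle an equilibrium state for the contact thermodynamic potential at inverse temperature $\beta$ is a $\psi$-invariant Borel probability measure $\mu$ realizing the supremum
$$
P_{(\psi;\lambda)}^{\text{\rm top}}(\beta) = \sup_{\mu}\left( h_\mu(\psi) - \beta(n+1)\int_M g_{(\psi;\lambda)}\, d\mu \right),
$$
the supremum ranging over $\psi$-invariant probability measures and $h_\mu(\psi)$ denoting the measure-theoretic (Kolmogorov--Sinai) entropy. Thus the entire statement will follow once I show that the affine functional $\mu \mapsto h_\mu(\psi) - \beta(n+1)\int_M g_{(\psi;\lambda)}\, d\mu$ on the space of invariant measures does not depend on the choice of $\lambda \in \mathfrak C^+(M,\xi)$.

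First I would invoke Proposition \ref{prop:cohomologous}. For two contact forms $\lambda, \lambda' \in \mathfrak C^+(M,\xi)$ sharing the kernel $\xi$, write $\lambda' = e^{\phi}\lambda$ with $\phi \in C^\infty(M,\R)$ depending only on the pair $(\lambda,\lambda')$, not on $\psi$. A direct computation of conformal factors, $\psi^*\lambda' = e^{\phi\circ\psi}\psi^*\lambda = e^{\phi\circ\psi + g_{(\psi;\lambda)}}\lambda = e^{g_{(\psi;\lambda)} + \phi\circ\psi - \phi}\lambda'$, yields the coboundary identity
$$
g_{(\psi;\lambda')} = g_{(\psi;\lambda)} + \phi\circ\psi - \phi,
$$
which is exactly the content of Proposition \ref{prop:cohomologous}: the two cocycles $\varphi_\lambda$ and $\varphi_{\lambda'}$ differ by the coboundary $\delta\phi$.

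Next I would exploit the defining invariance of $\mu$. Since $\mu$ is $\psi$-invariant, $\int_M \phi\circ\psi\, d\mu = \int_M \phi\, d\mu$, so the coboundary integrates to zero:
$$
\int_M (\phi\circ\psi - \phi)\, d\mu = 0.
$$
Consequently $\int_M g_{(\psi;\lambda')}\, d\mu = \int_M g_{(\psi;\lambda)}\, d\mu$ for every $\psi$-invariant $\mu$, so the free-energy functional displayed above is literally the same function of $\mu$ whether it is built from $\lambda$ or from $\lambda'$. In particular both its supremum, namely the pressure $P_{(\psi;\lambda)}^{\text{\rm top}}(\beta)$, and its set of maximizers, namely the equilibrium states, are unchanged; the former yields as a bonus the $\lambda$-independence of the contact topological pressure. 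Because any two contact forms for $\xi$ are related in this way, the equilibrium state depends only on $\xi$.

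The main obstacle I anticipate is not this cohomological reduction but the analytic justification of the variational principle in the present generality. For a general coorientation-preserving contactomorphism $\psi$, the existence and uniqueness of the equilibrium state, and the very validity of the variational principle, require structural hypotheses on the dynamics (for instance expansiveness together with specification, or hyperbolicity) and a regularity condition on the potential $g_{(\psi;\lambda)}$ (a H\"older or Walters-type condition) that guarantee a well-posed thermodynamic formalism. I would therefore isolate these hypotheses explicitly and check that the class of coboundaries arising from a change of contact form preserves them; this is automatic here, since $\phi\circ\psi - \phi$ is as regular as $\phi$ and $\psi$, so the equilibrium state remains well-defined after the change of form. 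The invariance step itself is robust and uses nothing beyond $\psi$-invariance of the competing measures.
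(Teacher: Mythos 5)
Your proof is correct and follows essentially the same route as the paper, which derives the theorem in one line from Proposition \ref{prop:cohomologous} and the definition of equilibrium state: your coboundary identity $g_{(\psi;\lambda')} = g_{(\psi;\lambda)} + \phi\circ\psi - \phi$ combined with the $\psi$-invariance of the competing measures, which makes the coboundary integrate to zero and hence leaves the variational functional (and thus its maximizers) unchanged, is exactly how that one-line argument unwinds. Your closing caveat about well-posedness of the variational principle is sensible but not needed for the invariance claim itself, which the paper likewise treats as formal.
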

This follows from  Proposition \ref{prop:cohomologous} and the definition of equilibrium state.

\begin{defn}[Variational pressure and equilibrium state] 
For each contact pair  $(\psi, \lambda)$, we denote by $\CM(\psi)$ the set of $\psi$-invariant
probability measures on $M$. Then we
define the $(\psi, \lambda)$-pressure function
\be\label{eq:pressure}
P_{(\psi;\lambda)} (\beta) := \sup_{\nu \in \CM(\psi)} 
\left\{h_\nu(\psi) - \beta \, (n+1) \int_M g_{(\psi;\lambda)} d\nu\right\},
\ee
where $h_\nu (\psi)$ is the measure theoretic entropy of an $\psi$-invariant probability measure $\nu$.\\
We call a $\psi$-invariant probability measure a \emph{contact equilibrium state}
(or \emph{contact equilibrium measure}), if it assumes the pressure \eqref{eq:pressure}.
\end{defn}
For each fixed measure $\nu$, the function
$$
\beta \mapsto h_{\nu}(\psi) - \beta \, (n+1) \int g_{(\psi;\lambda)}\, d\nu
$$
is a straight line with slope $-(n+1)\int g_{(\psi;\lambda)}\,d\nu$ and 
abscissa $h_{\nu}(\psi)$.
 Then the pressure function $P_{(\psi;\lambda)}(\beta)$ is the envelope of all
these straight lines. From this, it immediately follows that the function 

$$
\beta \mapsto P_{(\psi;\lambda)} (\beta)
$$
is a continuous convex function.

It follows from \cite[Section 3]{bowen-ruelle}  that
\be\label{eq:Pms=Ptop}
 P_{(\psi;\lambda)}(\beta) = P_{(\psi;\lambda)}^{\text{\rm top}}(\beta).
\ee
We note that as $\beta \to 0$ the potential energy $(n+1)\int_M g_{(\psi;\lambda)}\,d\nu$ plays no role, 
and we are just maximizing the measure theoretic entropy. As  $\beta \to \infty$, the potential energy becomes more and more important. 

Furthermore $P_{(\psi;\lambda)}(\beta)$ is always finite because
$$
\left|\int_M g_{(\psi; \lambda)}\,d\nu \right|\leq \text{sup}_{x \in M} |g_{(\psi; \lambda)}(x)| < \infty \; \text{for }\nu \in \CM(\psi)
$$ 
and
$$
\text{sup}_{\nu \in \CM(\psi)} \{ h_{\nu}(\psi)\} < \infty:
$$
This finiteness is a consequence of \cite[Corollary 13]{bowen}.

\begin{ques} Does there exist an equilibrium state for some positive temperature?
If not all for contact pair $(\psi, \lambda)$, what is a sufficient condition of $(\psi, \lambda)$
for the existence of equilibrium states?
\end{ques}

Now we can define the contact adaptation of  the  \textit{Gibbs measure}. 
 Equip $M$ with a metric $d$ (e.g., the distance induced by the triad metric of
 a contact triad $(M, \lambda,J)$).
 For convenience of exposition, we define, for each given integer $N \geq 0$, 
 $$
 d_{(\psi,N)}(x,y):=\text{sup}_{0 \leq k \leq N } \left\{d\left (\psi^k(x), \psi^k(y)\right )\right\},\;
 x,y \in M,
 $$ 
and a $d_{(\psi,N)}$-closed ball with radius $\varepsilon$ ($>0$)
$$
B_{d_{(\psi,N)}}(x, \varepsilon):=\{y \in M \mid d_{(\psi,N)}(x, y)\leq \varepsilon \}, \; x\in M.
$$
Note that $d_{(\psi,N)}$ also induces the topology of $M$. 
 
With these definitions, we are now ready to give the following definition.
\begin{defn}[Gibbs measure of $(\psi;\lambda)$] 
Let $(\psi, \lambda)$ be a contact pair on $M$. A $\psi$-invariant Borel probability measure $\mu$ on $M$ is called a 
\textit{Gibbs measure for $\psi$ at temperature $1/\beta$} if  there exists constants $\varepsilon >0$, $C> 0$ 
and $P \in \R$ such that for  $d_{(\psi,N)}$-closed balls  $B_{(N,\varepsilon)}$ of $M$
with radius $\varepsilon$,  and all $x \in B_{(N,\varepsilon)}$, we have
\be\label{eq:defining-Gibbs}
\frac1C \leq \frac{\mu(B_{(N, \varepsilon)})}{\exp\{- \beta\,(n+1) g_{(\psi^N;\lambda)}(x)-N P\}} 
 \leq C.
\ee
for any $N\geq 0.$
\end{defn}

\def\cprime{$'$}
\providecommand{\bysame}{\leavevmode\hbox to3em{\hrulefill}\thinspace}
\providecommand{\MR}{\relax\ifhmode\unskip\space\fi MR }
\providecommand{\MRhref}[2]{%
  \href{http://www.ams.org/mathscinet-getitem?mr=#1}{#2}
}
\providecommand{\href}[2]{#2}

\end{document}